\title{On the Ambrosetti-Malchiodi-Ni Conjecture for general submanifolds}
\author{Fethi Mahmoudi }
\address{\noindent Fethi Mahmoudi - Departamento de Ingenier\'{\i}a Matem\'atica and
CMM, Universidad de Chile, Casilla 170 Correo 3, Santiago,
Chile.}\email{fmahmoudi@dim.uchile.cl}
\author{Felipe Subiabre S\'{a}nchez}
\address{\noindent Felipe Subiabre S\'{a}nchez - Departamento de Ingenier\'{\i}a  Matem\'atica and CMM, Universidad de Chile, Casilla 170 Correo 3,
Santiago, Chile}
\email{fsstol@gmail.com}
\author{Wei Yao }
\address{\noindent Wei Yao - Departamento de Ingenier\'{\i}a Matem\'atica and
CMM, Universidad de Chile, Casilla 170 Correo 3, Santiago,
Chile.}\email{wyao.cn@gmail.com}
\date{}
\newtheorem{theorem}{Theorem}[section]
\newtheorem{proposition}{Proposition}[section]
\newtheorem{lemma}{Lemma}[section]
\newtheorem{definition}{Definition}[section]
\newtheorem{remark}{Remark}[section]
\newcommand{\D}{\Delta}
\newcommand{\R}{\mathbb{R}}
\newcommand{\N}{\mathbb{N}}
\newcommand{\e}{\varepsilon}
\newcommand{\del}{\partial}
\newcommand{\ov}{\bar}
\newcommand{\G}{\Gamma}
\numberwithin{equation}{section}
\begin{document}

\maketitle
\begin{center}{\bf Abstract}\end{center}
We study  positive solutions of the following semilinear equation $$\e^2\Delta_{\bar g} u - V(z) u+
u^{p} =0\,\hbox{ on }\,M, $$ where $(M, \bar g )$ is a compact smooth $n$-dimensional Riemannian manifold without boundary or the Euclidean space $\R^n$,  $\e$ is a small positive  parameter, $p>1$ and $V$ is a uniformly positive smooth potential. Given $k=1,\dots,n-1$, and $1 < p < \frac{n+2-k}{n-2-k}$. Assuming that $K$ is  a
$k$-dimensional smooth, embedded compact submanifold of $M$, which is stationary and
non-degenerate with respect to the functional $\int_K V^{\frac{p+1}{p-1}-\frac{n-k}{2}}dvol$, we prove the existence of a sequence $\e=\e_j\to 0$ and  positive solutions $u_\e$ that concentrate along 
$K$. This result proves in particular the validity of a conjecture by Ambrosetti-Malchiodi-Ni \cite{amn1}, extending a recent result  by Wang-Wei-Yang \cite{wwy}, where the one co-dimensional case has been considered. Furthermore, our approach explores a connection between solutions of the nonlinear Schr\"{o}dinger equation and $f$-minimal submanifolds in manifolds with density.

\

\noindent {\bf Keywords}: 
Nonlinear Schr\"odinger equation; Concentration phenomena; Infinite dimensional reduction; Manifolds with density.  

\

\noindent {\bf  AMS subject classification}: 35J25; 35J20; 35B33; 35B40.
\tableofcontents

\section{Introduction and  main results}

In this paper we study concentration phenomena for positive solutions of the nonlinear elliptic problem
\begin{equation}\label{eq:pe}
-\e^2 \D_{\bar{g}} u + V(z) u=|u|^{p-1}u\     \hbox{ on } M,
\end{equation}
where $M$ is an $n$-dimensional compact Riemannian manifold without boundary (or the flat Euclidean space $\R^n$), $\D_{\bar{g}}$ stands for the
Laplace-Beltrami operator on $(M,\bar {g})$, $V$ is a smooth positive function on $M$
satisfying
\begin{equation}\label{cond-V}
    0 < V_1 \leq V(z) \leq V_2,\quad\text{for all}\ z\in M\ \text{and for some constants}\ V_1,V_2,
\end{equation}
$u$ is a real-valued function, $\e > 0$ is a small parameter and $p$
is an exponent greater than one.

The above semilinear elliptic problem arises from the standing waves for the nonlinear Schr\"{o}dinger equation on $M$, see \cite{amn1,dkw} and some references therein  for more details. 
An interesting case is the {\em semiclassical limit} $\e \to
0$. For results in this direction, when $M=\R$ and $p=3$, Floer-Weinstein \cite{FW}  first proved the existence of solutions highly concentrated near critical points of $V$. Later on this result was extended by Oh \cite{Oh} to
$\R^n$ with $1<p<\frac{n+2}{n-2}$. More precisely, the profile of these solutions is given by the {\it ground state} $U_{V(x_0)}$  of the limit equation
\begin{equation}\label{eq:vx0}
-\Delta u+V(x_0)u-u^p=0\ \hbox{ in }\  \R^n,
\end{equation}
where $x_0$ is the concentration point. That is, the solutions obtained in \cite{FW} and \cite{Oh} behave qualitatively like
$$u_\e(x)\sim
U_{V(x_0)}(\frac{x-x_0}{\e}), \quad  \hbox{as $\e$ tends to zero}.
$$
Since $U_{V(x_0)}$ decays exponentially to $0$ at infinity, $u_\e$ vanishes rapidly away from $x_0$. In other words, in the semiclassical limit, solutions constructed in \cite{FW,Oh} concentrate at  points and they are always called {\em peak solutions} or spike solutions. In recent years, these existence results have been generalized in different directions, including: multiple peaks solutions, degenerate potentials, potentials tending to zero at infinity and for more general nonlinearities. {\it An important and interesting question is whether solutions exhibiting concentration on higher dimensional sets exist.}

Only recently it has been proven the existence of
solutions concentrating at higher dimensional sets, like curves or
spheres. In all these results (except for \cite{dap}), the
profile is given by (real) solutions to \eqref{eq:vx0} which are
independent of some of the variables. If concentration occurs near a $k$-dimensional set,
then the profile in the directions orthogonal to the limit set (concentration set) will be given by a soliton in $\R^{n-k}$. For example, some first results  in the case of radial symmetry were obtained by Badiale-D'Aprile
\cite{BAD} and  Benci-D'Aprile \cite{BED}.  These results  were improved by  Ambrosetti-Malchiodi-Ni \cite{amn1}, where necessary
and sufficient conditions for the location of the concentration set have been given.
Unlike the point concentration case, the limit set is not stationary for the potential $V$ : in fact a solution concentrated near a sphere carries a {\it potential energy} due to $V$ and a {\it volume energy}. Define 
\begin{equation}
E(u)=\frac{\e^2}{2}\int_{M}|\nabla_{\bar g}u|^2+V(z)u^2-\frac{1}{p+1}\int_M|u|^{p+1}
\end{equation}
and let $K$ be a $k$-dimensional submanifold of $M$ and $U_K$ be a proper approximate solution concentrated along $K$, see \eqref{glob-aprox} below. One has
\[
E(U_K)\sim\e^{n-k}\int_K V^{\theta_k} dvol, \qquad \hbox{with }\qquad \theta_k=\frac{p+1}{p-1}-\frac{1}{2}(n-k).
\]

Based on the above energy considerations, Ambrosetti-Malchiodi-Ni \cite{amn1} conjectured that
{\it concentration on $k$-dimensional sets for $k =1,\cdots,n-1$ is
expected}  under suitable non-degeneracy assumptions and the limit set $K$ should satisfy
\begin{equation}
\theta_k \nabla^N V=V{\bf H},
\end{equation}
where $\nabla^N$ is the normal gradient to $K$ and ${\bf H}$ is the mean-curvature vector on $K$. In particular, they suspected that concentration occurs in general along sequences $\e_j\rightarrow0$.

\

By developing an infinite dimensional version of the Lyapunov-Schmidt reduction method, del Pino-Kowalczyk-Wei \cite{dkw} successfully proved the validity of the above conjecture for $n= 2$ and $k =1$. Actually they proved that: given a non-degenerate stationary curve $K$ in $\R^2$ (for the weighted length functional
$\int_K V^{\frac{p+1}{p-1}-\frac{1}{2}}$), suppose that $\e$ is sufficiently small and satisfies the following {\it gap condition}:
\[
|\e^2\ell^2-\mu_0|\ge c\,\e,\quad \forall\,\ell \in\mathbb N,
\]
where $\mu_0$ is a fixed positive constant, then problem  \eqref{eq:pe} has a positive solution $u_\e$ which concentrates on $K$, in the sense that it is exponentially small away from $K$. After some time Mahmoudi-Malchiodi-Montenegro in \cite{mmm-CPAM} constructed a different type of solutions. Indeed, they studied complex-valued solutions whose phase is highly oscillatory carrying a quantum mechanical momentum along the limit curve. In particular they
established the validity of the above conjecture for the case $n\ge 2$ arbitrary and $k=1$. Recently, by applying the method developed in \cite{dkw}, Wang-Wei-Yang \cite{wwy} considered the one-codimensional case $n\ge 3$ and $k=n-1$ in the flat Euclidean space $\mathbb{R}^n$. The main purpose of this paper is to prove the validity of the above conjecture for all $k=1,\dots,n-1$. 

\

To prove the validity of the Ambrosetti-Malchiodi-Ni conjecture for all cases, one possible way is to  generalize the method developed in \cite{dkw} and \cite{wwy}. For this purpose, we first recall the key steps in \cite{dkw} and \cite{wwy}. According to our knowledge, the first key step is the construction of proper approximate solutions, and the second key step is to develop an infinite dimensional Lyapunov-Schmidt reduction method so that the original problem can be reduced to a simpler one that we can handle easily. Actually this kind of infinite dimensional reduction argument has been used in many constructions in PDE and geometric analysis. It has been developed by many authors working on this subject or on closely related problems, see for example \cite{dkw,dmm, GMP, mmp,mm1} and references therein. 

\

Let us now go back to our problem. To construct proper approximate solutions for general submanifolds, we first  expand the Laplace-Betrami operator for arbitrary submanifolds, see Proposition~\ref{expansion-g}. Then by an iterative scheme of Picard«s type, a family of very accurate approximate solutions can be obtained, see Section 3. Next we develop  an infinite dimensional reduction such that the construction of positive solutions of problem \eqref{eq:pe} can be reduced to the solvability of a reduced system~\eqref{reduce system-final}. For more details about the setting-up of the problem, we refer the reader to Subsection 4.1. It is slightly different from the arguments in \cite{dkw} and \cite{wwy}. Finally, by noticing the recent development on manifolds with density in differential geometry (cf. e.g. \cite{Liu-2012,morgan}), our method explores a connection between solutions of the nonlinear Schr\"{o}dinger equation and $f$-minimal submanifolds in Riemannian  manifolds with density. 

\

We are  now in position to state our main result. 

\begin{theorem}\label{th:main}
Let $M$ be a compact $n$-dimensional Riemannian manifold (or the  Euclidean space $\R^n$) and let $V : M \to \R$
be a smooth positive function satisfying \eqref{cond-V}. Given $k=1,\dots,n-1$, and $1 < p < \frac{n+2-k}{n-2-k}$. Suppose that $K$ be a stationary non-degenerate smooth compact submanifold in $M$ for the weighted functional
\begin{equation*}
\int_K V^{\frac{p+1}{p-1}-\frac{n-k}{2}} dvol,
\end{equation*}
then there is a sequence $\e_j \to 0$ such that problem \eqref{eq:pe}
possesses positive solutions $u_{\e_j}$ which  concentrate near $K$. Moreover, for some constants $C$, $c_0>0$, the solutions $u_{\e_j}$ satisfies globally
\begin{equation*}
|u_{\e_j}(z)|\leq C\exp\big(-c_0\,\text{dist}(z,K)\big/\e_j\big).
\end{equation*}

\end{theorem}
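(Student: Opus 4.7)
The plan is to carry out the infinite-dimensional Lyapunov--Schmidt reduction scheme of \cite{dkw,wwy}, adapting it to a $k$-dimensional submanifold of arbitrary codimension $n-k$. I would first introduce Fermi coordinates $(y,x)$ in a tubular neighborhood of $K$, with $y\in K$ tangential and $x\in\R^{n-k}$ normal, and rescale $x\mapsto\e x$; by Proposition~\ref{expansion-g} the operator $\e^2\Delta_{\bar g}$ then expands in powers of $\e$ around the flat Laplacian on the normal fibre. The natural leading-order profile is the radial ground state $U_{V(y)}(x)$ of \eqref{eq:vx0} on $\R^{n-k}$, which is well-defined, non-degenerate modulo translations, and exponentially decaying thanks to the subcritical assumption $1<p<\frac{n+2-k}{n-2-k}$. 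A Picard iteration in $\e$, with corrections depending on the second fundamental form of $K$, the connection on the normal bundle $NK$, and derivatives of $V$ along $K$, would then yield a global approximation $U_K$ solving \eqref{eq:pe} up to an error of arbitrarily high polynomial order in $\e$ in an exponentially weighted norm; the pointwise exponential decay in $\mathrm{dist}(z,K)/\e_j$ claimed in the theorem is already built into the ansatz at this stage.

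\textbf{Reduction and reduced problem.} Writing $u=U_K+\phi$ together with a normal section $\Phi\in\Gamma(NK)$ encoding a small deformation of the concentration set, I would require $\phi$ to be orthogonal, fibrewise, to the approximate kernel $\{\partial_{x_i}U_{V(y)}\}_{i=1}^{n-k}$ of the linearization, and invert the constrained linearized problem. This step combines the invertibility of the model operator $-\Delta+V(y)-p\,U_{V(y)}^{p-1}$ on $\R^{n-k}$ (orthogonally to its kernel) with a tangential spectral decomposition along $K$, producing a unique $\phi=\phi(\Phi;\e)$ depending smoothly on $\Phi$. Testing the equation against the kernel elements then yields the reduced system \eqref{reduce system-final} on $K$, whose principal part is, to leading order in $\e$, the Euler--Lagrange equation for the weighted volume $\int_K V^{\theta_k}\,dvol$, namely $\theta_k\,\nabla^N V=V\,\mathbf{H}$; this is precisely the $f$-minimality condition for the density $V^{\theta_k}$ in the sense of manifolds with density. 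Stationarity of $K$ makes this leading term vanish identically, and non-degeneracy means that the associated weighted Jacobi operator $\mathcal{J}_K$ on sections of $NK$ is invertible, so the reduced system can be recast as a fixed-point equation $\mathcal{J}_K\Phi=\mathcal{R}(\Phi,\e)$ with a higher-order remainder and solved by a contraction argument.

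\textbf{Main obstacle: resonance.} The truly delicate point is that the constrained linear operator used to define $\phi(\Phi;\e)$ carries a family of very small eigenvalues coming from tangential oscillations of frequency $\sim \e^{-1}$ along $K$: their Weyl-type asymptotics $\sim \e^2\ell^2$, $\ell\in\N$, can cross a fixed positive critical value, producing a genuine resonance at isolated values of $\e$. As in \cite{dkw,mmm-CPAM,wwy}, this obstruction forces one to give up solvability for every small $\e$ and instead to select a sequence $\e_j\to 0$ satisfying a gap condition of the form $|\e_j^2\ell^2-\mu_0|\ge c\,\e_j$ for every $\ell\in\N$, whose existence along a full-measure set of scales follows from a measure-theoretic/Weyl-counting argument. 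Quantifying the precise loss of invertibility allowed along such $\e_j$, and matching it against the polynomially small errors coming from the approximate solution and from the nonlinear terms in the reduced system, is the core technical difficulty of the proof; once this balancing is achieved the contraction mapping argument closing the reduction is essentially routine, if technically heavy.
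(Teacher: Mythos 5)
Your outline follows the same architecture as the paper — Fermi coordinates, Picard iteration to build a high-order approximate solution, infinite-dimensional Lyapunov--Schmidt reduction with a normal-section parameter $\Phi$, Jacobi operator from the second variation of $\int_K V^{\sigma}$, and a gap condition of Weyl type selecting $\e_j\to 0$. But there is a structural gap in the linear step that the paper resolves by enlarging the ansatz, and that your write-up does not address.

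You propose to require $\phi$ orthogonal, fibre by fibre, only to the translation modes $\{\partial_{x_i}U_{V(y)}\}_{i=1}^{n-k}$, and then ``invert the constrained linearized problem \dots producing a unique $\phi=\phi(\Phi;\e)$.'' This inversion fails to be uniform in $\e$: the model operator $L_0=-\Delta+1-pw_0^{p-1}$ on $\R^{n-k}$ has Morse index one, with a negative eigenvalue $\lambda_0<0$ and positive eigenfunction $Z$ (Proposition~\ref{prop-w0}). Orthogonality to the translation modes alone does not remove $Z$, so after coupling with $\mu^{-2}\Delta_{K_\e}$ the constrained operator carries a sequence of eigenvalues of the form $\e^2\lambda_\ell(K)+\lambda_0\mu^2$ that repeatedly cross zero as $\e\to 0$. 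You do recognize this as ``the main obstacle,'' but your proposed remedy — a gap condition plus ``quantifying the precise loss of invertibility'' of the already-defined map $\phi(\Phi;\e)$ — is circular, because the map $\phi(\Phi;\e)$ is not yet defined at resonant scales. The paper's way out is different and essential: it augments the ansatz with an extra free function $e(\e y)$ multiplying $Z$, namely $v=w_0+\sum \e^\ell w_\ell + \e\, e\, Z$, and projects the linearized equation onto both $\partial_j w_0$ \emph{and} $Z$. This makes the fibrewise-constrained operator $L_\e^*$ uniformly coercive (Lemma~\ref{lemma-injectivity} together with \eqref{inequality-orth}), independently of any gap condition, and isolates the resonance into a separate scalar reduced equation $\mathcal{K}_\e[e]=-\e^2\Delta_K e+\lambda_0\mu^2 e=\cdots$, whose invertibility is exactly where the sequence $\e_j$ and the loss $\e^{-3k}$ enter (Proposition~\ref{prop-gap}). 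Without this additional mode, the fixed-point argument does not close. A secondary, more cosmetic omission: the paper also uses a gluing decomposition $\phi=\eta_{3\delta}^\e\phi^\sharp+\phi^\flat$ with a strongly coercive far-field operator $L_\e^\flat$, and a hierarchy $\Phi=\Phi_0+\e\Phi_1+\cdots+\e^{I-1}\Phi_{I-1}$ with only the top slice $\Phi_{I-1}$ left free; these make the contraction estimates in Lemma~\ref{lem4.5} work cleanly, and you should at least flag them.
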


\begin{remark}
The assumptions on $K$ are  related to the existence of non-degenerate compact minimal submanifold in manifolds $M$ with density $V^{\frac{p+1}{p-1}-\frac{n-k}{2}}dvol$. In fact writing $V^{\frac{p+1}{p-1}-\frac{n-k}{2}}=e^{-f}$,
then $K$ is called $f$-minimal submanifold in differential geometry (cf. \cite{Liu-2012}).

\end{remark}

\begin{remark}
Actually we can prove that the same result holds true under a gap condition on $\e$, which  is due to a resonance phenomena. Similar conditions can be found in \cite{dkw,wwy} and some references therein.
\end{remark}

Before closing this introduction, we notice that problem \eqref{eq:pe} is similar to the following singular perturbation problem
  \begin{equation}\label{spp}
  \begin{cases}
    -\e^2 \D u + u = u^p & \text{ in } \Omega, \\
    \frac{\partial u}{\partial \nu} = 0 & \text{ on } \partial \Omega,
    \\ u > 0 & \text{ in } \Omega.
  \end{cases}
\end{equation}
This latter problem arises in the study of some biological models
 and as \eqref{eq:pe} it exhibits concentration of
solutions at some points of $\overline{\Omega}$. Since this equation is homogeneous, then  the location of concentration points is determined by the geometry of the domain. On the other hand, it has been proven that solutions exhibiting concentration on higher dimensional sets exist. For results in this direction we refer the reader to \cite{dmm,mm1,mm,mal,malm,malm2,wy}. 

\

In general, these results can be divided into two types: The first one is the case where the concentration set lies totally on the boundary. 
The second one is where the concentration set is inside the domain and which intersect the boundary transversally. For this second type of solutions we refer the reader to  Wei-Yang \cite{wy}, who proved the existence of layer on the line intersecting with the
boundary of a two-dimensional domain orthogonally. See also Ao-Musso-Wei \cite{amw}, where triple junction solutions have been constructed. In the over-mentioned two results, \cite{amw} and \cite{wy}, only the one dimensional concentration case has been considered. We believe the method developed here  to the above problem \eqref{spp} can be used to handle the higher dimensional situation, namely concentration at  arbitrary dimensional submanifolds which intersect the boundary transversally. Interestingly, our preliminary result shows that our method explores a connection between solutions of problem \eqref{spp} and minimal submanifolds with free boundary in geometric analysis. 

It is worth pointing out that \cite{wy} applied an infinite dimensional reduction method while \cite{amw} used a finite dimensional one. We also suggest the interested readers to the paper \cite{dwy} for an intermediate reduction method which can be interpreted as an intermediate procedure between the finite and the infinite dimensional ones. Moreover, it is interesting to consider Open Question 4 in \cite{dwy}, which can be seen as the Ambrosetti-Malchiodi-Ni Conjecture without the small parameter $\e$.

\

The paper is organized as follows.
In Section 2 we introduce the Fermi coordinates in a tubular neighborhood of $K$ in $M$ and we expand the Laplace-Beltrami operator in these Fermi coordinates. In Section 3, a family of very accurate approximate solutions is constructed. Section 4 will be devoted to develop an infinite dimensional Lyapunov-Schmidt reduction and to prove Theorem \ref{th:main}.




%
%

\section{Geometric background}

In this section we will give some geometric background. In particular, we will introduce the so-called Fermi coordinates which play important role in the higher dimensional concentrations. Before doing this, we first introduce the auxiliary weighted functional corresponding to problem \eqref{eq:pe}.

\subsection{The auxiliary weighted functional}

Let $K$ be a $k$-dimensional closed (embedded or immersed) submanifold of $M^n$, $1\le k\le n-1$. Let $\{K_t\}_t$ be a smooth one-parameter family of  submanifolds such that $K_0=K$. We  define
\begin{equation}
\mathcal{E}(t)=\int_{K_t}V^{\sigma} dvol,
\qquad \text{ with }\quad \  \sigma=\frac{p+1}{p-1}-\frac{n-k}{2}.
\end{equation}
Denote $\nabla^T$ and $\nabla^N$ to be connections projected to the
tangential and normal spaces on  $K$. We give the following definitions on $K$ which  appeared in Theorem~\ref{th:main}. 

\begin{definition} [Stationary condition]
A submanifold $K$ is said to be stationary relative to the functional $\int_K V^{\sigma} dvol$ if
\begin{align}
\sigma\nabla^NV=-VH\ \text{on}\ K,
\end{align}
where $H$ is the mean curvature vector on $K$, i.e., $H_j=-\Gamma_{aj}^a$ (here the minus sign depends on the orientation, and $\Gamma_a^b$ are the 1-forms on the normal bundle of $K$ (see \eqref{eq:Gab} below for the definition).
\end{definition}

\begin{definition} [Nondegeneracy (ND) condition]
We say that $K$ is non-degenerate if the quadratic form
\begin{align}
&\int_K\Bigg\{\Big\langle\Delta_K\Phi+\frac{\sigma}{V}\nabla_KV\cdot\nabla_K\Phi,\Phi\Big\rangle+\sigma^{-1}H(\Phi)^2-\frac{\sigma}{V}\big(\nabla^N\big)^2V\,[\Phi,\Phi]
-{\rm Ric}(\Phi,\Phi)\nonumber\\
&\qquad\qquad\qquad\qquad\qquad\qquad+\Gamma_{b}^a(\Phi)\Gamma_{a}^b(\Phi)\Bigg \}V^\sigma\sqrt{\det(g)}\, dvol
\end{align}
defined on the normal bundle to $K$,
%
is non-degenerate.
\end{definition}

\begin{remark}
Here and in the rest of this paper, Einstein summation convention is used, that is, summation over repeated
indices is understood. 

\

If we set $V^\sigma=e^{-f}$, i.e., $f=-\sigma\ln V$, then our stationary and ND conditions are corresponding to the first and second variation formulas of $f$-minimal submanifold in \cite{Liu-2012}, i.e.,
\begin{equation*}
H=\nabla^Nf,
\end{equation*}
where $H=-\sum_{a}\nabla^N_{e_a}e_a$ is the mean curvature vector, $e_a$ ($1\leq a\leq k$) is an orthonormal frame in an open set of $K$. And at $t=0$, 
\begin{align*}
\frac{d^2}{dt^2}\bigg(  \int_{K_t}e^{-f}\bigg)
=&
\int_{K}e^{-f}\bigg(-\sum_{a=1}^kR_{avva}-\frac{1}{2}\Delta_K(|v|^2)+|\nabla_Kv|^2-2|A^v|^2-f_{vv}\\
&\qquad\qquad+\frac{1}{2}\langle\nabla^Tf,\nabla^T(|v|^2)\rangle\bigg),
\end{align*}
where $K_t$ is a smooth family of  submanifolds such that $K_0=K$, the variational normal vector field $v$ is compactly supported on $K_t$, and $A^v_{ab}=-\langle\nabla_{e_a}e_b,v\rangle$.
\end{remark}

%
%

\subsection{Fermi coordinates and expansion of the metric}\label{ss:fc}
Let $K$ be a $k$-dimensional submanifold of $(M,\ov g)$
($1\le k\le n-1$). Define $N=n-k$, we choose
along $K$ a local orthonormal frame field $\big((E_a)_{a=1,\cdots,
k},(E_i)_{i=1,\cdots, N}\big)$ which is oriented. At points of $K$, we have the natural splitting
$$TM=T K \oplus N K$$ where $T K$ is the
tangent space to $K$ and $N K$ represents the normal bundle, which
are spanned respectively by $(E_a)_a$ and $(E_i)_i$.

We denote by $\nabla$ the connection induced by the metric $\ov{g}$ and by
$\nabla^N$ the corresponding normal connection on the normal bundle.
Given $p \in K$, we use some geodesic coordinates $y$ centered at
$p$. We also assume that at $p$ the normal vectors $(E_i)_i$, $i =
1, \dots, N$, are transported parallely (with respect to $\nabla^N$)
through geodesics from $p$, so in particular
\begin{equation}\label{eq:parall}
    \ov g\left(\nabla_{E_a}E_j\,,E_i\right)=0  \ \hbox{ at } p,\quad \forall\, i,j = 1, \dots, N,\ a = 1, \dots, k.
\end{equation}
In a neighborhood of $p$ in $K$, we consider normal geodesic
coordinates
\[
f(\bar{y}) : = \exp^K_p (y_a E_a), \quad  \forall\,\bar{y} := (y_{1}, \ldots, y_{k}),
\]
where $\exp^K$ is the exponential map on $K$ and summation over repeated
indices is understood. This yields the coordinate vector fields
$X_a : = f_* (\del_{y_a})$. We extend the $E_i$ along each geodesic $\gamma_E(s)$ so that they are parallel
with respect to the induced connection on the normal bundle $NK$.
This yields an orthonormal frame field $X_i$ for $NK$ in a neighborhood of
$p$ in $K$ which satisfies
\[
\left. \nabla_{X_a} X_i \right|_p \in T_p K.
\]

A coordinate system in a neighborhood of $p$ in $M$ is now defined by
\begin{equation}\label{eqF}
F(\bar{y},\bar x) := \exp^{M}_{f(\bar y)}( x_i \, X_i), \quad
\forall\,(\bar y,\bar x) :=(y_{1}, \ldots, y_{k},x_1, \ldots, x_{N}),
\end{equation}
with corresponding coordinate vector fields
\[
X_i : = F_* (\del_{x_i}) \quad \mbox{and} \quad  X_a : = F_*
(\del_{y_a}).
\]


By our choice of coordinates, on $K$ the metric $\ov{g}$ splits in
the following way
\begin{equation}\label{eq:splitovg}
    \ov g(q) = \ov g_{ab}(q)\,d y_a\otimes d y_b+\ov
g_{ij}(q)\,dx_i\otimes dx_j, \quad \forall q \in K.
\end{equation}
We denote by $\Gamma_a^b(\cdot)$ the 1-forms defined on the normal
bundle, $NK$, of $K$ by the formula~
\begin{equation}\label{eq:Gab}
\ov g_{bc} \Gamma_{ai}^c:=  \ov g_{bc} \Gamma_a^c(X_i)=\ov g(\nabla_{X_a}X_b,X_i) \quad \hbox{at } q=f(\bar y).
\end{equation}
Notice that
\begin{equation}
\label{eq:min}
    K \hbox{ is minimal } 
    \quad \Longleftrightarrow \quad \sum_{a=1}^k\G^a_a(E_i)
    = 0 \quad \hbox{ for any } i = 1, \dots N.
\end{equation}

Define $q=f(\bar y)=F(\bar y,0)\in K$ and let $(\widetilde g_{ab}(y))$ be the induced metric on $K$. When we consider the metric coefficients in a neighborhood of $K$,
we obtain a deviation from formula \eqref{eq:splitovg}, which is
expressed by the next lemma. 
We will denote by $R_{\alpha\beta\gamma\delta}$ the components of the curvature tensor
with lowered indices, which are obtained by means of the usual ones
$R_{\beta\gamma\delta}^\sigma$ by~
\begin{equation}
\label{ctens} R_{\alpha\beta\gamma\delta}=\ov
g_{\alpha\sigma}\,R_{\beta\gamma\delta}^\sigma.\end{equation}


\begin{lemma} At the point $F(\bar y,\bar x)$, the following expansions hold,
for any $a=1,...,k$ and any $i,j=1,...,N$, where $N=n-k$, 
\begin{align*}
\bar g_{ij}&=\delta_{ij}+\frac{1}{3}\,R_{istj}\,\bar x_s\,\bar x_t\,
+\,{\mathcal O}(|\bar x|^3);\\[3mm]
\bar g_{aj}&=\frac23\widetilde{g}_{ab}R^b_{kjl}\bar x^k\bar x^l+
{\mathcal O}(|\bar x|^3);\\[3mm]
\bar g_{ab}&=\widetilde{g}_{ab}-\Big\{\widetilde{g}_{ac}\,\Gamma_{bi}^c+\widetilde{g}_{bc}\,\Gamma_{ai}^c\Big\}\,\bar x_i
+\Big[R_{sabl}+\widetilde g_{cd}\Gamma_{as}^c\,\Gamma_{bl}^d \Big]\bar x_s \bar x_l+\,{\mathcal O}(|\bar x|^3).
\end{align*}
Here $R_{istj}$  are  computed at the point of
$K$ parameterized by $(\bar y,0)$. \label{lemovg}
\end{lemma}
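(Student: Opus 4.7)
The plan is to exploit the fact that in Fermi coordinates the curves $s\mapsto F(\bar y,s\bar\xi)$, for unit $\bar\xi\in N_{f(\bar y)}K$, are geodesics of $M$ meeting $K$ orthogonally, and to extract Taylor expansions of $\bar g_{\alpha\beta}$ along these radial geodesics by Jacobi field analysis. This is the standard derivation of Riemann normal coordinates, adapted to the fact that the basepoint moves along $K$ and that the tangential frame $X_a$ and normal frame $X_i$ play different roles.

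For $\bar g_{ij}$, I would fix $\bar y$ and view $\bar x\mapsto F(\bar y,\bar x)$ as the normal exponential map at $f(\bar y)$. By construction $\bar g_{ij}(\bar y,0)=\delta_{ij}$, and $X_j$ along the radial geodesic $\gamma(s)=F(\bar y,s\bar\xi)$ can be identified with $1/s$ times the Jacobi field generated by the variation $\bar\xi\mapsto\bar\xi+t e_j$. Taylor expanding this Jacobi field through the equation $\nabla_{\partial_s}\nabla_{\partial_s}J=-R(J,\partial_s)\partial_s$, then pairing two such Jacobi fields and using the symmetries of the Riemann tensor, produces the classical $\tfrac13 R_{istj}\bar x_s\bar x_t$ correction.

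For the mixed component $\bar g_{aj}$, the value at $\bar x=0$ vanishes by the splitting \eqref{eq:splitovg}, and the first-order term in $\bar x$ also vanishes because $\partial_{x_i}\bar g(X_a,X_j)|_{\bar x=0}=\bar g(\nabla_{X_i}X_a,X_j)+\bar g(X_a,\nabla_{X_i}X_j)$, with both pieces vanishing on $K$ thanks to \eqref{eq:parall} and the torsion-free identity $\nabla_{X_i}X_a=\nabla_{X_a}X_i$. The quadratic correction arises by recognizing $X_a|_{\gamma(s)}$ as a Jacobi field along the radial geodesic (generated by varying the basepoint $\bar y\mapsto\bar y+t e_a$); expanding this Jacobi field and pairing it with $X_j$ yields the stated $\tfrac23\widetilde g_{ab}R^b_{kjl}$ term.

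For $\bar g_{ab}$, the zeroth-order term is the induced metric $\widetilde g_{ab}$. Differentiating once, using $\nabla_{X_i}X_a=\nabla_{X_a}X_i$ together with the fact that $\bar g(X_i,X_b)\equiv 0$ on $K$, gives $\bar g(\nabla_{X_a}X_i,X_b)=-\bar g(X_i,\nabla_{X_a}X_b)=-\widetilde g_{bc}\Gamma_{ai}^c$ via \eqref{eq:Gab}, which after symmetrization produces the advertised linear coefficient $-\{\widetilde g_{ac}\Gamma_{bi}^c+\widetilde g_{bc}\Gamma_{ai}^c\}$. The quadratic term has two contributions: a curvature piece $R_{sabl}$ coming again from Jacobi field analysis along the radial geodesic, and a shape-operator piece $\widetilde g_{cd}\Gamma_{as}^c\Gamma_{bl}^d$ arising when iterating covariant derivatives of $X_a$ in the normal direction. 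The main obstacle is the careful bookkeeping required to separate these two contributions and to verify that the cross terms symmetrize into precisely the advertised form; the $\calO(|\bar x|^3)$ remainders are then controlled by smoothness of $\bar g$ and compactness of $K$.
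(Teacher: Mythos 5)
The paper gives no proof here, deferring to \cite{dmm} and Proposition 2.1 of \cite{mmp}. Those references expand $\bar g_{\alpha\beta}$ directly, iterating $\partial_{x_s}\bar g_{\alpha\beta} = \bar g(\nabla_{X_s}X_\alpha, X_\beta) + \bar g(X_\alpha, \nabla_{X_s}X_\beta)$ and reading off the curvature tensor from commutators of covariant derivatives, whereas you organize the same Taylor computation through Jacobi fields along the radial normal geodesics. The two routes are equivalent. Yours is the more conceptual: every coefficient is traced to a coefficient in the Taylor expansion of solutions of $\nabla_{\partial_s}^2 J = -R(J,\partial_s)\partial_s$, and in particular the qualitative difference between the $\tfrac13$ in $\bar g_{ij}$ and the unit coefficient in $\bar g_{ab}$ becomes transparent --- in the first case one pairs two Jacobi fields that vanish at $s=0$, in the second two that do not, and the cross term $\langle \nabla_{\partial_s}X_a, \nabla_{\partial_s}X_b\rangle(0)$ is exactly what produces $\widetilde g_{cd}\Gamma_{as}^c\Gamma_{bl}^d$. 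The iterated-Christoffel approach is more mechanical but extends to higher orders with less structural overhead, which is why the references tend to prefer it.

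Two small points in your treatment of $\bar g_{aj}$. First, the two pieces of $\partial_{x_i}\bar g(X_a,X_j)|_K$ vanish for different reasons: torsion-freeness and \eqref{eq:parall} take care of $\bar g(\nabla_{X_i}X_a, X_j) = \bar g(\nabla_{X_a}X_i, X_j)$, but the vanishing of $\bar g(X_a, \nabla_{X_i}X_j)$ uses instead that $\nabla_{X_i}X_j|_K = 0$, which follows because the radial curves $s\mapsto F(\bar y, s\bar\xi)$ are geodesics (giving $\bar\xi^i\bar\xi^j\nabla_{X_i}X_j|_K = 0$) together with the symmetry $\nabla_{X_i}X_j = \nabla_{X_j}X_i$, after polarization. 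Second, \eqref{eq:parall} is posited only at the basepoint $p$; away from $p$ the normal frame is $\nabla^N$-parallel only along the $K$-geodesics emanating from $p$, so strictly speaking a skew-symmetric term $\bar g(\nabla^N_{X_a}X_i, X_j)\,\bar x_i$ survives in the linear part of $\bar g_{aj}$. This imprecision is inherited from the lemma's statement (and is standard in the Fermi-coordinate literature); it is not a defect of your argument, and the surviving term is absorbed into the error estimates in the subsequent scaled computations.
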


\begin{proof}
The proof is somewhat standard and is thus omitted, we refer to \cite{dmm} for details, see also Proposition 2.1 in \cite{mmp}.
\end{proof}


By the Whitney embedding theorem, $K\subset M\hookrightarrow\R^{2n}$. Thus we can define $K_\e:=K/\e$ and $M_\e:=M/\e$ in a natural way. On the other hand since  $F(\bar y,\bar x)$ is a Fermi coordinate system on $M$, then $F_\e(y,x):=F(\e y,\e x)/\e$ defines a Fermi coordinate system on $M/\e$. With this notation, here and in the sequel, by slight abuse of notation we denote $V(\e y,\e x)$ to actually mean $V(\e z)=V\big(F(\e y,\e x)\big)$ in the Fermi coordinate system. The same way is understood to its derivatives with respect to $y$ and $x$.

Now we can introduce our first parameter function $\Phi$ which is a normal vector field defined on $K$ and define $x=\xi+\Phi(\e y)$. Then $(y,\xi)$ is the Fermi coordinate system for the submanifold $K_\Phi$. Adjusting the parameter $\Phi$, later we will show that there are solutions concentrating on $K_\Phi$ for a subsequence of $\e$. 

We denote by $ g_{\alpha\beta}$ the metric coefficients in the new coordinates $(y,\xi)$. It follows that
$$
g_{\alpha\beta}=\sum\limits_{\gamma,\delta}\bar g_{\gamma\delta}\,\frac{\partial{z_\alpha}}{\partial \xi_\gamma}\,\frac{\partial{z_\beta}}{\partial \xi_\delta}.
$$
Which yields
$$
 g_{ij}=\bar g_{ij}|_{\xi+\Phi}, \qquad  g_{aj}=\bar g_{aj}|_{\xi+\Phi}+\e\,\partial_{\bar a}\Phi^l \bar g_{jl}|_{\xi+\Phi},
$$
and
$$
g_{ab}=\bar g_{ab}|_{\xi+\Phi}+\e\,\Big\{ \bar g_{aj}\,\partial_{\bar b} \Phi^j+\bar g_{bj}\,\partial_{\bar a} \Phi^j\Big\}|_{\xi+\Phi}+\e^2\,\partial_{\bar a} \Phi^i\,\partial_{\bar b} \Phi^j\,\bar g_{ij}|_{\xi+\Phi}
$$
where summations over repeated indices is understood.

To express the error terms, it is convenient to introduce some notations. For a positive integer $q$, we denote by $R_q(\xi)$, $R_q(\xi,\Phi)$, $R_q(\xi,\Phi,\nabla\Phi)$, and $R_q(\xi,\Phi,\nabla\Phi,\nabla^2\Phi)$ error terms such that the following bounds hold for some positive constants $C$ and $d$:
\begin{align*}
|R_q(\xi)|\leq C\varepsilon^q(1+|\xi|^d),
\end{align*}
\begin{align*}
&|R_q(\xi,\Phi)|\leq C\varepsilon^q(1+|\xi|^d),\\
&|R_q(\xi,\Phi)-R_q(\xi,\bar{\Phi})|\leq C\varepsilon^q(1+|\xi|^d)|\Phi-\bar{\Phi}|,
\end{align*}
\begin{align*}
&|R_q(\xi,\Phi,\nabla\Phi)|\leq C\varepsilon^q(1+|\xi|^d),\\
&|R_q(\xi,\Phi,\nabla\Phi)-R_q(\xi,\bar{\Phi},\nabla\bar{\Phi})|\leq C\varepsilon^q(1+|\xi|^d)\big(|\Phi-\bar{\Phi}|+|\nabla\Phi-\nabla\bar{\Phi}|\big),
\end{align*}
and
\begin{align*}
|R_q(\xi,\Phi,\nabla\Phi,\nabla^2\Phi)|\leq& C\varepsilon^q(1+|\xi|^d)+C\varepsilon^{q+1}(1+|\xi|^d)|\nabla^2\Phi|,
\end{align*}
\begin{align*}
&\big|R_q(\xi,\Phi,\nabla\Phi,\nabla^2\Phi)-R_q(\xi,\bar{\Phi},\nabla\bar{\Phi},\nabla^2\bar{\Phi})\big|\\
&\leq C\varepsilon^q(1+|\xi|^d)\big(|\Phi-\bar{\Phi}|+|\nabla\Phi-\nabla\bar{\Phi}|\big)\big(1+\varepsilon|\nabla^2\Phi|+\varepsilon|\nabla^2\bar{\Phi}|\big)\\
&\quad+C\varepsilon^{q+1}(1+|\xi|^d)|\nabla^2\Phi-\nabla^2\bar{\Phi}|.
\end{align*}

Using the expansion of the previous lemma, one can easily show that the following lemma holds true.

\begin{lemma}
In the coordinate $(y,\xi)$, the metric coefficients satisfy
\begin{align*}
{g}_{ab}&=\widetilde g_{ab}-\varepsilon\,\big\{\widetilde g_{bf}\Gamma_{ak}^f+\widetilde g_{af}\Gamma_{bk}^f\big\}\,(\xi^k+\Phi^k)+\varepsilon^2\big(R_{kabl}+\widetilde g_{cd}\,\Gamma_{ak}^c\Gamma_{bl}^d\big)(\xi^k+\Phi^k)(\xi^l+\Phi^l)\\[3mm]
&\quad+\varepsilon^2\partial_{\bar a}\Phi^j\partial_{\bar b}\Phi^j+R_3(\xi,\Phi,\nabla\Phi),\\
{g}_{aj}&=\varepsilon\partial_{\bar a}\Phi^j+\frac{2}{3}\varepsilon^2R_{kajl}(\xi^k+\Phi^k)(\xi^l+\Phi^l)+R_3(\xi,\Phi,\nabla\Phi),\\[3mm]
{g}_{ij}&=\delta_{ij}+\frac{1}{3}\varepsilon^2R_{kijl}(\xi^k+\Phi^k)(\xi^l+\Phi^l)+R_3(\xi,\Phi,\nabla\Phi).
\end{align*}
\end{lemma}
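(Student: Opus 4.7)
The plan is to derive all three formulas by direct substitution into the change-of-variables identities
\[
g_{ij} = \bar g_{ij}|_{\xi+\Phi}, \qquad g_{aj} = \bar g_{aj}|_{\xi+\Phi} + \e\,\partial_{\bar a}\Phi^l\,\bar g_{jl}|_{\xi+\Phi},
\]
\[
g_{ab} = \bar g_{ab}|_{\xi+\Phi} + \e\{\bar g_{aj}\partial_{\bar b}\Phi^j + \bar g_{bj}\partial_{\bar a}\Phi^j\}|_{\xi+\Phi} + \e^2\,\partial_{\bar a}\Phi^i\,\partial_{\bar b}\Phi^j\,\bar g_{ij}|_{\xi+\Phi},
\]
recorded just above the statement, using the asymptotics of Lemma~\ref{lemovg}. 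The crucial observation is that on the blown-up manifold $M_\e$ the Fermi coordinate entering Lemma~\ref{lemovg} is $\bar x = \e(\xi + \Phi(\e y))$, so each factor of $\bar x$ in that lemma produces both a factor of $\e$ and a polynomial in $\xi + \Phi$. In particular the $\mathcal{O}(|\bar x|^3)$ remainders become $\mathcal{O}(\e^3(1+|\xi|^d))$, which is precisely what is captured by the $R_3$ error classes defined above the statement.

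Starting with $g_{ij}$, the first formula of Lemma~\ref{lemovg} evaluated at $\bar x = \e(\xi + \Phi)$ gives the desired expansion directly, after using the Riemann-tensor symmetry $R_{istj} = R_{kijl}$ under the relabeling $s\to k$, $t\to l$. For $g_{aj}$, the term $\bar g_{aj}|_{\xi+\Phi}$ supplies $\tfrac{2}{3}\e^2\,\widetilde g_{ab} R^b_{kjl}(\xi^k+\Phi^k)(\xi^l+\Phi^l) + R_3(\xi,\Phi)$, which is converted to the announced form by the index-lowering identity~\eqref{ctens}. The remaining contribution $\e\,\partial_{\bar a}\Phi^l\,\bar g_{jl}|_{\xi+\Phi}$ has leading term $\e\,\partial_{\bar a}\Phi^j$, thanks to $\bar g_{jl}(0) = \delta_{jl}$, with an $\e\cdot\mathcal{O}(\e^2)$ correction depending linearly on $\nabla\Phi$, hence inside $R_3(\xi,\Phi,\nabla\Phi)$.

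For $g_{ab}$, the piece $\bar g_{ab}|_{\xi+\Phi}$ directly yields the terms $\widetilde g_{ab}$, then $-\e\{\widetilde g_{bf}\Gamma_{ak}^f + \widetilde g_{af}\Gamma_{bk}^f\}(\xi^k + \Phi^k)$, and $\e^2[R_{kabl} + \widetilde g_{cd}\Gamma_{ak}^c\Gamma_{bl}^d](\xi^k+\Phi^k)(\xi^l+\Phi^l)$ appearing in the statement. The mixed term $\e\{\bar g_{aj}\partial_{\bar b}\Phi^j + \bar g_{bj}\partial_{\bar a}\Phi^j\}|_{\xi+\Phi}$ is subleading: since $\bar g_{aj}$ vanishes at $\bar x = 0$ and its first nonzero contribution is $\mathcal{O}(\bar x^2) = \mathcal{O}(\e^2)$, the whole expression is $\mathcal{O}(\e^3)\cdot\nabla\Phi$ and enters $R_3(\xi,\Phi,\nabla\Phi)$. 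Finally $\e^2\,\partial_{\bar a}\Phi^i\,\partial_{\bar b}\Phi^j\,\bar g_{ij}|_{\xi+\Phi}$ produces $\e^2\partial_{\bar a}\Phi^j\partial_{\bar b}\Phi^j$ at leading order, plus a remainder of size $\e^4|\nabla\Phi|^2(1+|\xi|^d)$, again absorbed in $R_3$.

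The only real work is the bookkeeping. The most delicate check, which I would carry out last, is the Lipschitz dependence on $\Phi$ and $\nabla\Phi$ required by the definitions of the classes $R_q(\xi,\Phi)$ and $R_q(\xi,\Phi,\nabla\Phi)$: this reduces, via the mean value theorem applied to the tensor coefficients in Lemma~\ref{lemovg}, to smoothness of the curvature tensor and of $\bar g_{\alpha\beta}$ in the original Fermi coordinates, which is automatic. Since neither $\nabla^2\Phi$ nor any other higher derivative enters at this stage, the natural class for the remainders is $R_3(\xi,\Phi,\nabla\Phi)$, as claimed. I expect no analytic obstacle beyond this careful accounting of powers of $\e$ across the cross terms.
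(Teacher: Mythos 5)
Your proof is correct and follows exactly the route the paper intends (the paper simply asserts the lemma "follows easily" from Lemma~\ref{lemovg} and the change-of-variables formulas without writing out the substitution). You correctly identify the key point that $\bar x = \e(\xi+\Phi(\e y))$, converting each $\mathcal{O}(|\bar x|^3)$ remainder into an $R_3$-class error, and you correctly track which cross terms are subleading — in particular that $\bar g_{aj} = \mathcal{O}(\bar x^2)$ pushes the mixed $\partial_{\bar b}\Phi^j$ contribution to $g_{ab}$ into $R_3$, and that $\bar g_{jl}(0)=\delta_{jl}$ yields the $\e\partial_{\bar a}\Phi^j$ leading term in $g_{aj}$.
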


Denote the inverse metric of $({g}_{\alpha\beta})$ by $({g}^{\alpha\beta})$. Recall that, given the expansion of a matrix as $M=I+\varepsilon A+\varepsilon^2B+\mathcal{O}(\varepsilon^3)$, we have
\begin{align*}
M^{-1}=I-\varepsilon A-\varepsilon^2B+\varepsilon^2A^2+\mathcal{O}(\varepsilon^3).
\end{align*}

\begin{lemma}\label{lem:inverse-det}
In the coordinate $(y,\xi)$, the metric coefficients ${g}^{\alpha\beta}$ satisfy
\begin{align*}
{g}^{ab}&=\widetilde g^{ab}+\e\,\bigg\{\widetilde g^{cb}\,\Gamma_{ci}^a+\widetilde g^{ca}\,\Gamma_{ci}^b\bigg\}\,(\xi^i+\Phi^i)-\e^2\,\widetilde g^{cb}\,\widetilde g^{ad}\,R_{kcdl}\,(\xi^k+\Phi^k)(\xi^l+\Phi^l)\\
&\quad+\varepsilon^2\bigg(\widetilde g^{ac}\,\Gamma_{dk}^b\Gamma_{cl}^d+\widetilde g^{bc}\,\Gamma_{dk}^a \Gamma_{cl}^d
+\widetilde g^{cd}\,\Gamma_{dk}^a \Gamma_{cl}^b\bigg)\,(\xi^k+\Phi^k)(\xi^l+\Phi^l)+R_3(\xi,\Phi,\nabla\Phi),\\[3mm]
{g}^{aj}&=-\varepsilon\,\widetilde g^{ab}\,\partial_{\bar b}\Phi^j-\frac{2\,\e^2}{3}R_{kajl}(\xi^k+\Phi^k)(\xi^l+\Phi^l)+
\varepsilon^2\partial_{\bar b}\Phi^j\,\bigg\{\widetilde g^{bc}\,\Gamma_{ci}^a+\widetilde g^{ac}\,\Gamma_{ci}^b\bigg\}\,(\xi^i+\Phi^i)\\
&\quad+R_3(\xi,\Phi,\nabla\Phi),\\[3mm]
{g}^{ij}&=\delta_{ij}-\frac{\e^2}{3}\,R_{kijl}(\xi^k+\Phi^k)(\xi^l+\Phi^l)+\varepsilon^2\,\widetilde g^{ab}\,\partial_{\bar a}\Phi^i\partial_{\bar b}\Phi^j+R_3(\xi,\Phi,\nabla\Phi).
\end{align*}
Furthermore, we have the validity of the following expansion for the
log of the determinant of $g$:
\begin{align*}
 \log\big(\det g\big)& =  \log\big(\det \widetilde g\big)-2\e\,\Gamma^b_{bk}\,(\xi^k+\Phi^k)+  \frac13\,\e^2\, R_{mssl}\,(\xi^m+\Phi^m) \,(\xi^l+\Phi^l)\\
 &\quad+
\e^2\, \Big( \widetilde g^{ab}\,R_{mabl}-\Gamma_{am}^{c}
\Gamma_{cl}^{a}  \Big)\,(\xi^m+\Phi^m) \,(\xi^l+\Phi^l)+R_3(\xi,\Phi,\nabla\Phi).\\
\end{align*}
\end{lemma}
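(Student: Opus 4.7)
The plan is to derive both expansions mechanically from the expansion of $g_{\alpha\beta}$ stated in the previous lemma, together with the matrix identity
\[
(I+\varepsilon A+\varepsilon^2 B)^{-1}=I-\varepsilon A-\varepsilon^2 B+\varepsilon^2 A^2+\mathcal{O}(\varepsilon^3)
\]
already recalled in the text, applied to the appropriate block form of the metric. Concretely, I would write $g=G_0+\varepsilon G_1+\varepsilon^2 G_2+R_3(\xi,\Phi,\nabla\Phi)$, where $G_0=\mathrm{diag}(\widetilde g_{ab},\delta_{ij})$ and $G_1$, $G_2$ are read off from the previous lemma, then set $A:=G_0^{-1}G_1$, $B:=G_0^{-1}G_2$ so that
\[
g^{-1}=G_0^{-1}-\varepsilon G_0^{-1}G_1 G_0^{-1}+\varepsilon^2\bigl(G_0^{-1}G_1G_0^{-1}G_1G_0^{-1}-G_0^{-1}G_2G_0^{-1}\bigr)+R_3(\xi,\Phi,\nabla\Phi).
\]
Expanding block by block then produces the stated formulas; the error terms $R_3(\xi,\Phi,\nabla\Phi)$ are preserved under polynomial manipulations in view of the bounds listed in the excerpt, so only the first three orders in $\varepsilon$ have to be tracked.

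For the $(a,b)$ block I would compute $\widetilde g^{ac}\widetilde g^{bd}$ times the linear $\Gamma$-term of $g_{cd}$ to produce the $\varepsilon$-correction, and at order $\varepsilon^2$ keep three contributions: the curvature piece $-\widetilde g^{cb}\widetilde g^{ad}R_{kcdl}(\xi+\Phi)^k(\xi+\Phi)^l$ from $G_2$, the $\Gamma\Gamma$-piece from $G_2$, and the quadratic piece from the Neumann $A^2$ term which regenerates two further $\Gamma\Gamma$-contractions. The $\partial_{\bar a}\Phi^j\partial_{\bar b}\Phi^j$ term sitting inside $g_{ab}$ cancels exactly against the $g_{aj}g_{jb}$-type cross contribution in $A^2$, which is why it disappears in $g^{ab}$. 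The $(a,j)$ block is obtained similarly, noting that the leading $\varepsilon\partial_{\bar a}\Phi^j$ in $g_{aj}$ is raised by $-\widetilde g^{ab}$, and at order $\varepsilon^2$ one picks up the curvature term and a mixed $\partial\Phi\cdot\Gamma$ term from $A^2$. The $(i,j)$ block is the simplest: only the curvature piece and the $\partial_{\bar a}\Phi^i\partial_{\bar b}\Phi^j$-term (arising from the $A^2$ cross contribution) survive.

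For the log-determinant I would use
\[
\log\det g=\log\det G_0+\mathrm{tr}\log\bigl(I+\varepsilon G_0^{-1}G_1+\varepsilon^2 G_0^{-1}G_2+R_3\bigr),
\]
together with $\mathrm{tr}\log(I+X)=\mathrm{tr}(X)-\tfrac12\mathrm{tr}(X^2)+\mathcal{O}(X^3)$. The linear trace collapses via $\widetilde g^{ab}\widetilde g_{bc}=\delta^a_c$ to $-2\Gamma^b_{bk}(\xi^k+\Phi^k)$. At order $\varepsilon^2$, the term $\mathrm{tr}(G_0^{-1}G_2)$ contributes $\tfrac13 R_{mssl}(\xi^m+\Phi^m)(\xi^l+\Phi^l)$ from the normal block and $\widetilde g^{ab}R_{mabl}(\xi^m+\Phi^m)(\xi^l+\Phi^l)$ from the tangent block, plus a $\widetilde g^{ab}\Gamma\Gamma(\xi+\Phi)^2$ piece and a $\partial_{\bar a}\Phi^j\partial_{\bar b}\Phi^j$ piece. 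The cross term $-\tfrac12\mathrm{tr}((G_0^{-1}G_1)^2)$ cancels the $\partial\Phi$-squared contribution entirely and combines with the $\Gamma\Gamma$ contribution to produce the claimed coefficient $-\Gamma^c_{am}\Gamma^a_{cl}(\xi^m+\Phi^m)(\xi^l+\Phi^l)$.

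The main obstacle is purely bookkeeping: correctly identifying which $\varepsilon^2$ contributions come from $G_2$ directly and which come from the square $A^2$ (respectively from $\mathrm{tr}((G_0^{-1}G_1)^2)$), and making sure the $\partial_{\bar a}\Phi^j\partial_{\bar b}\Phi^j$ pieces cancel in the right blocks and survive with the correct sign in the $(i,j)$-block of $g^{\alpha\beta}$. Once the block structure is fixed and one uses $\widetilde g^{ab}\widetilde g_{bc}=\delta^a_c$ systematically, the identities reduce to algebra and the $R_3$-remainders absorb every higher-order contribution by the estimates stated just before the lemma.
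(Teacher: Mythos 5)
Your proposal is correct and takes essentially the same route as the paper: the metric inverse is obtained by the Neumann expansion $(I+\varepsilon A+\varepsilon^2 B)^{-1}=I-\varepsilon A-\varepsilon^2 B+\varepsilon^2 A^2+\mathcal{O}(\varepsilon^3)$ applied to $G_0^{-1}g$ (the identity is recalled in the paper immediately before the lemma), and $\log\det g$ is obtained by writing $g=G_0+M$ with $G_0=\mathrm{diag}(\widetilde g,I_{\mathbb R^N})$ and expanding $\mathrm{tr}\log(I+G_0^{-1}M)$ to second order, which is exactly the paper's formula $\log\det g=\log\det G_0+\mathrm{tr}(G_0^{-1}M)-\tfrac12\mathrm{tr}((G_0^{-1}M)^2)+\mathcal O(\|M\|^3)$. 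Your observations about which $\varepsilon^2$ pieces come from $G_2$ versus $A^2$, and in particular that the $\partial_{\bar a}\Phi^j\partial_{\bar b}\Phi^j$ contributions cancel in $g^{ab}$ and in $\log\det g$ but survive in $g^{ij}$, are correct and supply the bookkeeping detail that the paper's one-line proof leaves implicit.
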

\begin{proof}  The expansions of the  metric in the above lemma follow from Lemma \ref{lemovg}  while the expansion of the log of the determinant of $g$ follows from the fact that one can write  $g=G+M$ with
\[
G=\bigg(\begin{matrix}
  \widetilde g & 0 \\
  0& Id_{\R^N}
 \end{matrix}\bigg)
 \quad \hbox{and}\quad M =\mathcal{O}(\e),
\]
then we have the following expansion
\[
\log\big(\det g\big)=\log\big(\det G\big)+{\rm tr}(G^{-1}M)-\frac12 {\rm tr}\Big((G^{-1}M)^2\Big)+\mathcal{O}(\|M\|^3).
\]
and the lemma follows at once.
\end{proof}
\subsection{Expansion of the Laplace-Beltrami operator}
In terms the above  notations, we have the following expansion of the Laplace-Beltrami operator.
\begin{proposition}\label{expansion-g}
Let $u$ be a smooth function on $M_\e$. Then in the Fermi coordinate $(y,\xi)$, we have that
\begin{align*}
\Delta_{g}u&=\partial_{ii}^2u+\D_{K_\e}u-\varepsilon\,\Gamma_{bj}^b\partial_ju-2\varepsilon\,\widetilde g^{ab}\,\partial_{\bar b}\Phi^j\,\partial_{aj}^2u+2\,\e\,\widetilde g^{cb}\,\Gamma_{cs}^a\,(\xi^s+\Phi^s)\partial_{ab}^2u\\
&\quad+\varepsilon^2\,\nabla_K\Phi^i\cdot \nabla_K\Phi^j\,\partial_{ij}^2u-\frac{1}{3}\varepsilon^2R_{kijl}(\xi^k+\Phi^k)(\xi^l+\Phi^l)\partial_{ij}^2u-\e^2\,\Gamma^d_{dk}\,\partial_{\bar b}\Phi^k\,\widetilde g^{ab}\partial_au
\\
&\quad-\frac{4}{3}\varepsilon^2R_{kajl}(\xi^k+\Phi^k)(\xi^l+\Phi^l)\partial_{aj}^2u+2\varepsilon^2\partial_{\bar b}\Phi^j\,\bigg\{\widetilde g^{bc}\,\Gamma_{ci}^a+\widetilde g^{ac}\,\Gamma_{ci}^b\bigg\}\,(\xi^i+\Phi^i)\,\partial_{aj}^2u\\
&\quad+\e^2\,\bigg\{-\widetilde g^{cb}\,\widetilde g^{ad}\,R_{kcdl}+\widetilde g^{ac}\,\Gamma_{dk}^b\Gamma_{cl}^d+\widetilde g^{bc}\,\Gamma_{dk}^a \Gamma_{cl}^d+\widetilde g^{cd}\,\Gamma_{dk}^a \Gamma_{cl}^b\bigg\}\,(\xi^k+\Phi^k)(\xi^l+\Phi^l)
\,\partial_{ab}^2u\\
&\quad+\varepsilon^2\bigg(\widetilde g^{ab}\,R_{kabj}+\frac{2}{3}R_{kiij}-\Gamma_{ak}^c\Gamma_{cj}^a\bigg)(\xi^k+\Phi^k)\partial_ju
-\varepsilon^2\D_K\Phi^j\partial_ju\\
&\quad+2\varepsilon^3\partial_{\bar a\bar b}^2\Phi^j\Gamma_{ak}^b(\xi^k+\Phi^k)\partial_ju\\
&\quad-\e^2\,\bigg(\widetilde g^{ab}\,\partial_{\bar a}\Gamma_{dk}^d-\partial_{\bar a}\big\{\widetilde g^{cb}\Gamma_{ck}^a+\widetilde g^{ca}\Gamma_{ck}^b\big\}\bigg)\,(\xi^k+\Phi^k)\partial_bu-\frac{2}{3}\varepsilon^2R_{jajk}(\xi^k+\Phi^k)\partial_au\\
&\quad+2\e^2\,\bigg\{\widetilde g^{cb}\,\Gamma_{ci}^a+\widetilde g^{ca}\,\Gamma_{ci}^b\bigg\}\,\partial_{\bar b}\,\Phi^i\,\partial_au+\frac{1}{2}\,\e^2\,\partial_{\bar a}(\log\det \widetilde{g})\,
\big\{\widetilde g^{cb}\Gamma_{ci}^a+\widetilde g^{ca}\Gamma_{ci}^b\big\}(\xi^i+\Phi^i)\partial_bu\\
&\quad+R_3(\xi,\Phi,\nabla\Phi,\nabla^2\Phi)(\partial_ju+\partial_au)+R_3(\xi,\Phi,\nabla\Phi)(\partial_{ij}^2u+\partial_{aj}^2u+\partial_{ab}^2u).
\end{align*}
\end{proposition}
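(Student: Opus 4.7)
The plan is to start from the standard divergence form of the Laplace--Beltrami operator in local coordinates,
\[
\Delta_g u \;=\; g^{\alpha\beta}\,\partial^2_{\alpha\beta} u \;+\; \frac{1}{\sqrt{\det g}}\,\partial_\alpha\!\bigl(\sqrt{\det g}\;g^{\alpha\beta}\bigr)\,\partial_\beta u
\;=\; g^{\alpha\beta}\,\partial^2_{\alpha\beta}u \;+\; \Bigl(\partial_\alpha g^{\alpha\beta} \;+\; \tfrac12\,g^{\alpha\beta}\,\partial_\alpha\log\det g\Bigr)\partial_\beta u,
\]
and simply substitute the expansions of $g^{\alpha\beta}$ and $\log\det g$ furnished by Lemma~\ref{lem:inverse-det}. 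Since we work on the rescaled manifold $M_\e$ and coordinates along $K$ come from $\bar y=\e y$, tangential derivatives $\partial_{\bar a}$ carry an extra factor of $\e$ compared to $\partial_a$; we will track these factors throughout. The second-order and first-order contributions then organize naturally by the index type of the derivative hitting $u$ (purely normal $\partial^2_{ij}u$, mixed $\partial^2_{aj}u$, purely tangential $\partial^2_{ab}u$, and single derivatives $\partial_j u$, $\partial_a u$), and any residue of order $\e^3$ or higher, or involving second derivatives of $\Phi$ other than through $\Delta_K \Phi$, will be collected into the remainders $R_3(\xi,\Phi,\nabla\Phi)$ and $R_3(\xi,\Phi,\nabla\Phi,\nabla^2\Phi)$ whose bounds were fixed earlier.

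First, I process the second-order block $g^{\alpha\beta}\partial^2_{\alpha\beta}u$. The leading term $\delta_{ij}\partial^2_{ij}u$ produces $\partial_{ii}^2 u$; the tangential leading term $\widetilde g^{ab}\partial^2_{ab}u$, combined with the tangential contribution of the drift coming from $\tfrac12 g^{ab}\partial_a\log\det\widetilde g$ and with $\partial_{\bar a}\widetilde g^{ab}$, assembles the intrinsic Laplacian $\Delta_{K_\e}u$ on $(K_\e,\widetilde g)$. The $\e$-correction in $g^{aj}=-\e\widetilde g^{ab}\partial_{\bar b}\Phi^j+\dots$ yields the term $-2\e\widetilde g^{ab}\partial_{\bar b}\Phi^j\,\partial^2_{aj}u$, while the $\e^2$ Riemann correction in $g^{aj}$ produces $-\tfrac43\e^2 R_{kajl}(\xi^k+\Phi^k)(\xi^l+\Phi^l)\partial^2_{aj}u$ together with the mixed $\e^2$ piece containing $\partial_{\bar b}\Phi^j$ and $\Gamma_{ci}^a$. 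The $\e$-correction in $g^{ab}$ supplies $2\e\widetilde g^{cb}\Gamma_{cs}^a(\xi^s+\Phi^s)\partial^2_{ab}u$, and its $\e^2$ Riemann$+\Gamma\Gamma$ combination gives the long coefficient of $\partial^2_{ab}u$ in the proposition. Finally, the $\e^2$ correction in $g^{ij}$ yields the two terms $\e^2\nabla_K\Phi^i\cdot\nabla_K\Phi^j\,\partial^2_{ij}u$ and $-\tfrac13\e^2 R_{kijl}(\xi^k+\Phi^k)(\xi^l+\Phi^l)\partial^2_{ij}u$.

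Next, I compute the first-order drift. The $\e$-term of $\log\det g$, namely $-2\e\Gamma^b_{bk}(\xi^k+\Phi^k)$, is differentiated: in the normal direction, $\tfrac12\partial_j\log\det g\cdot g^{ji}$ gives $-\e\Gamma^b_{bj}\partial_j u$; in the tangential direction one obtains the $\e^2$ coefficient of $\partial_a u$ involving $\partial_{\bar a}\Gamma_{dk}^d$, paired with $\partial_{\bar a}g^{aj}$ and $\partial_{\bar a}g^{ab}$ which contribute the other $\partial_{\bar a}\{\widetilde g^{cb}\Gamma_{ck}^a+\widetilde g^{ca}\Gamma_{ck}^b\}$ piece (the tangential derivative of the $\Phi$-dependent $g^{aj}$ is what produces the key term $-\e^2\Delta_K\Phi^j\,\partial_j u$, along with the cross-correction $2\e^3\partial^2_{\bar a\bar b}\Phi^j\Gamma^b_{ak}(\xi^k+\Phi^k)\partial_j u$ coming from differentiating the $\xi+\Phi$ evaluation). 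The $\e^2$ part of $\log\det g$ then delivers the $(\xi+\Phi)$-linear coefficient of $\partial_j u$ involving $R_{kabj}$, $R_{kiij}$ and $\Gamma_{ak}^c\Gamma_{cj}^a$, while mixed products of $\partial g^{\alpha\beta}$ with the $\e$-piece of $\log\det g$ produce the remaining $\e^2$ coefficient of $\partial_a u$ with $\partial_{\bar a}(\log\det\widetilde g)$ and the coupling $2\e^2\{\widetilde g^{cb}\Gamma_{ci}^a+\widetilde g^{ca}\Gamma_{ci}^b\}\partial_{\bar b}\Phi^i\,\partial_a u$. All higher-order residues, as well as the $\e^{q+1}(1+|\xi|^d)|\nabla^2\Phi|$ pieces arising when a tangential derivative hits $\partial_{\bar a}\Phi$ in $g^{aj}$, fit the structural bounds of $R_3(\xi,\Phi,\nabla\Phi)$ and $R_3(\xi,\Phi,\nabla\Phi,\nabla^2\Phi)$.

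The main obstacle is not conceptual but bookkeeping: one must carefully track which terms are polynomially bounded in $\xi+\Phi$ (they survive), which are of order $\e^3$ or higher (they are absorbed), and which couple $\nabla^2\Phi$ (they must appear either through the explicit $-\e^2\Delta_K\Phi^j\partial_j u$ and $2\e^3\partial^2_{\bar a\bar b}\Phi^j\Gamma^b_{ak}(\xi^k+\Phi^k)\partial_j u$ terms, or through the $R_3$ remainder that allows a linear dependence on $\nabla^2\Phi$ with an extra $\e$). A useful consistency check, which I would perform at the end, is to set $\Phi\equiv 0$ and compare the resulting formula with the classical expansion of the Laplacian in Fermi coordinates near a submanifold, and to check that the coefficient of $\partial_j u$, contracted against a normal variation, reproduces the first variation of the weighted functional $\int_K V^{\sigma}\,dvol$; this latter verification will also be the mechanism by which the stationarity hypothesis of Theorem~\ref{th:main} enters the later reduction.
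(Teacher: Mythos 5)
Your proposal is correct and follows essentially the same route as the paper's Appendix: expand $\Delta_g u = g^{\alpha\beta}\partial^2_{\alpha\beta}u + (\partial_\alpha g^{\alpha\beta})\partial_\beta u + \tfrac12 g^{\alpha\beta}\partial_\alpha(\log\det g)\partial_\beta u$, insert the expansions of $g^{\alpha\beta}$ and $\log\det g$ from Lemma~\ref{lem:inverse-det}, track the $\e$-weights of $\partial_{\bar a}$ versus $\partial_a$, and recombine the tangential pieces into $\Delta_{K_\e}u$. The only addition beyond the paper's argument is your final consistency check (setting $\Phi\equiv0$, and matching the first variation of the weighted functional), which is a sensible sanity check but not needed for the proof.
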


%


\begin{remark}\label{remark2.2}
The proof of Proposition~\ref{expansion-g} will be postponed to the Appendix. It is worth mentioning that the coefficients of all the derivatives of $u$ in the above expansion are smooth bounded functions of the variable $\bar y=\e y$. The slow dependence of theses coefficients of $y$  is important in our construction of some proper approximate solutions. 
\end{remark}

\section{Construction of approximate solutions}

To prove Theorem~\ref{th:main}, the first key step in our method is to construct some proper approximate solutions. To achieve this goal, we have introduced some geometric background, especially the Fermi coordinates. The main objective of this section is to construct some very accurate local approximate solutions in a tubular neighbourhood of $K_\e$ by an iterative scheme of Picard's type and to define some proper global approximate solutions by the gluing method.

\subsection{Facts on the limit equation}

Recall that by the scaling, equation~\eqref{eq:pe} becomes
\begin{align}\label{eq-u}
\Delta_{g}u-V(\varepsilon z)u+u^p=0.
\end{align}
In the Fermi coordinate $(y,x)$, we can write $V(\e z)=V(\e y,\e x)$. Taking $x=\xi+\Phi(\e y)$, we have the following expansion of potential:
\begin{align}
V(\varepsilon y,\varepsilon x)=V(\e y,0)+\varepsilon\langle\nabla^NV(\e y,0),\xi+\Phi\rangle+\frac{\varepsilon^2}{2}(\nabla^N)^2V(\e y,0)[\xi+\Phi]^2+R_3(\xi,\Phi).
\end{align}
If the profile of solutions depends only on $\xi$ or varies slower on $y$, by the expansion of the Laplace-Beltrami operator in Proposition~\ref{expansion-g} and the above expansion of potential, the leading equation is
\begin{equation}\label{leadingeqn}
\sum_{i=1}^N\partial_{\xi_i\xi_i}^2u-V(\e y, 0)u+u^p=0.
\end{equation}

Define 
\begin{equation}
\mu(\e y)=V(\e y, 0)^{1/2},\quad
h(\e y)=V(\e y, 0)^{1/(p-1)},\quad\forall\,y\in K_\e.
\end{equation}
For the leading equation~\eqref{leadingeqn}, by the scaling
\[
u(y,\xi)=h(\e y)v\big(\mu(\e y) \xi\big)
=h(\e y)v(\bar \xi),
\]
the function $v$ satisfies
\begin{equation}\label{lim-equ}
\Delta_{\mathbb{R}^N}v-v+v^p=0.
\end{equation}
We call this equation {\em the limit equation}.

We now turn to the equation~\eqref{eq-u}, in the spirit of above argument, we look for a solution $u$ of the form
\begin{equation}\label{eq:uv}
u(y,\xi)=h(\e y)v\big(y,\bar\xi\big) 
\quad\text{with}\ \bar \xi=\mu(\e y)\xi\in\mathbb{R}^N.
\end{equation}
An easy computation shows that 
\begin{align*}
\partial_{a}u&= h\,\partial_a v
+\e(\partial_{\bar a} h)v+\e\,h\,\partial_{\bar a}\mu\,\xi^j\partial_jv,\\
\partial^2_{ij}u&= h\,\mu^2\,\partial^2_{ij}v,\\
\partial^2_{aj}u&= \e\,\Big( \mu \partial_{\bar a}h +h \partial_{\bar a}\mu\Big) \,\partial_{j}v+\,h\,\mu\,\partial^2_{a j}v+\e\,h\,\mu\,\xi^i\,\partial_{\bar a}\mu\,\partial^2_{i j}v,\\
\partial^2_{ab}u&= h\,\partial^2_{ab} v+\e\,\Big( \partial_{\bar b}h \,\partial_a v+\partial_{\bar a}h \,\partial_b v+h \partial_{\bar b}\mu\,\xi^j\partial^2_{aj}v+h \partial_{\bar a}\mu\,\xi^j\partial^2_{bj}v\Big)\\
&\quad+\e^2\Big(\partial_{\bar a}h\partial_{\bar b}\mu \xi^j \partial_j v+\partial_{\bar b}h\partial_{\bar a}\mu \xi^j \partial_j v+\partial_{\bar a\bar b}^2h v+h\partial_{\bar a}\mu\partial_{\bar b}\mu\xi^i\xi^j\partial_{ij}^2v
+h\partial_{\bar a\bar b}^2\mu \xi^j\partial_jv\Big),
\end{align*}
and 
\begin{align*}
\D_{K_\e}u&=\e^2 \Delta_{K}h\, v+h\,\D_{K_\e}v+2\e\,\nabla_{K} h\cdot \nabla_{K_\e} v
+\e^2\,\big(h\,\D_K \mu+2\,\nabla_K h\cdot \nabla_K\mu\big) \,\xi^j\,\partial_j v\nonumber\\
&\quad+\e^2\,h\, |\nabla_K\mu|^2\,\xi^j\xi^l \,\partial^2_{jl}v+2\e\,h\, \xi^j\,\nabla_K\mu\cdot (\nabla_{K_\e}\partial_{j}v).
\end{align*}

Therefore, we get the following expansion of the Laplace-Beltrami operator on $u$:
\begin{align*}
h^{-1}\mu^{-2}\,\Delta_{g}u
&=\Delta_{\mathbb{R}^N}v+\mu^{-2}\,\D_{K_\e}v+B(v),
\end{align*}
with
$
B(v)=B_1(v)+B_2(v).
$
Where $B_j$'s are respectively given by
\begin{align*}
B_1(v)
=&-\e\,\mu^{-1}\,\Gamma_{bj}^b\,\partial_jv
+\varepsilon^2\,\mu^{-1}\,\Big(\widetilde g^{ab}\,R_{kabj}+\frac{2}{3}R_{kiij}-\Gamma_{ak}^c\Gamma_{cj}^a\Big)(\frac1\mu\bar\xi^k+\Phi^k)\partial_jv
\\
&+\e^2 \,h^{-1}\,\mu^{-2}\,\Delta_{K}h\, v+2\e^2\,(h\,\mu^2)^{-1}\,\nabla_K h\cdot\Big( \frac{\bar \xi^j}{\mu}\,\nabla_K\mu -\mu\,\nabla_K\Phi^j\Big)\,\partial_jv\\
&+2\e\,\,h^{-1}\,\mu^{-2}\,\nabla_{K} h\cdot \nabla_{K_\e} v-\frac{1}{3}\e^2\,R_{kijl}(\frac1\mu\bar\xi^k+\Phi^k)(\frac1\mu\bar\xi^l+\Phi^l)\partial_{ij}^2v\\
&+\e^2\,\Big(\mu^{-2}\bar\xi^i\,\nabla_K\mu-\nabla_K\Phi^i  \Big)\,\Big( \mu^{-2}\bar\xi^j\,\nabla_K\mu-\nabla_K\Phi^j  \Big)\,\partial_{ij}^2v\\
&+\e^2\,\,\mu^{-2}\,\Big(\frac{\bar\xi^j}{\mu}\,\D_K \mu-2\,\nabla_K\mu\cdot\nabla_K\Phi^j-\mu\,\D_K\Phi^j\Big) \,\partial_j v\\
&+2\e\,\mu^{-2}\,\Big( \frac{\bar\xi^j}{\mu}\,\nabla_K\mu-\mu\,\,\nabla_K\Phi^j\Big)\cdot\nabla_{K_\e}\big(\partial_j v\big),
\end{align*}
and
\begin{align*}
&h\mu^2B_2(v)
=-\e^2\,h\,\Gamma^d_{dj}\,\nabla_K\Phi^j\,\cdot \nabla_{K_\e}v\\
&+2\,\e\,\widetilde g^{cb}\,\Gamma_{cs}^a\,\Big(\frac1\mu\,\bar\xi^s+\Phi^s\Big)\,\Big( h\,\partial^2_{ab} v+\e\,\Big\{ \partial_{\bar b}h \,\partial_a v+\partial_{\bar a}h \,\partial_b v+h \partial_{\bar b}\mu\,\frac{\bar \xi^j}{\mu}\partial^2_{aj}v+h \partial_{\bar a}\mu\,\frac{\bar \xi^j}{\mu}\,\partial^2_{bj}v\Big\}  \Big)\\
&-\frac{4}{3}\varepsilon^2\,h\,\mu\,R_{kajl}\Big(\frac1\mu\bar\xi^k+\Phi^k\Big)\Big(\frac1\mu\bar\xi^l+\Phi^l\Big)\partial_{aj}^2v+2\varepsilon^2\,h\,\mu\,\partial_b\Phi^j\,\Big\{\widetilde g^{bc}\,\Gamma_{ci}^a+\widetilde g^{ac}\,\Gamma_{ci}^b\Big\}\,\Big(\frac1\mu\bar\xi^i+\Phi^i\Big)\,\partial_{aj}^2v\\
&+\e^2\,h\,\Big\{-\widetilde g^{cb}\,\widetilde g^{ad}\,R_{kcdl}+2\widetilde g^{ac}\,\Gamma_{dk}^b\Gamma_{cl}^d+\widetilde g^{cd}\,\Gamma_{dk}^a \Gamma_{cl}^b\Big\}\,\Big(\frac1\mu\bar\xi^k+\Phi^k\Big)\Big(\frac1\mu\bar\xi^l+\Phi^l\Big)
\,\partial_{ab}^2v\\
&+2\varepsilon^3\,h\,\mu\,\partial_{\bar a\bar b}^2\Phi^j\Gamma_{ak}^b\Big(\frac1\mu\bar\xi^k+\Phi^k\Big)\partial_jv\\
&-\e^2\,h\,\Big(\widetilde g^{ab}\,\partial_{\bar a}\Gamma_{dk}^d-\partial_{\bar a}\Big\{\widetilde g^{cb}\Gamma_{ck}^a+\widetilde g^{ca}\Gamma_{ck}^b\Big\}\Big)\,\Big(\frac1\mu\bar\xi^k+\Phi^k\Big)\partial_bv
-\frac{2}{3}\varepsilon^2\,h\,R_{jajk}\Big(\frac1\mu\bar\xi^k+\Phi^k\Big)\partial_av\\
&+2\e^2\,h\,\Big\{\widetilde g^{cb}\,\Gamma_{ci}^a+\widetilde g^{ca}\,\Gamma_{ci}^b\Big\}\,\partial_{\bar b}\,\Phi^i\,\partial_av
+\frac{1}{2}\,\e^2\,h\,\partial_{\bar a}(\log\det \widetilde{g})\,
\Big\{\widetilde g^{cb}\Gamma_{ci}^a+\widetilde g^{ca}\Gamma_{ci}^b\Big\}\Big(\frac1\mu\bar\xi^i+\Phi^i\Big)\partial_bv\\
&+R_3(\xi,\Phi,\nabla\Phi,\nabla^2\Phi)\Big(\partial_jv+\partial_av\Big)+R_3(\xi,\Phi,\nabla\Phi)\Big(\partial_{ij}^2v+\partial_{aj}^2v+\partial_{ab}^2v\Big).\\
\end{align*}

Setting 
$$
S_\e(u)=-\Delta_{g}u+V(\e z)u-u^p,
$$
then by using the above expansion we can write
\begin{align*}
h^{-1}\mu^{-2}\,S_\e(u)
&=-\Delta_{\mathbb{R}^N}v-\mu^{-2}\,\D_{K_\e}v-B(v)+\mu^{-2}\,V(\e z)v-h^{p-1}\mu^{-2}\,v^p\\
&=-\Delta_{\mathbb{R}^N}v+v-v^p-\mu^{-2}\,\D_{K_\e}v+\mu^{-2}\,\Big(V(\e y,\e x)-V(\e y,0) \Big)\,v-B(v).
\end{align*}
Now using the following expansion of potential:
\begin{align*}
V(\varepsilon y,\varepsilon x)=V(\e y,0)+\varepsilon\langle\nabla^NV(\e y,0),\frac{\bar \xi}{\mu}+\Phi\rangle+\frac{\varepsilon^2}{2}(\nabla^N)^2V(\e y,0)[\frac{\bar \xi}{\mu}+\Phi]^2+R_3(\bar \xi,\Phi),
\end{align*}
we obtain
\begin{align}\label{equiv-u-v}
h^{-1}\mu^{-2}\,S_\e(u)=-\Delta_{\mathbb{R}^N}v+v-v^p-\mu^{-2}\,\D_{K_\e}v-\widetilde B(v)=:\widetilde S_\e(v),
\end{align}
where $\widetilde B(v)=\widetilde B_1(v)+\widetilde B_2(v)$ with
$$
\widetilde B_1(v)=B_1(v)-\mu^{-2}\,\Big(\e\langle\nabla^NV(\e y,0),\frac{\bar \xi}{\mu}+\Phi\rangle+\frac{\varepsilon^2}{2}(\nabla^N)^2V(\e y,0)[\frac{\bar \xi}{\mu}+\Phi]^2\Big)\,v
$$
and
$$
\widetilde B_2(v)= B_2(v)-R_3(\bar \xi,\Phi)\,v.
$$

At the end of this subsection, let us list some basic and useful properties of positive solutions of the limit equation~\eqref{lim-equ}. 
\begin{proposition}\label{prop-w0}
If $1<p<\infty$ for $N=2$ and $1<p<\frac{N+2}{N-2}$ for $N\geq3$, then every solution of problem:
\begin{align}
\left\{
\begin{array}{ll}
-\Delta_{\mathbb{R}^N} v+v-v^p=0\ \text{in}\ \mathbb{R}^N,
\vspace{1mm}\\
v>0\ \text{in}\ \mathbb{R}^N,\ v\in H^1(\mathbb{R}^N),
\end{array}\right.
\end{align}has the form $w_0(\cdot-Q)$ for some $Q\in\mathbb{R}^N$, where $w_0(x)=w_0(|x|)\in C^\infty(\mathbb{R}^N)$ is the unique positive radial solution which satisfies
\begin{equation}
\lim_{r\rightarrow\infty}r^{\frac{N-1}{2}}e^rw_0(r)=c_{N,p},\quad
\lim_{r\rightarrow\infty}\frac{w_0'(r)}{w_0(r)}=-1.
\end{equation}
Here $c_{N,p}$ is a positive constant depending only on $N$ and $p$. Furthermore, $w_0$ is non-degenerate in the sense that
\[
\text{Ker}\left(-\Delta_{\mathbb{R}^N}+1-pw_0^{p-1}\right)\cap L^\infty(\mathbb{R}^N)=\text{Span}\Big\{\partial_{x_1}w_0,\cdots,\partial_{x_N}w_0\Big\},
\]
and the Morse index of $w_0$ is one, that is, the linear operator 
$$L_0:=-\Delta_{\mathbb{R}^N}+1-pw_0^{p-1}$$ has only one negative eigenvalue $\lambda_0<0$, and the unique even and positive eigenfunction corresponding to $\lambda_0$ can be denoted by $Z$. 
\end{proposition}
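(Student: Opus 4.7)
The proof proposal is to assemble a sequence of classical results, each by now well-established in the literature on the semilinear equation $-\Delta v + v = v^p$ on $\mathbb{R}^N$, rather than attempt a self-contained argument from scratch. The four claims---existence of a positive finite-energy solution, radial symmetry about some point, uniqueness of the radial profile (together with its asymptotics), and non-degeneracy plus Morse index one---are by nature analytically independent, and each rests on a different technique, so I would handle them in turn.

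For existence I would use the mountain-pass theorem on the Nehari manifold associated with the energy $J(v) = \tfrac12\int|\nabla v|^2 + \tfrac12\int v^2 - \tfrac{1}{p+1}\int|v|_+^{p+1}$ on $H^1(\mathbb{R}^N)$; subcriticality $1<p<(N+2)/(N-2)$ and radial compactness (Strauss lemma / Berestycki--Lions) give a radial positive critical point. Positivity and smoothness come from the maximum principle and standard elliptic regularity bootstrap. For the symmetry statement about arbitrary positive $H^1$-solutions, I would invoke Gidas--Ni--Nirenberg's moving planes method in its asymptotic version of Li--Ni, which shows that any such solution is radial about some point $Q$ after exploiting the exponential decay that follows from $|v(x)|\to 0$ and comparison with the linear operator $-\Delta + 1$. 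The asymptotic rate $r^{(N-1)/2}e^r w_0(r)\to c_{N,p}$ and $w_0'/w_0\to -1$ are then obtained by writing the radial ODE $w_0'' + \tfrac{N-1}{r}w_0' - w_0 + w_0^p = 0$, comparing with the linear equation $w'' + \tfrac{N-1}{r}w' - w = 0$ whose decaying solution is the modified Bessel function $K_{(N-2)/2}(r)$, and using a shooting/ODE asymptotic analysis of the perturbative nonlinear term, which is exponentially smaller than the linear part.

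For uniqueness of the radial positive solution I would appeal to Kwong's theorem (extended by McLeod--Serrin and by Serrin--Tang to the present range of $p$), which proves that the ODE satisfied by $w_0$ has a unique positive decaying solution via a careful Pohozaev-type identity combined with a monotone separation property of ODE trajectories. This step is typically considered the hardest input and is the one I would rely on most heavily as a black box.

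For the non-degeneracy statement, I would argue in two moves. First, any $\phi\in L^\infty \cap \ker L_0$, with $L_0 = -\Delta + 1 - pw_0^{p-1}$, can be decomposed into spherical harmonics $\phi = \sum_{\ell\ge 0} \phi_\ell(r)\,Y_\ell(\theta)$; each $\phi_\ell$ solves a radial Schr\"odinger equation, and by a Sturm-oscillation comparison with $\partial_r w_0$ (which is a sign-changing solution of the $\ell=1$ radial equation) one concludes $\phi_\ell\equiv 0$ for $\ell\ge 2$, while for $\ell=1$ the $N$-dimensional kernel is spanned by $\partial_{x_j}w_0$ and for $\ell=0$ a radial kernel element would contradict Kwong's uniqueness (or the Pohozaev identity, since $\partial_\lambda w_\lambda|_{\lambda=1}$ is not in the kernel). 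Second, for the Morse index claim, $w_0$ arises as a mountain-pass critical point, so its Morse index is at most one by standard deformation arguments; that it is exactly one comes from testing $L_0$ against $w_0$ itself and computing $\langle L_0 w_0, w_0\rangle = (1-p)\int w_0^{p+1} < 0$, which produces the negative eigenvalue $\lambda_0$; its uniqueness and the positivity/radial symmetry of the associated eigenfunction $Z$ follow from the Krein--Rutman / Perron--Frobenius principle applied to $L_0$ restricted to the radial sector combined with the Sturm analysis above.

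The main obstacle is the radial uniqueness step, which is the delicate Kwong--McLeod--Serrin argument and admits no short proof in the generality needed here; I would state it as a cited black-box theorem. Everything else---existence, symmetry, decay, and the non-degeneracy/Morse-index analysis---is then a relatively standard, if technical, package of elliptic and ODE arguments.
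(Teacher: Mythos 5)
Your proposal is correct and matches the paper's approach: the paper's own ``proof'' is precisely a list of citations to Berestycki--Lions for existence, Gidas--Ni--Nirenberg for symmetry, Kwong for radial uniqueness, Ni--Takagi (Appendix C) for non-degeneracy, and Berestycki--Wei for the Morse index, and your sketch of the internal logic behind each of these references (mountain pass with radial compactness, moving planes, the Kwong ODE uniqueness argument as black box, spherical-harmonic decomposition with Sturm comparison, and the mountain-pass/Krein--Rutman argument for Morse index one) is exactly the content of those cited works.
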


\begin{proof}
This result is well known. For the proof we refer the interested reader to \cite{BL-83} for the existence, \cite{GNN-81} for the symmetry, \cite{kwo} for the uniqueness, Appendix C in \cite{NT-93} for the nondegeneracy, and \cite{BW-10} for the Morse index.
\end{proof}

As a corollary, there is a constant $\gamma_0>0$ such that 
\begin{equation}\label{inequality-orth}
\int_{\mathbb{R}^{N}}\Big\{|\nabla\phi|^2+\phi^2-pw_0^{p-1}\phi^2\Big\}\,d\bar{\xi}\geq\gamma_0\int_{\mathbb{R}^{N}}\phi^2\,d\bar{\xi},
\end{equation}
whenever $\phi\in H^1(\mathbb{R}^N)$ and
\begin{align*}
\int_{\mathbb{R}^{N}}\phi\,\partial_jw_0\,d\bar{\xi}=0=\int_{\mathbb{R}^{N}}\phi Z\,d\bar{\xi},
\quad \forall\,j=1,\dots,N.
\end{align*}

\subsection{Local approximate solutions}
In a tubular neighbourhood of $K_\e$, \eqref{equiv-u-v} makes it obvious that $S_\e(u)=0$ is equivalent to $\widetilde S_\e(v)=0$. 

By the expression of $\widetilde S_\e(v)$ and Remark~\ref{remark2.2}, we look for approximate solutions of the form
\begin{align}\label{form-u}
v=v(y,\bar \xi)=w_0(\bar \xi)+\sum_{\ell=1}^I\e^\ell w_\ell(\e y,\bar\xi)+\e e(\e y) Z(\bar \xi),
\end{align}
where $I\in\mathbb{N}_+$, $w_0$ and $Z$ are given in Proposition~\ref{prop-w0}, $w_\ell$'s and $e$ are smooth bounded functions on their variables. 

The idea for introducing $eZ$ in \eqref{form-u} comes directly from \cite{dkw,wwy}. The reason is the linear theory in Section 4.2.2, especially Lemma~\ref{lemma-injectivity}.

To solve $\widetilde S_\e(v)=0$ accurately, the normal section $\Phi$ is to be chosen in the following form
$$
\Phi=\Phi_0+\sum_{\ell=1}^{I-1}\e^\ell \,\Phi_\ell,
$$
where $\Phi_0,\dots,\Phi_{I-1}$ are smooth bounded functions on $\bar y$.

\subsubsection{Expansion at first order in $\varepsilon$ : }

We first solve the equation $\widetilde S_\e(v)=0$ up to order $\varepsilon$. Here and in the following we will write $\mathcal{O}(\varepsilon^j)$ for terms that appear at the $j$-th order in an expansion.

Suppose $v$ has the form \eqref{form-u}, then
\begin{align*}
\widetilde S_\e(v)
&=\e\Big(-\D_{\R^N}w_1+w_1-pw_0^{p-1}w_1\Big)
+\e\big(-\e^2\mu^{-2}\D_K e+\lambda_0e\big)Z\\
&\quad+\varepsilon\Big(\mu^{-1}\Gamma_{bj}^b\partial_jw_0
+\mu^{-2}\,\langle\nabla^NV(\varepsilon y,0),\frac{\bar \xi}{\mu}+\Phi_0\rangle w_0\Big)
+\mathcal{O}(\varepsilon^2).
\end{align*}

Hence the term of order $\e$ in the right-hand side of above equation vanishes if and only if the function $w_1$ solves
\begin{equation}\label{eqw1}
L_0w_1=-\mu^{-1}\,\Gamma_{bj}^b\partial_jw_0-\mu^{-2}\,\langle\nabla^NV(\varepsilon y,0),\frac{\bar \xi}{\mu}+\Phi_0\rangle w_0.
\end{equation}

Here and in the following, we will keep the term $\e\big(-\e^2\mu^{-2}\D_K e+\lambda_0e\big)Z$ in the error. The reason is simply that it cannot be cancelled without solving an equation of $e$ since $L_0Z=\lambda_0Z$. 

By Proposition~\ref{prop-w0}, equation~\eqref{eqw1} is solvable if and only if for all $i=1,\dots,N$, 
\begin{align}\label{eqw1-cond}
\int_{\mathbb{R}^N}\Big(\mu^{-1}\,\Gamma_{bj}^b\partial_jw_0+\mu^{-2}\,\langle\nabla^NV(\varepsilon y,0),\frac{\bar \xi}{\mu}+\Phi_0\rangle\, w_0\Big)\partial_iw_0\,d\bar\xi=0.
\end{align}
Since $w_0$ is radially symmetric, \eqref{eqw1-cond} is equivalent to 
$$
\Gamma_{bi}^b\int_{\mathbb{R}^N}|\partial_1w_0|^2\,d\bar\xi=\frac{1}{2}\,\mu^{-2}\,\partial_i V(\e y,0)\,\int_{\mathbb{R}^N}w_0^2\,d\bar\xi.
$$
Recalling the identity
\begin{align}
\frac{1}{2}\int_{\mathbb{R}^N}w_0^2\,d\bar\xi=\sigma\int_{\mathbb{R}^N}|\partial_1w_0|^2\,d\bar\xi
\quad \hbox{with}\ 
\sigma=\frac{p+1}{p-1}-\frac{N}{2},
\end{align}
we get
\begin{align}\label{stationary}
\sigma\nabla^NV(\varepsilon y,0)=-V(\varepsilon y,0)H(\varepsilon y),
\end{align}
where $H=(-\Gamma_{bi}^b)_i$ is the mean curvature vector on $K$. This is exactly our stationary condition on $K$. 

When \eqref{stationary} holds, the equation of $w_1$ becomes
\begin{equation}\label{eqw1-new}
L_0w_1
=-\mu^{-1}\,\Gamma_{bj}^b\Big(\partial_jw_0+\sigma^{-1}\bar\xi^j w_0\Big)
+\sigma^{-1}\langle H,\Phi_0\rangle w_0.
\end{equation}
Hence we can write
\begin{align}
w_1=w_{1,1}+w_{1,2},
\end{align}
where
\begin{align}\label{eq:w1w2}
w_{1,1}=-\mu^{-1}\,\Gamma_{bj}^bU_j \quad\hbox{and}\quad
w_{1,2}=\sigma^{-1}\langle H,\Phi_0\rangle U_0.
\end{align}
Here $U_j$ is the unique smooth bounded function satisfying
\begin{align}\label{eq-w-j}
L_0U_j=\partial_jw_0+\sigma^{-1}\,\bar\xi^j\,w_0,\quad \int_{\mathbb{R}^N}U_j\,\partial_iw_0\,d\bar\xi=0,\ \forall\, i=1,\dots,N,
\end{align}
and $U_0$ is the unique smooth bounded function such that
\begin{align}\label{eq-U-0}
L_0U_0=w_0,\qquad \int_{\mathbb{R}^N}U_0\,\partial_iw_0\,d\bar\xi=0,\ \forall \,i=1,\dots,N.
\end{align}
It follows immediately that $w_1=w_1(\e y,\bar\xi)$ is smooth bounded on its variable. Furthermore, it is easily seen that $U_j$ is odd on variable $\bar\xi^j$ and is even on other variables. Moreover, $U_0$ has an explicit expression
\begin{align}\label{w-0}
U_0=-\frac{1}{p-1}w_0-\frac{1}{2}\bar\xi\cdot\nabla w_0.
\end{align}
\subsubsection{Expansion  at second order in $\varepsilon$}

In this subsection we will solve the equation $\widetilde S_\e(v)=0$ up to order $\varepsilon^2$ by solving $w_2$ and $\Phi_0$ together. 

Suppose $v$ has the form \eqref{form-u}, then
\begin{align*}
\widetilde S_\e(v)&=\e^2\Big(-\D_{\R^N}w_2+w_2-pw_0^{p-1}w_2\Big)
+\e \big( -\e^2\mu^{-2}\D_K e+\lambda_0e \big)Z\\
&\quad+\e^2\mathfrak{F}_2+\e^2\mathfrak{G}_2+\mathcal{O}(\e^3),
\end{align*}
where
\begin{align*}
\mathfrak{F}_2
=&\mu^{-1}\,\Gamma_{bj}^b\partial_jw_1+\mu^{-2}\,\langle\nabla^NV,\Phi_{1}\rangle \,w_{0}
+\frac{1}{3}\,R_{kijl}(\frac1\mu\bar\xi^k+\Phi_0^k)(\frac1\mu\bar\xi^l+\Phi_0^l)\partial_{ij}^2w_0\\
&-\mu^{-1}\Big(\widetilde g^{ab}\,R_{kabj}+\frac{2}{3}R_{kiij}-\Gamma_{ak}^c\Gamma_{cj}^a\Big)(\frac{\bar \xi^k}{\mu}+\Phi_0^k)\partial_jw_0\\
&-\mu^{-2}\Big(\frac{\bar\xi^j}{\mu}\,\D_K \mu-2\,\nabla_K\mu\cdot\nabla_K\Phi_0^j-\mu\,\D_K\Phi_0^j\Big) \,\partial_j w_0\\
&-h^{-1}\mu^{-2}\,\Delta_{K}h\,w_0
-2(h\mu^2)^{-1}\nabla_K h\cdot\Big( \frac{\bar \xi^j}{\mu}\,\nabla_K\mu -\mu\,\nabla_K\Phi_0^j\Big)\,\partial_jw_0\\
&-\Big(\mu^{-2}\bar\xi^i\,\nabla_K\mu-\nabla_K\Phi_0^i\Big)\Big( \mu^{-2}\bar\xi^j\,\nabla_K\mu-\nabla_K\Phi_0^j  \Big)\,\partial_{ij}^2w_0\\
&+\mu^{-2}\,\langle\nabla^NV,\frac{\bar \xi}{\mu}+\Phi_0\rangle w_1+\frac{1}{2}\,\mu^{-2}\,(\nabla^N)^2V[\frac{\bar \xi}{\mu}+\Phi_0,\frac{\bar \xi}{\mu}+\Phi_0]\,w_0-\frac{1}{2}p(p-1)w_0^{p-2}w_1^2,
\end{align*}
and 
\begin{align*}
\mathfrak{G}_2
=&\mu^{-1}\Gamma_{bj}^b\,e\,\partial_jZ
+\mu^{-2}\,\langle\nabla^NV,\frac{\bar \xi}{\mu}+\Phi_0\rangle eZ
-\frac{1}{2}p(p-1)w_0^{p-2}\Big\{(w_1+eZ)^2-w_1^2\Big\}.
\end{align*}

Hence the term of order $\e^2$ vanishes (except the term $\e\big(-\e^2\mu^{-2}\D_K e+\lambda_0e \big)Z$) if and only if $w_2$ satisfies the equation
\begin{align*}
L_0w_2=-\mathfrak{F}_2-\mathfrak{G}_2.
\end{align*}
By Freedholm alternative this equation is solvable if and only if $\mathfrak{F}_2+\mathfrak{G}_2$ is $L^2$ orthogonal to the kernel of linearized operator $L_0$, which is spanned by the functions $\partial_i w_0$, $i=1,\dots,N$.

It is convenient to write $\mathfrak{F}_2$ as
$$
\mathfrak{F}_2
=\mu^{-2}\langle\nabla^NV,\Phi_{1}\rangle w_{0}
+\widetilde{\mathfrak{F}}_2.
$$
Then $\widetilde{\mathfrak{F}}_2$ does not involve $\Phi_1$. By \eqref{stationary}, similar to $w_1$, we can write $w_2$ as
$$
w_2=w_{2,1}+w_{2,2},
$$
where 
$w_{2,2}=\sigma^{-1}\langle H,\Phi_1\rangle U_0$
solves the equation 
$$
L_0w_{2,2}=-\mu^{-2}\langle\nabla^NV,\Phi_{1}\rangle w_{0},
$$
and $w_{2,1}$ will solve the equation 
$$
L_0w_{2,1}=-\widetilde{\mathfrak{F}}_2-\mathfrak{G}_2.
$$

To solve the equation on $w_{2,1}$ we write  
\begin{equation*}
\widetilde{\mathfrak{F}}_2=\widetilde{\mathfrak{F}}_2(\Phi_0)=S_{2,0}+S_2(\Phi_0)+N_2(\Phi_0),
\end{equation*}
where $S_{2,0}=\widetilde{\mathfrak{F}}_2(0)$ does not involve $\Phi_0$, $S_2(\Phi_0)$ is the sum of linear terms of $\Phi_0$, and $N_2(\Phi_0)$ is the nonlinear term of $\Phi_0$.

Recall that  $w_1=w_{1,1}+w_{1,2}$ with
\begin{align*}
w_{1,1}=-\mu^{-1}\,\Gamma_{bj}^bU_j \quad\hbox{and}\quad
w_{1,2}=\sigma^{-1}\langle H,\Phi_0\rangle U_0.
\end{align*}
Then
\begin{align*}
S_{2,0}
=&\mu^{-1}\,\Gamma_{bj}^b\,\partial_jw_{1,1} 
+\frac{1}{3}\mu^{-2}R_{kijl}\,(\bar\xi^k\,\bar\xi^l\,\partial_{ij}^2w_0)
-\mu^{-2}\Big(\widetilde g^{ab}\,R_{kabj}+\frac{2}{3}R_{kiij}-\Gamma_{ak}^c\Gamma_{cj}^a\Big)(\bar \xi^k\,\partial_jw_0)\\
&-(\mu^{-3}\D_K \mu)(\bar\xi^j\,\partial_j w_0)
-(h^{-1}\mu^{-2}\Delta_{K}h)\, w_0
-2(h\mu^3)^{-1}(\nabla_K h\cdot\nabla_K\mu)(\bar \xi^j\,\partial_jw_0)\\
&-\mu^{-4}|\nabla_K\mu|^2\,(\bar\xi^i\,\bar\xi^j\,\partial_{ij}^2w_0)
+\mu^{-3}\langle\nabla^NV,\bar \xi\rangle w_{1,1}
+\frac{1}{2}\mu^{-4}(\nabla^N)^2V[\bar \xi,\bar \xi]\,w_0\\
&-\frac{1}{2}p(p-1)w_0^{p-2}w_{1,1}^2,
\end{align*}
\begin{align*}
S_2(\Phi_0)
=&\mu^{-1}\Gamma_{bj}^b\,\partial_jw_{1,2}
+\frac{2}{3}\mu^{-1}R_{kijl}\,\Phi_0^l\,(\bar\xi^k\,\partial_{ij}^2w_0)
-\mu^{-1}\Big(\widetilde g^{ab}\,R_{kabj}+\frac{2}{3}R_{kiij}-\Gamma_{ak}^c\Gamma_{cj}^a\Big)\Phi_0^k\,\partial_jw_0\\
&+\mu^{-2}\Big(2\,\nabla_K\mu\cdot\nabla_K\Phi_0^j+\mu\,\D_K\Phi_0^j\Big)\partial_j w_0
+2(h\mu)^{-1}\Big(\nabla_K h\cdot\nabla_K\Phi_0^j\Big)\partial_jw_0\\
&+2\mu^{-2}\Big(\nabla_K\mu\cdot\nabla_K\Phi_0^j\Big)\,(\bar\xi^i\,\partial_{ij}^2w_0)
+\mu^{-3}\langle\nabla^NV,\bar \xi\rangle w_{1,2}
+\mu^{-2}\langle\nabla^NV,\Phi_0\rangle w_{1,1}\\
&+\mu^{-3}(\nabla^N)^2V[\Phi_0,\bar \xi]\,w_0-p(p-1)w_0^{p-2}w_{1,1}w_{1,2},
\end{align*}
and 
\begin{align*}
N_2(\Phi_0)
&=\frac{1}{3}R_{kijl}\,\Phi_0^k\,\Phi_0^l\,\partial_{ij}^2w_0
-(\nabla_K\Phi_0^i\cdot\nabla_K\Phi_0^j)\,\partial_{ij}^2w_0
+\mu^{-2}\langle\nabla^NV,\Phi_0\rangle w_{1,2}\\
&\quad+\frac{1}{2}\,\mu^{-2}\,(\nabla^N)^2V[\Phi_0,\Phi_0]\,w_0
-\frac{1}{2}p(p-1)w_0^{p-2}w_{1,2}^2.
\end{align*}
Therefore, 
\begin{align*}
\int_{\R^N} S_2(\Phi_0)\,\partial_s w_0
=&\mu^{-1}\Gamma_{bj}^b\int_{\R^N}\partial_jw_{1,2}\,\partial_s w_0
+\frac{2}{3}\mu^{-1}R_{kijl}\,\Phi_0^l\int_{\R^N} \bar\xi^k\,\partial_{ij}^2w_0\,\partial_sw_0\\
&-\mu^{-1}\Big(\widetilde g^{ab}\,R_{kabj}+\frac{2}{3}R_{kiij}-\Gamma_{ak}^c\Gamma_{cj}^a\Big)\,\Phi_0^k\int_{\R^N}\partial_jw_0\,\partial_sw_0\\
&+\mu^{-2}\Big(2\nabla_K\mu\cdot\nabla_K\Phi_0^j+\mu\,\D_K\Phi_0^j\Big)\int_{\R^N}\partial_j w_0\,\partial_sw_0\\
&+2(h\mu)^{-1}\Big(\nabla_K h\cdot\nabla_K\Phi_0^j\Big)\int_{\R^N}\partial_jw_0\,\partial_s w_0\\
&+2\mu^{-2}\Big(\nabla_K\mu\cdot\nabla_K\Phi_0^j\Big)\int_{\R^N}\bar\xi^i\,\partial_{ij}^2w_0\,\partial_sw_0\\
&+\mu^{-2}\partial_jV(\e y,0)\,\Big(\mu^{-1}\int_{\R^N}\bar\xi^j\,w_{1,2}\,\partial_sw_0+ \Phi_0^j\int_{\R^N}w_{1,1}\,\partial_sw_0\Big)\\
&+\mu^{-3}\partial_{ij}^2V (\e y,0)\,\Phi_0^j\int_{\R^N}\bar\xi^i\,w_0 \,\partial_sw_0\\
&-p(p-1)\int_{\R^N} w_0^{p-2}\,w_{1,1}\,w_{1,2}\,\partial_sw_0.
\end{align*}

Let us denote by $A$ the sum of terms involving $w_{1,1}$ and $w_{1,2}$ in the above formula. Using \eqref{stationary} and \eqref{eq:w1w2} we can write
\begin{align*}
A=\mu^{-1}\,\sigma^{-1}\,\langle H,\Phi_0\rangle\Gamma_{aj}^a\int_{\mathbb{R}^N}\Big(\partial_jU_0+U_j+\sigma^{-1}\,\bar\xi^j\,U_0+p(p-1)w_0^{p-2}\,U_j\,U_0\Big\}\partial_sw_0.
\end{align*}
To compute this term we differentiate the equation \eqref{eq-w-j} on $U_j$ with respect to the variable $\bar\xi^j$ to obtain
\begin{align}
L_0(\partial_jU_j)-p(p-1)w_0^{p-2}U_j\partial_jw_0=\partial_{jj}^2w_0+\sigma^{-1}w_0+\sigma^{-1}\,\bar\xi^j\,\partial_jw_0.
\end{align}
Multiplying the above equation by $U_0$ and integrating by parts, we have
\begin{align*}
&\int_{\mathbb{R}^N}\Big\{\partial_jU_0+U_j+\sigma^{-1}\,\bar\xi^j\,U_0+p(p-1)w_0^{p-2}U_j\,U_0\Big\}\partial_jw_0\\
&=-\int_{\mathbb{R}^N}\big(2\partial_{jj}^2w_0+\sigma^{-1}\,w_0\big)U_0\\
&=-2\int_{\mathbb{R}^N}\big(-\frac{1}{p-1}w_0-\frac{1}{2}\,\bar\xi^l\partial_lw_0\big)\partial_{jj}^2w_0
-\sigma^{-1}\int_{\mathbb{R}^N}\big(-\frac{1}{p-1}w_0-\frac{1}{2}\,\bar\xi^l\partial_lw_0\big)\,w_0\\
&=-\big(\frac{2}{p-1}+1-\frac{N}{2}\big)\int_{\mathbb{R}^N}|\partial_1w_0|^2
-\sigma^{-1}\big(\frac{N}{4}-\frac{1}{p-1}\big)\int_{\mathbb{R}^N}w_0^2\\
&=-\int_{\mathbb{R}^N}|\partial_1w_0|^2.
\end{align*}

On the other hand, by direct computations we have
\begin{align*}
\int_{\mathbb{R}^N}\partial_jw_0\,\partial_sw_0
=\delta_{js}\int_{\mathbb{R}^N}(\partial_1w_0)^2,
\end{align*}
\begin{align*}
\int_{\mathbb{R}^N}\partial_{kj}^2w_0\,\bar\xi^k\partial_sw_0
=\frac{1}{2}\,\delta_{js}\int_{\mathbb{R}^N}\bar\xi^k\partial_k(\partial_jw_0)^2
=-\frac{N}{2}\,\delta_{js}\int_{\mathbb{R}^N}(\partial_1w_0)^2,
\end{align*}
$$
R_{kijl}\,\Phi_0^l\int_{\R^N} \bar\xi^k\,\partial_{ij}^2w_0\,\partial_sw_0
=R_{sjjl}\,\Phi_0^l\int_{\mathbb{R}^N}(\partial_1w_0)^2,
$$
$$
\Big(\widetilde g^{ab}\,R_{kabj}+\frac{2}{3}R_{kiij}-\Gamma_{ak}^c\Gamma_{cj}^a\Big)\,\Phi_0^k\int_{\R^N}\partial_jw_0\,\partial_sw_0
=
\Big(\widetilde g^{ab}\,R_{kabs}+\frac{2}{3}R_{kiis}-\Gamma_{ak}^c\Gamma_{cs}^a\Big)\,\Phi_0^k\int_{\mathbb{R}^N}(\partial_1w_0)^2.
$$
Summarizing, we have
\begin{align*}
\int_{\R^N} S_2(\Phi_0)\,\partial_s w_0
&=\mu^{-1}\bigg\{  
\D_K\Phi_0^s-\Big(\widetilde g^{ab}\,R_{kabs}-\Gamma_{ak}^c\Gamma_{cs}^a\Big)\Phi_0^k
+(2-N)\mu^{-1}\nabla_K\mu\cdot\nabla_K\Phi_0^s
\\
&\quad+2h^{-1}\nabla_Kh\cdot\nabla_K\Phi_0^s-\sigma\mu^{-2}\,\partial^2_{sj}V(\e y,0)\Phi_0^j-\sigma^{-1}\Gamma_{as}^a\langle H,\Phi_0\rangle\bigg\}\int_{\mathbb{R}^N}(\partial_1w_0)^2.
\end{align*}
Now, using the fact that
$$
\mu^{-1}\,\nabla_K\mu=\frac12 \,V^{-1}\,\nabla_K V \quad  \hbox{and}\quad h^{-1}\,\nabla_Kh=\frac{1}{p-1} \,V^{-1}\,\nabla_K V,
$$
we obtain (recalling the definition of  $\sigma$) that
$$
(2-N)\mu^{-1}\nabla_K\mu\cdot\nabla_K\Phi_0^s+2h^{-1}\nabla_Kh\cdot\nabla_K\Phi_0^s=\sigma\,V^{-1}\,\nabla_K V \cdot\nabla_K\Phi_0^s.
$$
Hence we summarize
\begin{align*}
\int_{\R^N} S_2(\Phi_0)\,\partial_s w_0
&=\mu^{-1}\bigg\{  
\D_K\Phi_0^s-\Big(\widetilde g^{ab}\,R_{kabs}-\Gamma_{ak}^c\Gamma_{cs}^a\Big)\Phi_0^k
+
\sigma V^{-1}\nabla_K V \cdot\nabla_K\Phi_0^s\\
&\qquad\qquad
-\sigma\mu^{-2}\partial^2_{sj}V(\e y,0)\Phi_0^j+\sigma^{-1}\Gamma_{bj}^b\,\Gamma_{as}^a\,\Phi_0^j\bigg\}\int_{\mathbb{R}^N}|\partial_1w_0|^2.
\end{align*}

Define $\mathcal{J}_K:NK\mapsto NK$ is a linear operator from the family of smooth sections of normal bundle to $K$ into itself, whose components are given by
\begin{align}\label{J}
\begin{array}{ll}
(\mathcal{J}_K\Phi_0)^s
=\D_K\Phi_0^s
-\Big(\widetilde g^{ab}\,R_{kabs}-\Gamma_{ak}^c\Gamma_{cs}^a\Big)\Phi_0^k
+\sigma V^{-1}\nabla_K V \cdot\nabla_K\Phi_0^s
\vspace{2mm}\\
\quad\qquad\qquad
-\sigma\mu^{-2}\partial^2_{sj}V(\bar y,0)\Phi_0^j+\sigma^{-1}\Gamma_{bj}^b\,\Gamma_{as}^a\,\Phi_0^j.
\end{array}
\end{align}
Then
\begin{align}
\int_{\R^N} S_2(\Phi_0)\,\partial_s w_0
&=\mu^{-1}\big(\int_{\mathbb{R}^N}|\partial_1w_0|^2\big)(\mathcal{J}_K\Phi_0)^s(\e y).
\end{align}

On the other hand, it is easy to check that 
\begin{align}
\int_{\R^N} S_{2,0}\,\partial_s w_0=0=\int_{\R^N} N_2(\Phi_0)\,\partial_s w_0
\end{align}
and 
\begin{align*}
\int_{\R^N}\mathfrak{G}_2\,\partial_s w_0
&=\bigg\{\mu^{-1}\Gamma_{bs}^b\int_{\R^N}\partial_s Z\,\partial_s w_0
+\mu^{-3}\partial_sV(\e y,0)\int_{\R^N}\bar\xi^s\,Z\,\partial_s w_0\nonumber\\
&\qquad-p(p-1)\int_{\R^N}w_0^{p-2}w_{1,1}Z\,\partial_s w_0\bigg\}e\\
&=\mu^{-1}\Gamma_{bs}^be
\int_{\R^N}\Big\{\partial_s Z+\sigma^{-1}Z\,\bar\xi^s+p(p-1)w_0^{p-2}Z\,U_s\Big\}\partial_s w_0\\
&=c_0\mu^{-1}\Gamma_{bs}^be.
\end{align*}

Therefore, the solvability of equation on $w_2$ is equivalent to the solvability of following equation on $\Phi_0$:
\begin{equation}\label{eq-phi0}
\mathcal{J}_K\Phi_0=\mathfrak H_2(\bar y;e),
\end{equation}
where $\mathfrak H_2(\bar y;e)=c_0He$ is a smooth bounded function. 

By the non-degeneracy condition on $K$, \eqref{eq-phi0} is solvable. Moreover, for any given $e$, it is easy to check that $\Phi_0=\Phi_0(\bar y;e)$ is a smooth bounded function on $\bar y$ and is Lipschitz continuous with respect to $e$. 

Now let us go back  to the equation of $w_{2,1}$:
$$
L_0w_{2,1}=-\widetilde{\mathfrak{F}}_2-\mathfrak{G}_2.
$$
Since both $\widetilde{\mathfrak{F}}_2$ and $\mathfrak{G}_2$ are smooth bounded functions of $(\e y,\bar\xi)$. Hence $w_{2,1}=w_{2,1}(\e y,\bar\xi)$ is also a smooth bounded function of $(\e y,\bar\xi)$. Moreover, $w_{2,1}=w_{2,1}(\e y,\bar\xi;e)$ is Lipschitz continuous with respect to $e$. 

\subsubsection{Higher order approximations}  

The construction of higher order terms follows exactly from the same calculation. Indeed,  to solve the equation up to an error of order $\e^{j+1}$ for some $j\ge 3$, we use an iterative scheme of Picard's type : assuming all the functions $w_i$'s ($1\le i\le j-1$) constructed,  we need to choose a function $w_j$ to solve an equation similar to that of $w_2$ (with obvious modifications) by solving an equation of $\Phi_{j-2}$ similar to that of $\Phi_0$.

When we collect all terms  of order $\mathcal{O}(\e^j)$ in $\widetilde S_\e(v)$, assuming all $w_i$'s for $i=1,\cdots j-1$ constructed (by the iterative scheme), we have 
\begin{align*}
\widetilde S_\e(v)
&=\e^j\Big(-\D_{\R^N}w_j+w_j-pw_0^{p-1}w_j\Big)
+\e\big( -\e^2\mu^{-2}\D_K e+\lambda_0e \big)Z\\
&\quad+\e^j\mathfrak{F}_j
+\e^j\mathfrak{E}_j\,e\,Z
+\e^j\mathcal A_j^i(\e y,\bar\xi ; \Phi_0,\cdots,\Phi_{j-3})\,e\,\partial_i Z\\
&\quad+\e^j\mathcal B_j^{i\ell}(\e y,\bar\xi ; \Phi_0,\cdots,\Phi_{j-3})\,e\,\partial^2_{i\ell} Z
+\e^j\mathcal C_j^i(\e y,\bar\xi ; \Phi_0,\cdots,\Phi_{j-3})\cdot \nabla_Ke\,\partial_i Z\\
&\quad+\e^j\mathcal D_j^{ab}(\e y,\bar\xi ; \Phi_0,\cdots,\Phi_{j-3})\,\partial^2_{ab}e\,Z
+\mathcal{O}(\e^{j+1}),
\end{align*}
with
\begin{align*}
\mathfrak{F}_j
=&\mu^{-1}\Gamma_{bl}^b\,\partial_lw_{j-1}
+\frac{2}{3}\mu^{-1}R_{kisl}\,\bar\xi^k\,\Phi_{j-2}^l\,\partial_{is}^2w_0
-\mu^{-1}\Big(\widetilde g^{ab}\,R_{kabs}+\frac{2}{3}R_{kiis}-\Gamma_{ak}^c\Gamma_{cs}^a\Big)\Phi_{j-2}^k\,\partial_sw_0\\
&+\mu^{-2}\Big(2\,\nabla_K\mu\cdot\nabla_K\Phi_{j-2}^s+\mu\,\D_K\Phi_{j-2}^s\Big)\partial_s w_0
+2(h\mu)^{-1}\Big(\nabla_K h\cdot\,\nabla_K\Phi_{j-2}^s\Big)\partial_sw_0\\
&+2\mu^{-2}\Big(\nabla_K\mu\cdot\nabla_K\Phi_{j-2}^s\Big)(\bar\xi^i\,\partial_{is}^2w_0)
+\mu^{-2}\langle\nabla^NV,\Phi_0\rangle w_{j-1}
+\mu^{-2}\langle\nabla^NV,\Phi_{j-2}\rangle w_{1}\\
&+\mu^{-2}\langle\nabla^NV,\Phi_{j-1}\rangle \,w_{0}
+\mu^{-2}\langle\nabla^NV,\frac{\bar \xi}{\mu}\rangle \,w_{j-1}
+\mu^{-3}\partial^2_{kl}V(\e y,0)\,\Phi_{j-2}^l\,\bar\xi^k\,w_{0}\\
&-p(p-1)w_0^{p-2}w_1w_{j-1}+G_j(\e y,\bar\xi ;
\Phi_0,\cdots,\Phi_{j-3})\\
=&\mu^{-2}\langle\nabla^NV,\Phi_{j-1}\rangle \,w_{0}+\widetilde{\mathfrak{F}}_j
\end{align*}
and 
\begin{align*}
\mathfrak{E}_j
=-p(p-1)w_0^{p-2}w_{j-1}
+\mu^{-2}\langle\nabla^NV,\Phi_{j-2}\rangle +\widetilde{\mathfrak{E}}_j(\e y,\bar\xi ;\Phi_0,\cdots,\Phi_{j-3}),
\end{align*}
where $\mathcal A_j^i$, $\mathcal B_j^{i\ell}$, $\mathcal C_j^i$, $\mathcal D_j^{ab}$ and $\widetilde{\mathfrak{E}}_j$ are smooth bounded functions on their variables. 

Except for $\e \big( -\e^2\mu^{-2}\D_K e+\lambda_0e \big)Z$, the term of order $\e^j$ vanishes if and only if $w_j$ satisfies the equation
\begin{align*}
L_0w_j&=-\mathfrak{F}_j
-\mathfrak{E}_j\,e\,Z
-\mathcal A_j^i(\e y,\bar\xi ; \Phi_0,\cdots,\Phi_{j-3})\,e\,\partial_i Z
-\mathcal B_j^{i\ell}(\e y,\bar\xi ; \Phi_0,\cdots,\Phi_{j-3})\,e\,\partial^2_{i\ell} Z\\
&\quad-\mathcal C_j^i(\e y,\bar\xi ; \Phi_0,\cdots,\Phi_{j-3})\cdot \nabla_Ke\,\partial_i Z
-\mathcal D_j^{ab}(\e y,\bar\xi ; \Phi_0,\cdots,\Phi_{j-3})\,\partial^2_{ab}e\,Z.
\end{align*}
By Freedholm alternative this equation is solvable if and only if the right hand side is $L^2$ orthogonal to the kernel of linearized operator $L_0$. Before  computing the projection against $\partial_s w_0$, let us recall that
$$
w_{j-1}= w_{j-1,1}+\sigma^{-1}\langle H,\Phi_{j-2}\rangle U_0,
$$
where
$w_{j-1,1}\perp\partial_iw_0$ is a function which does not involve $\Phi_{j-2}$. 

As before we look for a solution $w_j$ of the form
$$
w_{j}=w_{j,1}+\sigma^{-1}\langle H,\Phi_{j-1}\rangle U_0,
$$
where $w_{j,1}\perp\partial_i w_0$ solves
\begin{align*}
L_0w_{j,1}&=-\widetilde{\mathfrak{F}}_j
-\mathfrak{E}_j\,e\,Z
-\mathcal A_j^i(\e y,\bar\xi ; \Phi_0,\cdots,\Phi_{j-3})\,e\,\partial_i Z
-\mathcal B_j^{i\ell}(\e y,\bar\xi ; \Phi_0,\cdots,\Phi_{j-3})\,e\,\partial^2_{i\ell} Z\\
&\quad-\mathcal C_j^i(\e y,\bar\xi ; \Phi_0,\cdots,\Phi_{j-3})\cdot \nabla_Ke\,\partial_i Z
-\mathcal D_j^{ab}(\e y,\bar\xi ; \Phi_0,\cdots,\Phi_{j-3})\,\partial^2_{ab}e\,Z.
\end{align*}

Since $j\geq3$, we can write 
\begin{equation*}
\widetilde{\mathfrak{F}}_j
=\widetilde{\mathfrak{F}}_j(\Phi_{j-2})
=S_{j,0}+S_j(\Phi_{j-2}),
\end{equation*}
where $S_{j,0}=S_{j,0}(\e y,\bar\xi ; \Phi_0,\cdots,\Phi_{j-3})$ does not involve $\Phi_{j-2}$, and $S_j(\Phi_{j-2})$ is the sum of linear terms of $\Phi_{j-2}$. Since 
\begin{align}
\int_{\R^N} S_j(\Phi_{j-2})\,\partial_s w_0&=\mu^{-1}\big(\int_{\mathbb{R}^N}|\partial_1w_0|^2\big)(\mathcal{J}_K\Phi_{j-2})^s(\e y),
\end{align}
the equation on $w_{j,1}$ (and then on $w_j$) is solvable if and only if $\Phi_{j-2}$ satisfies an equation of the form
$$
\mathcal{J}_K\Phi_{j-2}
=\mathfrak H_j(\bar y ; \Phi_0,\cdots,\Phi_{j-3}, e).
$$
This latter equation is solvable by the non-degeneracy condition on $K$. Moreover, for any given $e$, by induction method one can get $\Phi_{j-2}=\Phi_{j-2}(\bar y;e)$ is a smooth bounded function on $\bar y$ and is Lipschitz continuous with respect to $e$. When this is done, since the right hand side of equation of $w_{j,1}$ is a smooth bounded function of $(\e y,\bar\xi)$, we see at once that $w_{j,1}=w_{j,1}(\e y,\bar\xi)$ is a smooth bounded function of $(\e y,\bar\xi)$. Furthermore, $w_{j,1}=w_{j,1}(\e y,\bar\xi;e)$ is Lipschitz continuous with respect to $e$. 

\begin{remark}
To get the higher order approximations, our argument only need the expansion of the Laplace-Beltrami operator up to second order. It is slightly different from the argument used in \cite{wwy}.
\end{remark}

\subsection{Summary}

Let $v_{I}$ be the local approximate solution constructed in the previous section, i.e., 
\begin{align}
v_{I}(y,\bar \xi)
=w_0(\bar \xi)
+\sum_{\ell=1}^I\e^\ell w_\ell(\e y,\bar\xi)
+\e e(\e y) Z(\bar \xi),
\end{align}
for $I\in \N_+$ an arbitrary positive integer.

From the analysis in the previous subsections, the stationary and non-degeneracy conditions on $K$ can be seen as conditions such that $v_I$ is very close to a genuine solution and can be reformulated as follows.
\begin{proposition}
Let $K^k$ be a closed (embedded or immersed) submanifold of $M^n$. Then the stationary condition on $K$ is \eqref{stationary}, and the non-degeneracy condition on $K$ is equivalent to the invertibility of operator $\mathcal{J}_K$ defined in \eqref{J}.
\end{proposition}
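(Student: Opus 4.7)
The plan is to verify both claims by direct variational computation of the weighted functional $\mathcal{E}(t)=\int_{K_t}V^\sigma\,dvol$ along one-parameter families of normal deformations, and to match the resulting Euler-Lagrange equation with \eqref{stationary} and the Jacobi operator with $\mathcal{J}_K$. Conceptually, the computations in Sections 3.2.1--3.2.2 already did the hardest part: \eqref{stationary} arose as the solvability condition at order $\e$, and $\mathcal{J}_K$ emerged as the linearization at order $\e^2$. On the variational side, since $E(U_K)\sim\e^{n-k}\int_K V^\sigma\,dvol$, the solvability obstructions coming from the Lyapunov--Schmidt procedure must coincide with the critical-point and non-degeneracy conditions of $\mathcal{E}$, which is exactly what the proposition asserts.

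For the first statement, take a smooth family $K_t$ with $K_0=K$ and initial normal velocity $\Phi=\Phi^iE_i$. The standard first-variation-of-area formula, combined with the paper's sign convention $H_j=-\Gamma^a_{aj}$, gives
\[
\frac{d}{dt}\bigg|_{t=0}dvol(K_t)=\langle H,\Phi\rangle\,dvol,
\]
so that
\[
\mathcal{E}'(0)=\int_K V^{\sigma-1}\,\langle\,\sigma\nabla^NV+VH\,,\,\Phi\,\rangle\,dvol.
\]
Requiring this to vanish for every compactly supported normal section $\Phi$ yields $\sigma\nabla^NV=-VH$, which is \eqref{stationary}.

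For the second statement, assume $K$ stationary and compute the second variation $Q(\Phi,\Phi):=\mathcal{E}''(0)$. A Simons-type formula for the second variation of volume contributes $|\nabla^N\Phi|^2$, $-\mathrm{Ric}(\Phi,\Phi)$ and the squared second fundamental form $\Gamma_b^a(\Phi)\Gamma_a^b(\Phi)$; the Taylor expansion of $V^\sigma$ in the normal direction contributes the Hessian term $-\sigma V^{-1}(\nabla^N)^2V[\Phi,\Phi]$ and, via the weight, the drift $\sigma V^{-1}\nabla_KV\cdot\nabla_K\Phi$; finally the stationarity identity $\sigma V^{-1}\nabla^NV=-H$ allows one to replace the remaining $(\nabla^NV)\cdot\Phi$ contribution by $\sigma^{-1}H(\Phi)^2=\sigma^{-1}\Gamma^b_{bj}\Gamma^a_{as}\Phi^j\Phi^s$. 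Assembling these terms reproduces exactly the quadratic form written in the ND definition. An integration by parts with respect to the weighted measure $V^\sigma\sqrt{\det\widetilde g}\,dvol$ on $K$ then rewrites it as
\[
Q(\Phi,\Phi)=c\int_K\langle \mathcal{J}_K\Phi,\Phi\rangle\, V^\sigma\sqrt{\det\widetilde g}\,dvol,
\]
where the individual contributions match \eqref{J} term by term: $\Delta_K\Phi^s$ from $|\nabla^N\Phi|^2$, the curvature-and-shape piece $-(\widetilde g^{ab}R_{kabs}-\Gamma^c_{ak}\Gamma^a_{cs})\Phi^k$ from $-\mathrm{Ric}(\Phi,\Phi)+\Gamma_b^a(\Phi)\Gamma_a^b(\Phi)$, the drift $\sigma V^{-1}\nabla_KV\cdot\nabla_K\Phi^s$ from the weighted integration by parts, $-\sigma\mu^{-2}\partial^2_{sj}V\,\Phi^j$ from the Hessian of $V$, and $\sigma^{-1}\Gamma^b_{bj}\Gamma^a_{as}\Phi^j$ from the stationary substitution. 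Since $\mathcal{J}_K$ is an elliptic, formally self-adjoint operator on sections of $NK$ over the compact manifold $K$, Fredholm theory gives that the quadratic form $Q$ is non-degenerate if and only if $\ker\mathcal{J}_K=\{0\}$, i.e.\ if and only if $\mathcal{J}_K$ is invertible.

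The main obstacle is the careful bookkeeping in the second variation: one has to combine the Simons-type formula (with its ambient Ricci, shape operator and intrinsic normal Laplacian terms) with the Taylor expansion of $V^\sigma$, and then apply the stationarity identity to convert $(\nabla^N V)\cdot\Phi$ terms into mean-curvature terms, so that the coefficients and signs agree precisely with those in the ND quadratic form and in \eqref{J}. Once this matching is performed, the equivalence between non-degeneracy of $Q$ and invertibility of $\mathcal{J}_K$ reduces to standard weighted integration by parts on $K$ together with elliptic Fredholm theory, and it is essentially the same algebraic calculation already carried out when deriving the solvability obstruction for $\Phi_0$ in Section 3.2.2.
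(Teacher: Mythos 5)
The paper states this Proposition as a summary of the Lyapunov--Schmidt computations in Sections 3.2.1--3.2.2 and does not supply a separate proof: the condition \eqref{stationary} is literally the equation in Definition 2.1, and $\mathcal{J}_K$ in \eqref{J} arose as the solvability obstruction for $\Phi_0$, so the paper leaves it to the reader to observe that $\int_K\langle\mathcal{J}_K\Phi,\Phi\rangle V^\sigma\sqrt{\det\widetilde g}\,dvol$ coincides term-by-term with the quadratic form of Definition 2.2 (with $\mathrm{Ric}(\Phi,\Phi)$ understood as the partial trace $\widetilde g^{ab}R_{kabs}\Phi^k\Phi^s$), whence Fredholm theory gives the claimed equivalence. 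Your approach is genuinely different and more self-contained: rather than invoking the PDE expansion, you directly compute $\mathcal{E}'(0)$ and $\mathcal{E}''(0)$ for $\mathcal{E}(t)=\int_{K_t}V^\sigma\,dvol$, which gives a cleaner variational interpretation and makes the connection to $f$-minimality (noted only in a Remark by the paper, citing Liu) explicit. Your first-variation step is correct with the paper's sign convention $H_j=-\Gamma^a_{aj}$. The second-variation step is where care is required and where your sketch is thinnest: the second variation of volume for a \emph{non-minimal} submanifold carries extra terms (e.g.\ $\langle H,\Phi\rangle^2$, $\langle\nabla^\perp_\Phi\Phi,H\rangle$, and a $\mathrm{div}_K$-type piece) beyond the three you list, and these must cancel against or combine with the $\langle\nabla^N V,\Phi\rangle^2$ and first-order cross terms from the Taylor expansion of $V^\sigma$ \emph{after} using stationarity; you do correctly locate the source of the $\sigma^{-1}H(\Phi)^2$ term, but the remaining bookkeeping is asserted rather than carried out. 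Since the ultimate claim of the Proposition is the equivalence between Definition 2.2 and invertibility of $\mathcal{J}_K$ (not that $Q=\mathcal{E}''(0)$), a shorter route matching your last paragraph's Fredholm argument would be to verify the term-by-term identity $Q(\Phi,\Phi)=\int_K\langle\mathcal{J}_K\Phi,\Phi\rangle V^\sigma\sqrt{\det\widetilde g}\,dvol$ directly by weighted integration by parts, which is essentially mechanical and avoids the delicate second-variation formula altogether. As written your proposal is plausible and conceptually illuminating, but the second-variation bookkeeping constitutes a real (if technical) gap that you acknowledge but do not close.
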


Summarizing, we have the following proposition by taking $j=I+1$, $w_{I+1}=0$, and $\Phi_{I+1}=0$ in Section 3.2.3.

\begin{proposition}\label{prop-local approximation} 
Let $I\geq3$ be an arbitrary positive integer, for any given smooth functions $\Phi_{I-1}$ and $e$ on $K$, there are smooth bounded functions 
\[
w_\ell=w_{\ell,1}(\e y,\bar\xi;e)+\sigma^{-1}\langle H,\Phi_{\ell-1}\rangle U_0, 
\quad\ell=1,\dots,I,
\] 
and
\[
\Phi_j=\Phi_j(\bar y; e),
\quad j=0,\dots,I-2,
\]
such that 
\begin{align}
\widetilde S_\e(v_{I})
&=\e\big(-\e^2\mu^{-2}\D_K e+\lambda_0e \big)Z
+\e^{I+1}\widetilde{\mathfrak{F}}_{I+1}
+\e^{I+1}\mathfrak{E}_{I+1}\,e\,Z\nonumber\\[3mm]
&\quad+\e^{I+1}\mathcal A_{I+1}^i(\e y,\bar\xi;e)\,e\,\partial_i Z
+\e^{I+1}\mathcal B_{I+1}^{i\ell}(\e y,\bar\xi;e)\,e\,\partial^2_{i\ell} Z\\[3mm]
&\quad+\e^{I+1}\mathcal C_{I+1}^i(\e y,\bar\xi;e)\cdot \nabla_Ke\,\partial_i Z
+\e^{I+1}\mathcal D_{I+1}^{ab}(\e y,\bar\xi;e)\,\partial^2_{ab}e\,Z
+\mathcal{O}(\e^{I+2}),\nonumber
\end{align}
where 
\begin{align*}
\widetilde{\mathfrak{F}}_{I+1}
=&\mu^{-1}\Gamma_{bl}^b\,\partial_lw_{I}
+\frac{2}{3}\mu^{-1}R_{kisl}\,\bar\xi^k\,\Phi_{I-1}^l\,\partial_{is}^2w_0
-\mu^{-1}\Big(\widetilde g^{ab}\,R_{kabs}+\frac{2}{3}R_{kiis}-\Gamma_{ak}^c\Gamma_{cs}^a\Big)\Phi_{I-1}^k\,\partial_sw_0\\
&+\mu^{-2}\Big(2\,\nabla_K\mu\cdot\nabla_K\Phi_{I-1}^s+\mu\,\D_K\Phi_{I-1}^s\Big)\partial_s w_0
+2(h\mu)^{-1}\Big(\nabla_K h\cdot\,\nabla_K\Phi_{I-1}^s\Big)\partial_sw_0\\
&+2\mu^{-2}\Big(\nabla_K\mu\cdot\nabla_K\Phi_{I-1}^s\Big)(\bar\xi^i\,\partial_{is}^2w_0)
+\mu^{-2}\langle\nabla^NV,\Phi_0\rangle w_{I}
+\mu^{-2}\langle\nabla^NV,\Phi_{I-1}\rangle w_{1}\\
&+\mu^{-2}\langle\nabla^NV,\frac{\bar \xi}{\mu}\rangle \,w_{I}
+\mu^{-3}\partial^2_{kl}V(\e y,0)\,\Phi_{I-1}^l\,\bar\xi^k\,w_{0}
-p(p-1)w_0^{p-2}w_1w_{I}+G_{I+1}(\e y,\bar\xi ;
e),\\[3mm]
\mathfrak{E}_{I+1}
=&-p(p-1)w_0^{p-2}w_{I}
+\mu^{-2}\langle\nabla^NV,\Phi_{I-1}\rangle 
+\widetilde{\mathfrak{E}}_{I+1}(\e y,\bar\xi ;
e),
\end{align*}
and $\mathcal A_{I+1}^i$, $\mathcal B_{I+1}^{i\ell}$, $\mathcal C_{I+1}^i$, $\mathcal D_{I+1}^{ab}$, $\widetilde{\mathfrak{E}}_{I+1}$ and $G_{I+1}$ are smooth bounded functions on their variables and are Lipschitz continuous with respect to $e$.
\end{proposition}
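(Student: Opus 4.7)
The statement is a bookkeeping summary of the iterative Picard scheme carried out in Sections 3.2.1--3.2.3, so the plan is to run a finite induction on the order $\ell\in\{1,\dots,I\}$, applying Fredholm alternative at each step and reading off the terminal remainder. The base cases $\ell=1$ and $\ell=2$ are already performed in full: at order $\e$ the solvability of \eqref{eqw1} reduces precisely to the stationary condition \eqref{stationary} on $K$ and produces $w_1 = w_{1,1}+\sigma^{-1}\langle H,\Phi_0\rangle U_0$; at order $\e^2$ the Fredholm alternative forces \eqref{eq-phi0}, namely $\mathcal{J}_K\Phi_0 = c_0 H e$, uniquely solvable by the non-degeneracy of $K$, and $w_{2,1}$ is then defined by inverting $L_0$ on $(\ker L_0)^\perp$.

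For the inductive step, I assume $w_1,\dots,w_{j-1}$ and $\Phi_0,\dots,\Phi_{j-3}$ have already been constructed as smooth bounded functions of $(\e y,\z)$, Lipschitz in $e$, and collect the order-$\e^j$ terms of $\widetilde{S}_\e(v_I)$ following Section 3.2.3. Splitting $\widetilde{\mathfrak{F}}_j = S_{j,0}+S_j(\Phi_{j-2})$ and writing $w_j = w_{j,1}+\sigma^{-1}\langle H,\Phi_{j-1}\rangle U_0$ with $w_{j,1}\perp\ker L_0$, the Fredholm condition is obtained by projecting the $L_0 w_{j,1}$-equation against $\partial_s w_0$. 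The key identity
\[
\int_{\R^N} S_j(\Phi_{j-2})\,\partial_s w_0\, d\z = \mu^{-1}\Big(\int_{\R^N}|\partial_1 w_0|^2\,d\z\Big)(\mathcal{J}_K\Phi_{j-2})^s,
\]
derived exactly as for $j=2$ (using the test-function identities satisfied by $U_0$ and $U_j$ and the definition \eqref{J}), converts the solvability condition into a linear elliptic equation on the normal bundle of $K$ of the form $\mathcal{J}_K\Phi_{j-2} = \mathfrak{H}_j(\bar y;\Phi_0,\dots,\Phi_{j-3},e)$. By the non-degeneracy hypothesis on $K$, $\mathcal{J}_K$ is invertible, producing a unique $\Phi_{j-2}$, smooth and Lipschitz in $e$; then $w_{j,1}$ exists with the same regularity.

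To conclude the proposition, I take $j=I+1$, leave $\Phi_{I-1}$ and $e$ as free parameters (no equation is imposed on them at this stage, hence the prescription $w_{I+1}=0$, $\Phi_{I+1}=0$), and read off the remainder directly from the expansion of Section 3.2.3: the contribution of order $\e$ that cannot be cancelled is exactly $\e(-\e^2\mu^{-2}\D_K e+\lambda_0 e)Z$ (arising because $L_0 Z = \lambda_0 Z\neq 0$, so $Z$-modes lie outside the range of $L_0$), while the remaining $\e^{I+1}$ contributions group naturally into $\widetilde{\mathfrak{F}}_{I+1}$, $\mathfrak{E}_{I+1}\,e\,Z$, and the listed multipliers of $e$, $\nabla_K e$, $\nabla_K^2 e$ against $Z$, $\partial_i Z$, $\partial^2_{i\ell}Z$ with bounded coefficients smoothly depending on $(\bar y,\z)$. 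The $\mathcal{O}(\e^{I+2})$ remainder collects the tails of the geometric $R_q$-errors coming from Lemma \ref{lemovg} and Lemma \ref{lem:inverse-det} and the quadratic interactions among $w_1,\dots,w_I$.

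The main obstacle is verifying that at every order the projection of $\widetilde{\mathfrak{F}}_j$ against $\partial_s w_0$ produces exactly the same operator $\mathcal{J}_K$ acting on the new unknown $\Phi_{j-2}$, rather than some $j$-dependent perturbation of it. This requires tracking, via Proposition \ref{expansion-g}, which $\Phi_{j-2}$-derivatives appear linearly in $\widetilde{\mathfrak{F}}_j$, and observing that any extra $\Phi_{j-2}$-dependence inherited from $w_{j-1,2} = \sigma^{-1}\langle H,\Phi_{j-2}\rangle U_0$ either drops out after projection (by the oddness/evenness properties of $U_j$ and the identity for $L_0(\partial_j U_j)$ used at $j=2$) or is absorbed into the lower-order data $\mathfrak{H}_j$. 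Once this shape-preservation of the solvability equation is established, a single non-degeneracy hypothesis on $K$ unlocks the entire Picard chain up to arbitrary order $I$.
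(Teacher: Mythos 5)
Your proposal is correct and follows essentially the same route as the paper: Sections 3.2.1--3.2.3 carry out exactly the base cases and the inductive Picard step you describe, and the paper's own "proof" is precisely the one-line remark that the proposition follows by taking $j=I+1$, $w_{I+1}=0$, $\Phi_{I}=0$ in the iteration of Section 3.2.3. Your identification of the shape-preservation of the Jacobi operator $\mathcal{J}_K$ under projection of $S_j(\Phi_{j-2})$ against $\partial_s w_0$ as the key structural fact matches the paper's explicit claim for general $j\geq 3$.
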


\begin{remark}
For example, $\widetilde{\mathfrak{E}}_{I+1}$ involves the term $\mu^{-3}\partial_{kl}^2V(\e y,0)\,\Phi_{I-2}^l\,\bar\xi^k$.
\end{remark}


\subsection{Global approximation}

In the previous sections, some very accurate local approximate solution $v_I$ have been defined. 

Denote
\begin{equation*}
u_I(y,\xi)=h(\varepsilon y)v_I(y,\bar{\xi}),
\end{equation*}
in the Fermi coordinate. Since $K$ is compact, by the definition of Fermi coordinate, there is a constant $\delta>0$ such that the normal coordinate $x$ on $K_\e$ is well defined for $|x|<1000\delta/\e$. 

Now we can simply define our global approximation:
\begin{equation}\label{glob-aprox}
W(z)=\eta_{3\delta}^\varepsilon(x)u_I(y,\xi)
\quad\text{for}\ z\in M_\e,
\end{equation}
where $\eta_{\ell\delta}^\varepsilon(x):=\eta(\frac{\varepsilon|x|}{\ell\delta})$
and $\eta$ is a nonnegative smooth cutoff function such that 
$$\eta(t)=1\quad \hbox{if $|t|<1$}\quad \hbox{ and}\quad  \eta(t)=0 \quad \hbox{if $|t|>2$}.$$
It is easy to see that $W$ has the concentration property as required. Note that $W$ depends on the parameter functions $\Phi_{I-1}$ and $e$, thus we can write $W=W(\ \cdot\ ;\Phi_{I-1},e)$ and define the configuration space of $(\Phi_{I-1},e)$ by
\begin{align}\label{cond-phi-e}
\Lambda:=\left\{(\Phi_{I-1},e)\,\Big|\,
\begin{array}{ll}
\|\Phi_{I-1}\|_{C^{0,\alpha}(K)}+\|\nabla\Phi_{I-1}\|_{C^{0,\alpha}(K)}+\|\nabla^2\Phi_{I-1}\|_{C^{0,\alpha}(K)}\leq1,\\[3mm]
\|e\|_{C^{0,\alpha}(K)}+\e\|\nabla e\|_{C^{0,\alpha}(K)}+\e^2\|\nabla^2 e\|_{C^{0,\alpha}(K)}\leq1
\end{array}
\right\}.
\end{align}
Clearly, the configuration space $\Lambda$ is infinite dimensional. 

For $(\Phi_{I-1},e)\in\Lambda$, it is not difficult to show that for any $0<\tau<1$, there is a positive constant $C$ (independent of $\e$, $\Phi_{I-1}$, $e$) such that 
\begin{equation}\label{estimate-v}
|v_I(y,\bar \xi)|\leq C e^{-\tau|\bar \xi|},
\quad \forall\,(y,\bar \xi)\in K_\e\times\R^N.
\end{equation}


\section{An infinite dimensional reduction and the proof of Theorem~\ref{th:main}}

To construct the solutions stated in Theorem~\ref{th:main}, we will apply the so-called infinite dimensional reduction which can be seen as a generalization of the classical Lyapunov-Schmidt reduction in an infinite dimensional setting. It has been used in many constructions in PDE and geometric analysis. We present it here in a rather simple and synthetic way since it uses many ideas which have been developed by all the different authors working on this subject or on closely related problems. In particular, we are benefited from the ideas and tricks in \cite{dkw,P,wwy}. 



\subsection{Setting-up of the problem}

Given $(\Phi_{I-1},e)\in\Lambda$, we have defined a global approximate solution $W$. an infinite dimensional reduction will be applied to claim that there exist $\Phi_{I-1}$ and $e$ such that a small perturbation of the global approximation $W$ is a genuine solution.

For this purpose, we denote
\begin{align*}
E:=-\Delta_{g}W+V(\varepsilon z)W-W^p,
\end{align*}
\begin{align*}
L_\varepsilon[\phi]:=-\Delta_{g}\phi+V(\varepsilon z)\phi-pW^{p-1}\phi,
\end{align*}
and
\begin{align*}
N(\phi):=-\big[(W+\phi)^p-W^p-pW^{p-1}\phi\big].
\end{align*}
Obviously, $W+\phi$ is a solution of equation \eqref{eq-u} is equivalent to
\begin{align}\label{eq-phi-0}
L_\varepsilon[\phi]+E+N(\phi)=0.
\end{align}

To solve \eqref{eq-phi-0}, we  look for a solution $\phi$ of the form
\begin{equation*}
\phi:=\eta_{3\delta}^\varepsilon\phi^\sharp+\phi^\flat,
\end{equation*}
where $\phi^\flat:M_\varepsilon\rightarrow\mathbb{R}$ and $\phi^\sharp: K_\varepsilon\times\mathbb{R}^N\rightarrow\mathbb{R}$. This nice argument has been used in \cite{dkw,P,wwy} and is called {\it the gluing technique}. It seems rather counterintuitive, but this strategy will make the linear theory of $L_\e$ clear. 

An easy computation shows that 
\begin{equation*}
-L_\varepsilon[\phi]=
\eta_{3\delta}^{\varepsilon}\left(\Delta_g\phi^\sharp-V\phi^\sharp+pW^{p-1}\phi^\sharp\right)
+\Delta_g\phi^\flat-V\phi^\flat+pW^{p-1}\phi^\flat
+(\Delta_g\eta_{3\delta}^\varepsilon)\phi^\sharp+2\nabla_g\eta_{3\delta}^\varepsilon\cdot\nabla_g\phi^\sharp.
\end{equation*}
Therefore, $\phi$ is a solution of \eqref{eq-phi-0} if the pair $(\phi^\flat,\phi^\sharp)$ satisfies the following coupled system:
\begin{equation*}
\begin{cases}
\Delta_g\phi^\flat-V\phi^\flat
=-(\Delta_g\eta_{3\delta}^\varepsilon)\phi^\sharp-2\nabla_g\eta_{3\delta}^\varepsilon\cdot\nabla_g\phi^\sharp
+(1-\eta_\delta^\varepsilon)\Big[E+N(\eta_{3\delta}^\varepsilon\phi^\sharp+\phi^\flat)-pW^{p-1}\phi^\flat\Big],\\[3mm]
\eta_{3\delta}^{\varepsilon}\left(\Delta_g\phi^\sharp-V\phi^\sharp+pW^{p-1}\phi^\sharp\right)
=\eta_\delta^\varepsilon\Big[E+N(\eta_{3\delta}^\varepsilon\phi^\sharp+\phi^\flat)-pW^{p-1}\phi^\flat\Big].
\end{cases}
\end{equation*}

In order to solve the above system,  we first define 
\begin{equation}
L_\e^\flat[\phi^\flat]
:=\Delta_g\phi^\flat-V\phi^\flat
\quad\text{on}\ M_\varepsilon,
\end{equation}
and note that it is a strongly coercive operator thanks to the conditions on the potential $V$, see \eqref{cond-V}. Then, in the support of $\eta_{3\delta}^\varepsilon$, we define
\begin{equation*}
\phi^\sharp:=h(\varepsilon y)\phi^*(y,\bar{\xi}), \qquad \hbox{with \quad  $\phi^*: K_\varepsilon\times\mathbb{R}^N\rightarrow\mathbb{R}$}.
\end{equation*}
A straightforward computation as in  Subsection 3.1 yields
\begin{equation*}
\eta_{3\delta}^{\e}\Big(\Delta_g\phi^\sharp-V\phi^\sharp+pW^{p-1}\phi^\sharp\Big)
=\eta_{3\delta}^{\e}h^p\Big(\Delta_{\mathbb{R}^N}\phi^*+\mu^{-2}\Delta_{K_\varepsilon}\phi^*-\phi^*+(\eta_{3\delta}^\varepsilon)^{p-1}pv_I^{p-1}\phi^*+\widetilde{B}[\phi^*]\Big). 
\end{equation*}
where $\widetilde{B}=\mathcal{O}(\e)$ is a linear operator defined in Subsection 3.1. Now we extend the linear operator $\widetilde{B}$ to $K_\e\times\mathbb{R}^N$ and we define 
\begin{align*}
\mathbb{L}_\e[\phi^*]
:=\Delta_{\mathbb{R}^N}\phi^*+\mu^{-2}\Delta_{K_\varepsilon}\phi^*-\phi^*+(\eta_{3\delta}^\varepsilon)^{p-1}pv_I^{p-1}\phi^*+\eta_{6\delta}^\varepsilon\widetilde{B}[\phi^*]
\quad\text{on}\ K_\varepsilon\times\mathbb{R}^N,
\end{align*}
and 
\begin{align*}
L_\e^*[\phi^*]
:=\Delta_{\mathbb{R}^N}\phi^*+\mu^{-2}\Delta_{K_\varepsilon}\phi^*-\phi^*+pw_0^{p-1}\phi^*
=-L_0[\phi^*]+\mu^{-2}\Delta_{K_\varepsilon}\phi^*
\quad\text{on}\ K_\varepsilon\times\mathbb{R}^N.
\end{align*}
Since $\eta_{3\delta}^\varepsilon\cdot\eta_{\delta}^\varepsilon=\eta_{\delta}^\varepsilon$ and $\eta_{3\delta}^\varepsilon\cdot\eta_{6\delta}^\varepsilon=\eta_{3\delta}^\varepsilon$, $\phi$ is a solution of \eqref{eq-phi-0} if the pair $(\phi^\flat,\phi^*)$ solves the following coupled system:
\begin{equation*}\label{gluing sys-2}
\begin{cases}
L_\e^\flat[\phi^\flat]
=-(\Delta_g\eta_{3\delta}^\varepsilon)h\phi^*-2\nabla_g\eta_{3\delta}^\varepsilon\cdot\nabla_g(h\phi^*)
+(1-\eta_\delta^\varepsilon)
\Big[E+N(\eta_{3\delta}^\varepsilon\phi^\sharp+\phi^\flat)-pW^{p-1}\phi^\flat\Big],\\[3mm]
L_\e^*[\phi^*]
=\eta_\delta^\varepsilon\,h^{-p}\Big[E+N(\eta_{3\delta}^\varepsilon h\phi^*+\phi^\flat)-pW^{p-1}\phi^\flat\Big]-(\mathbb{L}_\e-L_\e^*)[\phi^*].
\end{cases}
\end{equation*}

\

\noindent It is easy to check that
\begin{align*}
-(\Delta_g\eta_{3\delta}^\varepsilon)h\phi^*
-2\nabla_g\eta_{3\delta}^\varepsilon\cdot\nabla_g(h\phi^*)
=(1-\eta_\delta^\varepsilon)
\Big[-(\Delta_g\eta_{3\delta}^\varepsilon)h\phi^*
-2\nabla_g\eta_{3\delta}^\varepsilon\cdot\nabla_g(h\phi^*)\Big]
\end{align*}
and 
\begin{align*}
(1-\eta_\delta^\e)
=(1-\eta_\delta^\e)(1-\eta_{\delta/2}^\e). 
\end{align*}
Now, we define 
\begin{align*}
\mathcal N_\e(\phi^\flat,\phi^*,\Phi_{I-1},e)
:=&-(\Delta_g\eta_{3\delta}^\varepsilon)h\phi^*
-2\nabla_g\eta_{3\delta}^\varepsilon\cdot\nabla_g(h\phi^*)\\
&+(1-\eta_{\delta/2}^\varepsilon)
\Big[E+N(\eta_{3\delta}^\varepsilon\phi^\sharp+\phi^\flat)-pW^{p-1}\phi^\flat\Big],
\end{align*}
and
\begin{align*}
\mathcal M_\e(\phi^\flat,\phi^*,\Phi_{I-1},e)
:=\eta_\delta^\varepsilon\,h^{-p}\Big[E+N(\eta_{3\delta}^\varepsilon h\phi^*+\phi^\flat)-pW^{p-1}\phi^\flat\Big]-(\mathbb{L}_\e-L_\e^*)[\phi^*].
\end{align*}
Then $W+\phi$ is a solution of equation \eqref{eq-u} if $(\phi^\flat,\phi^*,\Phi_{I-1},e)$ solves the following system: 
\begin{equation}\label{eq-phi1-phi2}
\begin{cases}
L_\e^\flat[\phi^\flat]
=(1-\eta_\delta^\varepsilon)\,\mathcal  N_\e(\phi^\flat,\phi^*,\Phi_{I-1},e),\\[3mm]
L_\e^*[\phi^*]
=\mathcal  M_\e(\phi^\flat,\phi^*,\Phi_{I-1},e).
\end{cases}
\end{equation}

To solve the above system \eqref{eq-phi1-phi2}, we first study the linear theory : on one hand, since the operator  $L_\e^\flat$ is strongly coercive, then we have the solvability of equation $L_\e^\flat[\phi^\flat]=\psi$.  On the other hand, one can check at once that $L_\e^*$ has bounded kernels, e.g., $\partial_j w_0$, $j=1,\dots,N$. Actually, since $L_0$ has a negative eigenvalue $\lambda_0$ with the corresponding eigenfunction $Z$, there may be more bounded kernels of $L_\e^*$. 

Let $\psi$ be a function defined on $K_\e\times\mathbb{R}^N$, we define $\Pi$ to be the $L^2(d\bar\xi)$-orthogonal projection on $\partial_j w_0$'s and $Z$, namely
\begin{equation}
\Pi[\psi]:=\Big(\Pi_1[\psi],\dots,\Pi_N[\psi],\Pi_{N+1}[\psi]\Big),
\end{equation}
where for $j=1,\dots,N$, 
\[
\Pi_{j}[\psi]:=\frac{1}{c_0}\int_{\mathbb{R}^{N}}\psi(y,\bar{\xi})\,\partial_jw_0(\bar{\xi})\,d\bar{\xi},
\quad\text{with}\ 
c_0=\int_{\mathbb{R}^{N}}|\partial_1w_0|^2\,d\bar{\xi},
\]
and 
\[
\Pi_{N+1}[\psi]:=\int_{\mathbb{R}^{N}}\psi(y,\bar{\xi})\,Z(\bar{\xi})\,d\bar{\xi}.
\]
Let us also denote by $\Pi^\perp$ the orthogonal projection on the orthogonal of $\partial_j w_0$'s and $Z$, namely
\[
\Pi^\perp[\psi]:=\psi-\sum_{j=1}^{N}\Pi_j[\psi]\,\partial_j w_0-\Pi_{N+1}[\psi]\,Z.
\]

With these notations, as in the Lyapunov-Schmidt reduction, solving the system~\eqref{eq-phi1-phi2} amounts to solving the system
\begin{equation}\label{eq-phi1-phi2-1}
\begin{cases}
L_\e^\flat[\phi^\flat]
=(1-\eta_\delta^\varepsilon)\mathcal  N_\e(\phi^\flat,\phi^*,\Phi_{I-1},e),\\[3mm]
L_\e^*[\phi^*]
=\Pi^\perp\Big[\mathcal  M_\e(\phi^\flat,\phi^*,\Phi_{I-1},e)\Big],\\[3mm]
\Pi\Big[\mathcal  M_\e(\phi^\flat,\phi^*,\Phi_{I-1},e)\Big]=0.
\end{cases}
\end{equation}
It is to see that one can write 
\begin{align*}
E
=\eta_{3\delta}^\e\,h^p\,\widetilde S_\e(v_{I})
-(\Delta_g \eta_{3\delta}^\e)(h v_I)
-2(\nabla_g \eta_{3\delta}^\e)\cdot\nabla_g(h v_I)
-\eta_{3\delta}^\e\Big[(\eta_{3\delta}^\e)^{p-1}-1\Big]h^p v_I^p.
\end{align*}
Hence by Proposition~\ref{prop-local approximation}, 
\begin{align*}
\mathcal M_\e(\phi^\flat,\phi^*,\Phi_{I-1},e)
=&\e\big(-\e^2\mu^{-2}\D_K e+\lambda_0e \big)Z
+\e^{I+1}S_{I+1}(\Phi_{I-1})\\
&+\e^{I+1}G_{I+1}(\e y,\bar\xi;e)
+\e^{I+2}J_{I+1}(\e y,\bar\xi;\Phi_{I-1},e)\\
&+\eta_\delta^\varepsilon\,h^{-p}\Big[N(\eta_{3\delta}^\varepsilon h\phi^*+\phi^\flat)-pW^{p-1}\phi^\flat\Big]-(\mathbb{L}_\e-L_\e^*)[\phi^*].
\end{align*}

On the other hand, since 
\begin{align}
\int_{\R^N} S_{I+1}(\Phi_{I-1})\,\partial_s w_0
=c_0\mu^{-1}(\mathcal{J}_K\Phi_{I-1})^s(\e y),
\end{align}
by some rather tedious and technical computations, one can show that 
\begin{align}\label{sys-phi-e}
\Pi\Big[\mathcal  M_\e(\phi^\flat,\phi^*,\Phi_{I-1},e)\Big]=0
\Longleftrightarrow
\left\{
\begin{array}{ll}
\e^{I+1}\mathcal{J}_K[\Phi_{I-1}]
=\e^{I+1}\mathfrak H_{I+1}(\bar y;e)+\mathcal  M_{\varepsilon,1}(\phi^\flat,\phi^*,\Phi_{I-1},e);
\\[3mm]
\e\,\mathcal{K}_\varepsilon[e]
=\mathcal  M_{\varepsilon,2}(\phi^\flat,\phi^*,\Phi_{I-1},e),
\end{array}
\right.
\end{align}
where $\mathfrak H_{I+1}(\bar y;e)$ is a smooth bounded function on $\bar y$ and is Lipschitz continuous with respect to $e$, $\mathcal{J}_K$ is the Jacobi operator on $K$, and $\mathcal{K}_\varepsilon$ is a Schr\"odinger operator defined by
\begin{equation}
\mathcal{K}_\varepsilon[e]
:=-\varepsilon^2\Delta_Ke+\lambda_0\mu^2e
\end{equation}
where $\lambda_0$ is the unique negative eigenvalue of $L_0$. 

We summarize the above discussion by saying that the function
\[
u=W(\ \cdot\ ;\Phi_{I-1},e)+\eta_{3\delta}^\varepsilon\,h\,\phi^*+\phi^\flat,
\]
is a solution of the equation
\[
\Delta_g u-V(\e z)u+u^p=0,
\]
if the functions $\phi^\flat$, $\phi^*$, $\Phi_{I-1}$ and $e$ satisfy the following system
\begin{equation}\label{reduce system-final}
\begin{cases}
L_\e^\flat[\phi^\flat]
=(1-\eta_\delta^\varepsilon)\,\mathcal  N_\e(\phi^\flat,\phi^*,\Phi_{I-1},e),\\[3mm]
L_\e^*[\phi^*]
=\Pi^\perp\Big[\mathcal  M_\e(\phi^\flat,\phi^*,\Phi_{I-1},e)\Big],\\[3mm]
\e^{I+1}\mathcal{J}_K[\Phi_{I-1}]
=\e^{I+1}\mathfrak H_{I+1}(\bar y;e)
+\mathcal  M_{\varepsilon,1}(\phi^\flat,\phi^*,\Phi_{I-1},e),
\\[3mm]
\e\,\mathcal{K}_\varepsilon[e]
=\mathcal  M_{\varepsilon,2}(\phi^\flat,\phi^*,\Phi_{I-1},e).
\end{cases}
\end{equation}

\begin{remark} 
\begin{enumerate}
\item In general there are two different approaches  to set-up the problem: the first one, as used in  \cite{dkw} and \cite{wwy}, consists in solving first the equations of $\phi^\flat$ and $\phi^*$ for fixed $\Phi_{I-1}$ and $e$, and then solve the left equations of $\Phi_{I-1}$ and $e$. The second one, as in \cite{mm,mal} consists in solving first  the linear problem $L_\varepsilon[\phi]+\psi=0$ under some non-degeneracy and gap conditions; and then  solve the nonlinear problem $L_\varepsilon[\phi]+E+N(\phi)=0$ by using a fixed point arguments.  

Our approach is slightly different from those in \cite{dkw}-\cite{wwy}  and \cite{mm}-\cite{mal}.\\

\item After solving the system~\eqref{reduce system-final}, one can prove the positivity of $u$ by contradiction since both $\phi^\flat$ and $\phi^*$ are small. 
\end{enumerate}


\end{remark}



\subsection{Analysis of the linear operators}

By the above analysis, what is left is to show that \eqref{reduce system-final} has a solution. To this end, we will apply a fixed point theorem. Before we do this, a linear theory will be developed. 

\subsubsection{Analysis of a strongly coercive operator}


To deal with the term $-\eta_\delta^\varepsilon\,h^{-p}pW^{p-1}\phi^\flat$ in $\mathcal M_\e(\phi^\flat,\phi^*,\Phi_{I-1},e)$ in applying a fixed point theorem, one needs to choose norms with the property that $\mathcal M_\e(\phi^\flat,\phi^*,\Phi_{I-1},e)$ depends slowly on $\phi^\flat$. To this end, we define 
\begin{equation}
\|\phi^\flat\|_{\e,\infty}=\|(1-\eta_{\delta/4}^\e)\phi^\flat\|_\infty+\frac{1}{\e}\|\eta_{\delta/4}^\e \phi^\flat\|_\infty.
\end{equation}
With this notation, by the exponential decay of $W$, we have 
\begin{equation*}
\|\mathcal M_\e(\phi^\flat,\phi^*,\Phi_{I-1},e)\|_\infty
\leq C\e\|\phi^\flat\|_{\e,\infty}
\end{equation*}
and
\begin{equation*}
\|\mathcal M_\e(\phi^\flat_1,\phi^*,\Phi_{I-1},e)-\mathcal M_\e(\phi^\flat_2,\phi^*,\Phi_{I-1},e)\|_\infty
\leq C\e\|\phi^\flat_1-\phi^\flat_2\|_{\e,\infty}.
\end{equation*}

Since \eqref{cond-V}, we have the following lemma. 
\begin{lemma}
For any function $\psi(z)\in L^\infty(M_\e)$, there is a unique bounded solution $\phi$ of
\begin{equation}
L_\e^\flat[\phi]
=(1-\eta_{\delta}^\varepsilon)\psi.
\end{equation}
Moreover, there exists a constant $C>0$ (independent of $\varepsilon$) such that
\begin{equation}
\|\phi\|_{\e,\infty}\leq C\|\psi\|_\infty.
\end{equation}
\end{lemma}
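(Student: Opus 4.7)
The plan is to exploit the strong coercivity of $L_\varepsilon^\flat=\Delta_g-V$ — which is immediate from $V\ge V_1>0$ — to get existence, uniqueness, and a global $L^\infty$ bound, and then to sharpen the bound inside the inner tube by a radial barrier argument that captures the exponential smallness arising from the fact that $(1-\eta_\delta^\varepsilon)\psi$ vanishes on a neighborhood of $K_\varepsilon$ of width $\sim 1/\varepsilon$.

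For existence I would apply Lax--Milgram to the coercive bilinear form
\[
a(\phi,\varphi)=\int_{M_\varepsilon}\bigl(\langle\nabla_g\phi,\nabla_g\varphi\rangle_g+V\phi\varphi\bigr)\,dvol_g
\]
on $H^1(M_\varepsilon)$; coercivity, uniform in $\varepsilon$, follows at once from \eqref{cond-V}, and the right-hand side $(1-\eta_\delta^\varepsilon)\psi$ defines a bounded linear functional on $H^1$ (the Euclidean case $M=\mathbb{R}^n$ is handled identically since the mass term $V\phi^2$ controls $L^2$ on all of $\mathbb{R}^n$). Elliptic regularity promotes the weak solution to a classical one, and uniqueness is immediate from coercivity. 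The weak maximum principle applied to $-\Delta_g+V$ then gives $\|\phi\|_{L^\infty(M_\varepsilon)}\le V_1^{-1}\|\psi\|_\infty$, which already controls the outer component $\|(1-\eta_{\delta/4}^\varepsilon)\phi\|_\infty$ of the norm.

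The heart of the matter is the inner estimate $\varepsilon^{-1}\|\eta_{\delta/4}^\varepsilon\phi\|_\infty\le C\|\psi\|_\infty$. The key observation is that on the tube $\Omega:=\{|x|<\delta/\varepsilon\}$ the forcing $(1-\eta_\delta^\varepsilon)\psi$ vanishes identically, so $\phi$ solves the homogeneous equation $-\Delta_g\phi+V\phi=0$ there. I would construct the radial barrier
\[
\beta(x):=V_1^{-1}\|\psi\|_\infty\,\frac{\cosh(c|x|)}{\cosh(c\delta/\varepsilon)}
\]
for a fixed $c$ with $0<c<\sqrt{V_1/N}$. A direct computation in $\mathbb{R}^N$ gives $-\Delta_{\mathbb{R}^N}\beta+V_1\beta\ge(V_1-Nc^2)\beta>0$, and by Proposition~\ref{expansion-g} the full operator $-\Delta_g+V$ applied to this radial function differs from $-\Delta_{\mathbb{R}^N}+V_1$ only by terms of order $\varepsilon$ or $\delta^2$ times $\beta$, uniformly in $\Omega$, because $\varepsilon|x|\le\delta$ throughout $\Omega$. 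Hence $\beta$ is a supersolution of $-\Delta_g+V$ in $\Omega$ once $\varepsilon,\delta$ are small, and since $\beta\equiv V_1^{-1}\|\psi\|_\infty\ge|\phi|$ on $\partial\Omega$, the maximum principle yields $|\phi|\le\beta$ in $\Omega$. On the support of $\eta_{\delta/4}^\varepsilon$ one has $|x|\le\delta/(2\varepsilon)$, so $\beta(x)\le 2V_1^{-1}\|\psi\|_\infty e^{-c\delta/(2\varepsilon)}$, and consequently $\varepsilon^{-1}\|\eta_{\delta/4}^\varepsilon\phi\|_\infty=o(1)\|\psi\|_\infty$ as $\varepsilon\to 0$.

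Combining the two regimes produces $\|\phi\|_{\varepsilon,\infty}\le C\|\psi\|_\infty$ with $C$ independent of $\varepsilon$. The main technical obstacle is the verification of the supersolution property of $\beta$ uniformly in $\varepsilon$: one must track the polynomial-in-$|x|$ coefficients in Proposition~\ref{expansion-g} together with the Taylor remainder in $V(\varepsilon z)-V(\varepsilon y,0)$, and check that they can all be absorbed into the positive gap $V_1-Nc^2>0$ once $\varepsilon$ and $\delta$ are small. This is elementary given the explicit form of the expansion but requires careful bookkeeping.
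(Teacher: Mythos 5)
The paper states this lemma without proof, attributing it to strong coercivity of $L_\e^\flat$ under the hypothesis \eqref{cond-V}; your blind reconstruction supplies a complete and essentially standard argument of exactly the kind the authors implicitly rely on (it parallels what is done in \cite{dkw} and \cite{wwy} for analogous operators). The barrier argument is the right idea and the computation is sound: the forcing indeed vanishes on $\{|x|<\delta/\e\}$, the radial comparison function $\beta(x)=V_1^{-1}\|\psi\|_\infty\cosh(c|x|)/\cosh(c\delta/\e)$ with $0<c<\sqrt{V_1/N}$ is a supersolution of $-\Delta_{\R^N}+V_1$ with positive margin $V_1-Nc^2$, and since all corrections coming from Proposition~\ref{expansion-g} contribute terms of order $\e$, $\e\delta$, or $\delta^2$ times $\beta$ on the tube $\{\e|x|\le\delta\}$ (using $|\nabla\beta|\le c\beta$, $|\nabla^2\beta|\le Cc^2\beta$), they are absorbed once $\delta$ is chosen small at the outset. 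Comparison on $\partial\Omega$ then gives $|\phi|\le\beta$, and $\beta\le 2V_1^{-1}\|\psi\|_\infty e^{-c\delta/(2\e)}$ on the support of $\eta_{\delta/4}^\e$, which certainly dominates the $\e^{-1}$ weight. So the inner estimate — the heart of the lemma — is correct and even stronger than required.

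The one step that does not quite go through as written is the invocation of Lax--Milgram in the Euclidean case $M=\R^n$. You argue that the Euclidean case is ``handled identically since the mass term $V\phi^2$ controls $L^2$'', which indeed gives coercivity of the bilinear form, but it does not address the other hypothesis of Lax--Milgram: for $\psi\in L^\infty(\R^n)$, the right-hand side $(1-\eta_\delta^\e)\psi$ is bounded but supported on the unbounded exterior of a tube, hence in general not in $L^2(\R^n)$, so the map $\varphi\mapsto\int(1-\eta_\delta^\e)\psi\,\varphi$ is not a bounded functional on $H^1(\R^n)$. (Taking $\psi\equiv 1$ shows the resulting $\phi$ is itself not in $H^1$.) The fix is routine — solve on exhausting balls $B_R$ with Dirichlet data, use the maximum principle to get the uniform bound $\|\phi_R\|_{L^\infty}\le V_1^{-1}\|\psi\|_\infty$ independent of $R$, pass to the limit by elliptic estimates, and prove uniqueness of the bounded solution by the maximum principle applied on $\R^n$ — but it is worth stating, since the variational formulation genuinely fails in this regime. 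With that adjustment the proof is complete and correct.
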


For $\phi^\flat\in C_0^{0,\alpha}(M_\e)$, we define 
\begin{equation}
\|\phi^\flat\|_{\e,\alpha}=\|(1-\eta_{\delta/4}^\e)\phi^\flat\|_{C_0^{0,\alpha}}+\frac{1}{\e}\|\eta_{\delta/4}^\e \phi^\flat\|_{C_0^{0,\alpha}}.
\end{equation}
As a consequence of standard elliptic estimates, the following lemma holds. 
\begin{lemma}
For any function $\psi\in C_0^{0,\alpha}(M_\e)$, there is a unique solution $\phi\in C_0^{2,\alpha}(M_\e)$ of
\begin{equation}
L_\e^\flat[\phi]
=(1-\eta_{\delta}^\varepsilon)\psi.
\end{equation}
Moreover, there exists a constant $C>0$ (independent of $\varepsilon$) such that
\begin{equation}
\|\phi\|_{2,\e,\alpha}:=\|\phi\|_{\e,\alpha}+\|\nabla\phi\|_{\e,\alpha}
+\|\nabla^2\phi\|_{\e,\alpha}\leq C\|\psi\|_{C_0^{2,\alpha}(M_\e)}.
\end{equation}
\end{lemma}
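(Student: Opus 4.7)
\medskip

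\noindent\textbf{Proof plan.} The strategy is to upgrade the previous $L^\infty$ lemma to the $C^{2,\alpha}$ setting by combining (i) the solvability and $L^\infty$ bound already available, (ii) standard interior Schauder estimates applied in uniformly bounded balls in the scaled manifold $M_\e$, and (iii) an exponential decay argument that explains the factor $1/\e$ in front of the inner piece of the weighted norm.

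First I would invoke the previous $L^\infty$ lemma to produce the unique bounded solution $\phi$ of $L_\e^\flat[\phi]=(1-\eta_\delta^\e)\psi$, together with the bound $\|\phi\|_{\e,\infty}\leq C\|\psi\|_\infty$. Uniqueness in the $C^{2,\alpha}$ class then follows a fortiori, and the bound $\|(1-\eta_{\delta/4}^\e)\phi\|_\infty \leq C\|\psi\|_\infty$ is already part of $\|\phi\|_{\e,\infty}$.

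Next I would promote this to Hölder regularity. In the $\e$-scaled Fermi coordinates on $M_\e$ the metric coefficients and $V(\e z)$ have derivatives uniformly bounded with respect to $\e$ (indeed derivatives in $y$ bring down powers of $\e$), so $L_\e^\flat$ is a uniformly elliptic operator with coefficients uniformly bounded in $C^{0,\alpha}$ on every Euclidean ball of a fixed unit radius in the scaled coordinates. Standard interior Schauder estimates then yield, for each $z\in M_\e$,
\begin{equation*}
\|\phi\|_{C^{2,\alpha}(B_1(z))}\leq C\bigl(\|\phi\|_{L^\infty(B_2(z))}+\|(1-\eta_\delta^\e)\psi\|_{C^{0,\alpha}(B_2(z))}\bigr),
\end{equation*}
with $C$ independent of $\e$ and $z$. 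Covering $M_\e$ by such balls and taking the supremum over $z$ lying in the region $\{|x|>\delta/(4\e)\}$ (where the cut-off $1-\eta_{\delta/4}^\e$ equals one near the center of the ball) produces the unweighted part of the bound $\|(1-\eta_{\delta/4}^\e)\phi\|_{C^{2,\alpha}}\leq C\|\psi\|_{C^{0,\alpha}}$.

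Finally I would handle the inner piece, where the $\e^{-1}$ weight sits, via a barrier argument. On the set $\{|x|<\delta/\e\}$ the right-hand side $(1-\eta_\delta^\e)\psi$ vanishes identically, so $\phi$ solves the homogeneous equation $\Delta_g\phi-V\phi=0$ there. Since $V\geq V_1>0$, a standard comparison with a radial exponential barrier of the form $C\|\psi\|_\infty\exp(-c_1\,\mathrm{dist}(z,\{|x|=\delta/\e\}))$ yields
\begin{equation*}
|\phi(z)|\leq C\|\psi\|_\infty\, e^{-c_1/\e}\quad\text{on}\ \{|x|<\delta/(4\e)\},
\end{equation*}
which is far stronger than the required $\e$-loss, and by applying Schauder estimates again on the homogeneous equation this decay also controls $\nabla\phi$ and $\nabla^2\phi$ in that region. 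Summing the two pieces and using $e^{-c_1/\e}\ll\e$ delivers the desired bound $\|\phi\|_{2,\e,\alpha}\leq C\|\psi\|_{C^{0,\alpha}_0(M_\e)}$ (the exponent $C_0^{2,\alpha}$ appearing on the right of the stated inequality is a typographical slip for $C_0^{0,\alpha}$).

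The only mildly delicate point is the $\e$-independence of the Schauder constants; this is routine once one works in the scaled Fermi coordinates where the coefficients of $L_\e^\flat$ are bounded in $C^{k,\alpha}$ uniformly in $\e$. The exponential-decay step is essentially free thanks to the strong coercivity hypothesis $V\geq V_1>0$.
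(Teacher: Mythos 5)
Your argument is essentially what the paper intends by "standard elliptic estimates," and the overall structure — existence/uniqueness and the $L^\infty$ bound from the previous lemma, interior Schauder estimates on unit balls of $M_\e$ with constants uniform in $\e$ (legitimate because in scaled coordinates the coefficients of $L_\e^\flat$ are uniformly $C^{k,\alpha}$), and a comparison argument exploiting $V\geq V_1>0$ to control the inner weighted piece — is sound. Note that the comparison step is genuinely needed and not cosmetic: the weighted estimate $\frac1\e\|\eta_{\delta/4}^\e\phi\|_\infty\leq C\|\psi\|_\infty$ from the earlier lemma only controls $|\phi|$ pointwise where $\eta_{\delta/4}^\e\equiv1$, whereas Schauder applied on the support of $\eta_{\delta/4}^\e$ sees balls $B_2(z)$ that spill into the annulus $\{\delta/(4\e)<|x|<\delta/(2\e)+2\}$, where one has only the unweighted $L^\infty$ bound; your barrier is what upgrades it there.

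One technical point that should be fixed: for codimension $N=n-k\geq2$ the function $v=C\|\psi\|_\infty\exp\bigl(-c_1(\delta/\e-|x|)\bigr)$ is \emph{not} a supersolution of $\Delta_g v-Vv\leq0$ near $|x|=0$, since $\Delta_g|x|\sim(N-1)/|x|$ blows up and contributes a large positive term $c_1(N-1)v/|x|$; moreover $v$ is only Lipschitz (not $C^2$) across $|x|=0$. The remedy is standard: take instead $v=C\|\psi\|_\infty\cosh(c_1|x|)/\cosh(c_1\delta/\e)$, which is smooth, equals $C\|\psi\|_\infty$ on $\{|x|=\delta/\e\}$, and satisfies $\Delta_g v-Vv\leq(Nc_1^2-V_1+O(\e))\,v\leq0$ for $c_1^2\leq V_1/(2N)$ and $\e$ small; it yields the same exponential smallness on $\{|x|<\delta/(2\e)+2\}$. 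With that substitution your proof is complete. You are also correct that the $\|\psi\|_{C_0^{2,\alpha}(M_\e)}$ on the right-hand side of the stated estimate is a typographical slip for $\|\psi\|_{C_0^{0,\alpha}(M_\e)}$, consistent with the hypothesis $\psi\in C_0^{0,\alpha}(M_\e)$.
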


\subsubsection{Study of the model linear operator $L_\e^*$}

First, we will prove an injectivity result which is the key result. Then, we will use this result to obtain an a priori estimate and the existence result for solutions of $L_\e^*[\phi]=\psi$ when $\Pi[\phi]=0=\Pi[\psi]$. 

\begin{lemma}[The injectivity result]\label{lemma-injectivity}
Suppose that $\phi\in L^\infty(K_\varepsilon\times \mathbb{R}^{N})$ satisfies $L_\e^*[\phi]=0$ and $\Pi[\phi]=0$.
Then $\phi\equiv0$.
\end{lemma}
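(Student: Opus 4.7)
The plan is a contradiction argument combined with a blow-up/concentration compactness analysis. The essential input is the spectral structure of $L_0=-\Delta_{\mathbb{R}^N}+1-pw_0^{p-1}$ described in Proposition~\ref{prop-w0}: the only eigenvalues of $L_0$ in $(-\infty,\gamma_0)$ are $\lambda_0<0$ (simple, eigenfunction $Z$) and $0$ (with $N$-dimensional eigenspace $\mathrm{Span}\{\partial_1w_0,\dots,\partial_Nw_0\}$). On the $L^2(\mathbb{R}^N)$-orthogonal complement of these eigenfunctions, $L_0$ is bounded below by $\gamma_0>0$ thanks to \eqref{inequality-orth}. The projection conditions $\Pi[\phi]=0$ are precisely designed to kill these directions fiberwise in $\bar\xi$.

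Suppose, by contradiction, that for some sequence $\e_n\to0$ there exist $\phi_n\in L^\infty(K_{\e_n}\times\mathbb{R}^N)$ satisfying $L_{\e_n}^*[\phi_n]=0$, $\Pi[\phi_n]=0$, and, after normalization, $\|\phi_n\|_\infty=1$. Choose points $(y_n,\bar\xi_n)\in K_{\e_n}\times\mathbb{R}^N$ with $|\phi_n(y_n,\bar\xi_n)|\geq 1/2$ and, up to a subsequence, set $\bar y_0:=\lim \e_n y_n\in K$ and $\mu_0:=\mu(\bar y_0)$. Using Fermi coordinates on $K$ centered at $\bar y_0$ and identifying a tubular neighborhood with a neighborhood of the origin in $\mathbb{R}^k$, define the translates $\tilde\phi_n(y,\bar\xi):=\phi_n(y_n+y,\bar\xi)$ on balls of radius $\to\infty$ in $\mathbb{R}^k\times\mathbb{R}^N$.

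There are two cases. \emph{Case 1:} $\{\bar\xi_n\}$ is bounded. By standard elliptic estimates and a diagonal argument, $\tilde\phi_n\to\phi_\infty$ in $C^2_{\mathrm{loc}}(\mathbb{R}^k\times\mathbb{R}^N)$ where $\phi_\infty$ is bounded by $1$, nontrivial, satisfies the pointwise orthogonality
\begin{equation*}
\int_{\mathbb{R}^N}\phi_\infty(y,\bar\xi)\,\partial_j w_0(\bar\xi)\,d\bar\xi=0=\int_{\mathbb{R}^N}\phi_\infty(y,\bar\xi)\,Z(\bar\xi)\,d\bar\xi,\qquad\forall y\in\mathbb{R}^k,\ j=1,\dots,N,
\end{equation*}
and solves the limit equation
\begin{equation*}
\mu_0^{-2}\Delta_{\mathbb{R}^k}\phi_\infty+\Delta_{\mathbb{R}^N}\phi_\infty-\phi_\infty+pw_0^{p-1}\phi_\infty=0.
\end{equation*}
Multiplying by a product cutoff $\chi_R(y)^2\phi_\infty$ and integrating by parts over $\mathbb{R}^k\times\mathbb{R}^N$, the $y$-derivative terms give a nonnegative contribution, while the orthogonality conditions allow us to apply the coercivity estimate \eqref{inequality-orth} fiberwise in $\bar\xi$, producing
\begin{equation*}
\gamma_0\int\chi_R^2\,\phi_\infty^2\,dy\,d\bar\xi\leq C_R^{-1}\|\phi_\infty\|_\infty^2\to 0
\end{equation*}
as $R\to\infty$, forcing $\phi_\infty\equiv 0$. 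Equivalently (and more transparently), one may take the Fourier transform in $y$, obtaining $L_0\hat\phi_\infty(\lambda,\cdot)=-\mu_0^{-2}|\lambda|^2\hat\phi_\infty(\lambda,\cdot)$; since $-\mu_0^{-2}|\lambda|^2\leq0$ while the spectrum of $L_0$ restricted to $\{Z,\partial_jw_0\}^\perp$ is contained in $[\gamma_0,\infty)$, the orthogonality forces $\hat\phi_\infty\equiv 0$. Either way we reach a contradiction.

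\emph{Case 2:} $|\bar\xi_n|\to\infty$. Translate additionally in $\bar\xi$: set $\tilde\phi_n(y,\bar\xi):=\phi_n(y_n+y,\bar\xi_n+\bar\xi)$. By the exponential decay $w_0(\bar\xi_n+\bar\xi)\to 0$ locally uniformly, the limit $\phi_\infty$ is bounded, nontrivial, and solves
\begin{equation*}
\mu_0^{-2}\Delta_{\mathbb{R}^k}\phi_\infty+\Delta_{\mathbb{R}^N}\phi_\infty-\phi_\infty=0
\end{equation*}
on $\mathbb{R}^k\times\mathbb{R}^N$. A standard Liouville argument (the symbol $-\mu_0^{-2}|\lambda_1|^2-|\lambda_2|^2-1$ never vanishes, or equivalently, a bounded supremum yields a sign contradiction via the maximum principle after a blow-up at a near-maximum point) rules this out, yielding $\phi_\infty\equiv 0$, again a contradiction.

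The main technical obstacle is a clean justification of the limit procedure, particularly handling the spatial dependence of the coefficients $\mu^{-2}(\e y)$ on the slow variable and ensuring that the orthogonality $\Pi[\phi_n]=0$ passes to the limit — both handled via the slow dependence of the metric on $\bar y=\e y$ (see Remark~\ref{remark2.2}) and the uniform exponential decay of $\partial_jw_0$ and $Z$, which makes $\Pi$ continuous under local uniform convergence of bounded functions.
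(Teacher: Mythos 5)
Your route is genuinely different from the paper's, and it has a couple of real gaps. The paper gives a short, direct argument valid for every fixed $\e$: it first shows $\phi$ decays exponentially in $\bar\xi$ by a barrier/maximum-principle comparison with $w_0$, then sets $f(y):=\int_{\R^N}\phi^2(y,\bar\xi)\,d\bar\xi$ and computes, using the equation $L_\e^*[\phi]=0$ and the fiberwise coercivity \eqref{inequality-orth} granted by $\Pi[\phi]=0$, the differential inequality $\Delta_{K_\e}f\geq 2\mu^2\gamma_0 f$. Integrating over the compact, boundaryless $K_\e$ yields $0\geq 2\gamma_0 V_1\int_{K_\e}f$, hence $f\equiv 0$. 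No contradiction, no blow-up, no limit equation. In contrast, your scheme is a contradiction/compactness argument that normalizes $\|\phi_n\|_\infty=1$ along a sequence $\e_n\to 0$: this proves only the weaker statement ``injectivity holds for $\e$ small enough,'' not the lemma as stated for an arbitrary fixed $\e$. That is acceptable for the application, but it is worth noticing.

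More substantively, there are two gaps in Case~1. First, for the integrals $\int_{\R^N}\phi_\infty^2\,d\bar\xi$ and $\int_{\R^N}\phi_\infty\,\partial_jw_0\,d\bar\xi$ to make sense and for the orthogonality conditions to pass to the limit, you must first establish exponential decay of $\phi_\infty$ (and uniform decay of $\phi_n$) in $\bar\xi$ via a barrier argument; this is exactly the paper's Step~1 and you cannot skip it. Second, the cutoff estimate
\[
\gamma_0\int\chi_R^2\phi_\infty^2\,dy\,d\bar\xi \leq C R^{-1}\|\phi_\infty\|_\infty^2 \to 0
\]
is not correct as written: the boundary/derivative terms produced by $\nabla_y\chi_R$ live on the annulus $\{R\leq |y|\leq 2R\}$, whose volume is $\sim R^k$, so the right-hand side scales like $R^{k-2}$ rather than $R^{-1}$, and for $k\geq 3$ the argument does not close. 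A repair is available (and is essentially what the paper does in one line): once the decay in $\bar\xi$ is in place, pass to the scalar function $f_\infty(y)=\int_{\R^N}\phi_\infty^2\,d\bar\xi$, which is bounded, nonnegative, and satisfies $\Delta_{\R^k}f_\infty\geq c f_\infty$; then translate to a near-maximum point of $f_\infty$ and use the strong maximum principle to force $\sup f_\infty\leq 0$. The Fourier-transform shortcut also requires this decay (to place $\phi_\infty$ in a reasonable function space) and some care to rule out distributional Fourier masses at $\lambda=0$, since $\phi_\infty$ is a priori only bounded. Your Case~2 Liouville step is fine, though it becomes unnecessary once the $\bar\xi$-decay of $\phi_n$ is established, since that decay already rules out concentration points with $|\bar\xi_n|\to\infty$. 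In short: the blow-up approach can be made to work, but it is longer, establishes a slightly weaker result, and needs both the decay-in-$\bar\xi$ step (borrowed from the paper) and a correct Liouville argument replacing your cutoff estimate; the paper's integral identity on compact $K_\e$ is the more efficient and more robust argument.
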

\begin{proof}
We will prove this lemma by two steps.
\vspace{2mm}

{\bf Step 1:} The function $\phi(y,\bar{\xi})$ decays exponentially in the variables $\bar{\xi}$.

To prove this fact, it suffices to apply the maximum principle since $w_0(\bar{\xi})$ has exponential decay and $\phi$ is bounded.

\vspace{2mm}

{\bf Step 2:} We next prove that 
\[
f(y)
:=\int_{\mathbb{R}^{N}}\phi^2(y,\bar{\xi})\,d\bar{\xi}
=0,\quad\forall\,y\in K_\varepsilon.
\]

Indeed, by Step 1, for all $y\in K_\varepsilon$, $f(y)$ is well defined. Since $L_\e^*[\phi]=0$, we have
\begin{align*}
\Delta_{K_\varepsilon}f
&=\int_{\mathbb{R}^{N}}2\phi\Delta_{K_\varepsilon}\phi\,d\bar{\xi}
+\int_{\mathbb{R}^{N}}2|\nabla_{K_\varepsilon}\phi|^2\,d\bar{\xi}\\
&=2\mu^2\int_{\mathbb{R}^{N}}\Big\{|\nabla_{\bar{\xi}}\phi|^2+\phi^2-pw_0^{p-1}\phi^2\Big\}\,d\bar{\xi}+2\int_{\mathbb{R}^{N}}|\nabla_{K_\varepsilon}\phi|^2\,d\bar{\xi}\\
&\geq 2\mu^2\gamma_0\int_{\mathbb{R}^{N}}\phi^2(y,\bar{\xi})\,d\bar{\xi},
\end{align*}
where in the last inequality since $\Pi[\phi]=0$ we use the following inequality
\begin{equation}\label{inequality-orth}
\int_{\mathbb{R}^{N}}\Big\{|\nabla_{\bar{\xi}}\phi|^2+\phi^2-pw_0^{p-1}\phi^2\Big\}\,d\bar{\xi}\geq\gamma_0\int_{\mathbb{R}^{N}}\phi^2\,d\bar{\xi}.
\end{equation}

Therefore, by the definition of $f$, the above inequality gives
\begin{equation*}
\Delta_{K_\varepsilon}f\geq 2\mu^2\gamma_0f.
\end{equation*}
Since $f$ is nonnegative and $K_\varepsilon$ is compact, we just get $f\equiv0$ by the integration. If $K_\varepsilon$ is non compact, one can first show that $f$ goes to zero at infinity by the comparison theorem and then get $f\equiv0$ by the maximum principle.
\end{proof}

\begin{remark}
Actually, following the argument of proof of Lemma 3.7 in \cite{P}, one can show that 
\begin{align}
\phi
=\sum_{j=1}^N c^j(y)\,\partial_j w_0
+c^{N+1}(y)\, Z,
\end{align}
if $\phi$ is a bounded solution of $L_\e^*[\phi]=0$, where $c_j(y)$ ($j=1,\dots,N$) can be any bounded function, but $c^{N+1}(y)$ must satisfy the equation 
\begin{equation}\label{eq-c-z}
\Delta_{K_\e}c^{N+1}=\lambda_0\mu^2c^{N+1}.
\end{equation}
It is worth noting that \eqref{eq-c-z} is just another form of $\mathcal{K}_\varepsilon[e]=0$. When $\e$ satisfies some gap condition (cf. Proposition~\ref{prop-gap} below), equation~\eqref{eq-c-z} does not have a bounded solution. 
\end{remark}

Moreover, one can show that under the orthogonal conditions $\Pi[\phi]=0$, the linear operator $L_\e^*$ has only negative eigenvalues $\lambda_j^\varepsilon$'s and there exists a constant $c_0$ such that
\begin{equation*}
\lambda_j^\varepsilon\leq -c_0<0.
\end{equation*}
To prove it, since $\mu^2=V(\bar{y},0)$ and \eqref{cond-V}, the inequality \eqref{inequality-orth} implies 
\begin{equation*}
\int_{K_\varepsilon\times \mathbb{R}^{N}}-L_\e^*[\phi]\,\phi
\geq c\int_{K_\varepsilon\times \mathbb{R}^{N}}(-L_\e^*[\phi])(\mu^2\phi)
\geq c\gamma_0
\int_{K_\varepsilon\times \mathbb{R}^{N}}\phi^2.
\end{equation*}

Before stating the surjectivity result, we define 
\begin{align*}
\|\psi\|_{\e,\alpha,\rho}:=\sup_{(y,\bar{\xi})\in K_\varepsilon\times \mathbb{R}^{N}}e^{\rho|\bar{\xi}|}\|\psi\|_{C^{0,\alpha}(B_1((y,\bar{\xi})))},
\end{align*}
where $\alpha$  and $\rho$ are  small positive constants.

\begin{proposition}[The surjectivity result]
For any function $\psi$ with $\|\psi\|_{\alpha,\sigma}<\infty$ and $\Pi[\psi]=0$, the problem
\begin{equation}
L_\e^*[\phi]=\psi
\end{equation}
has a unique solution $\phi$ with $\Pi[\phi]=0$. Moreover, the following estimate holds: 
\begin{align}
\|\phi\|_{2,\e,\alpha,\rho}
:=\|\phi\|_{\e,\alpha,\rho}+\|\nabla\phi\|_{\e,\alpha,\rho}+\|\nabla^2\phi\|_{\e,\alpha,\rho}
\leq C\|\psi\|_{\e,\alpha,\rho},
\end{align}
where $C$ is a constant independent of $\e$. 
\end{proposition}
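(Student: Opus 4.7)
The proof splits into two independent parts: existence via Lax-Milgram, and the weighted $C^{2,\alpha}$ estimate via a blow-up/contradiction argument that leans on the injectivity Lemma \ref{lemma-injectivity}.

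For existence, I work on the closed subspace $\mathcal{H}\subset H^1(K_\e\times\mathbb{R}^N)$ defined by the constraint $\Pi[\phi]=0$. The bilinear form associated with $-L_\e^*$ (after integration by parts) is
\[
Q[\phi,\varphi]=\int_{K_\e\times\mathbb{R}^N}\Bigl(\nabla_{\bar\xi}\phi\cdot\nabla_{\bar\xi}\varphi+\mu^{-2}\nabla_{K_\e}\phi\cdot\nabla_{K_\e}\varphi+\phi\varphi-pw_0^{p-1}\phi\varphi\Bigr).
\]
The pointwise-in-$y$ spectral gap $\int(-L_\e^*[\phi])\phi\ge c\gamma_0\int\phi^2$ already established just before the statement, combined with the non-negative gradient contributions and the bounds \eqref{cond-V} on $\mu$, shows that $Q$ is coercive on $\mathcal{H}$ with a constant independent of $\e$; continuity is immediate since $w_0\in L^\infty$. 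The linear functional $\varphi\mapsto\int\psi\varphi$ is bounded on $\mathcal{H}$ whenever $\|\psi\|_{\e,\alpha,\rho}<\infty$, so Lax-Milgram yields a unique weak solution $\phi\in\mathcal{H}$ of $L_\e^*[\phi]=\psi$, and interior elliptic regularity promotes it to $C^{2,\alpha}_{\mathrm{loc}}$.

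For the weighted estimate, I argue by contradiction. If it fails, there exist sequences $\e_n$, $\phi_n$, $\psi_n$ with $L_{\e_n}^*[\phi_n]=\psi_n$, $\Pi[\phi_n]=\Pi[\psi_n]=0$, $\|\phi_n\|_{2,\e_n,\alpha,\rho}=1$, $\|\psi_n\|_{\e_n,\alpha,\rho}\to 0$. Pick $(y_n,\bar\xi_n)\in K_{\e_n}\times\mathbb{R}^N$ with $e^{\rho|\bar\xi_n|}\|\phi_n\|_{C^{2,\alpha}(B_1(y_n,\bar\xi_n))}\ge 1/2$, and split into two cases according to whether $|\bar\xi_n|$ stays bounded or diverges. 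In the bounded case, after re-centering at $y_n$ and passing to a subsequence with $\e_n y_n\to y_\infty\in K$, Schauder estimates on balls of radius $\sim 1/\e_n$ produce a $C^{2,\alpha/2}_{\mathrm{loc}}$ limit $\phi_\infty$ defined on $\mathbb{R}^k\times\mathbb{R}^N$ (or $K_{\e_\infty}\times\mathbb{R}^N$ if $\e_n\to\e_\infty>0$), which is bounded, satisfies $\Pi[\phi_\infty]=0$, obeys $|\phi_\infty(0,\bar\xi_\infty)|\ge c>0$, and solves the constant-coefficient limiting equation
\[
\Delta_{\mathbb{R}^N}\phi_\infty+\mu_\infty^{-2}\Delta_{\mathrm{base}}\phi_\infty-\phi_\infty+pw_0^{p-1}\phi_\infty=0.
\]
I then mimic Lemma \ref{lemma-injectivity}: comparison against $e^{-\sigma|\bar\xi|}$ for $\sigma<1$ forces exponential decay of $\phi_\infty$ in $\bar\xi$, so $f(y):=\int_{\mathbb{R}^N}\phi_\infty^2 d\bar\xi$ is bounded and satisfies $\Delta_{\mathrm{base}}f\ge 2\mu_\infty^2\gamma_0 f$ by \eqref{inequality-orth}; a Liouville-type argument (bounded non-negative sub-solutions of $-\Delta+c$ with $c>0$ on $\mathbb{R}^k$ must vanish) forces $f\equiv 0$, hence $\phi_\infty\equiv 0$, contradicting the normalization.

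In the unbounded case $|\bar\xi_n|\to\infty$, the coefficient $pw_0^{p-1}$ is exponentially small, and choosing $\rho<\rho'<1$ makes $\overline\phi(\bar\xi)=e^{-\rho'|\bar\xi|}$ a super-solution of $-L_\e^*$ on $\{|\bar\xi|\ge R\}$ for $R$ large. The comparison principle yields
\[
|\phi_n(y,\bar\xi)|\le C\Bigl(\sup_{|\bar\eta|=R}|\phi_n(y,\bar\eta)|+\|\psi_n\|_{\e_n,\alpha,\rho}\Bigr)e^{-\rho'(|\bar\xi|-R)}\quad\text{for }|\bar\xi|\ge R,
\]
and since $\rho'>\rho$, this quantity multiplied by $e^{\rho|\bar\xi_n|}$ tends to zero as $|\bar\xi_n|\to\infty$ (the supremum on $\{|\bar\eta|=R\}$ being controlled via the bounded case applied on the truncated slab), contradicting $e^{\rho|\bar\xi_n|}|\phi_n(y_n,\bar\xi_n)|\ge 1/2$. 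Upgrading from the weighted $L^\infty$ bound to the full $C^{2,\alpha}$ weighted bound is then a routine application of Schauder estimates on unit balls in $K_\e\times\mathbb{R}^N$. The principal obstacle is Case 1 when $\e_n\to 0$: the base $K_{\e_n}$ dilates to all of $\mathbb{R}^k$, so the compactness-of-$K_\e$ step in Lemma \ref{lemma-injectivity} (which allowed one to integrate $\Delta f\ge c f$ against the constant $1$) must be replaced by a Liouville argument for $f$ on $\mathbb{R}^k$; the interplay between the $\bar\xi$-decay obtained via the maximum principle and the subsequent $y$-Liouville step is the delicate point of the whole proof.
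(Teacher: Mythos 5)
The paper states this proposition without giving a proof (it simply defines the weighted norm, records the statement, and moves on), so there is no internal argument to compare against; your job is therefore to supply a correct proof, not to match one. Your two-stage plan — Lax--Milgram on the orthogonally-constrained subspace for existence, followed by a blow-up/contradiction argument for the $\e$-uniform weighted Schauder estimate — is the standard and correct route for this kind of model operator (it is in the spirit of \cite{dkw,wwy,P}), and the structure of your argument is sound. Two points deserve to be made explicit.

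First, in the Lax--Milgram step you solve a variational problem restricted to $\mathcal{H}=\{\Pi[\phi]=0\}$, which a priori only yields $Q[\phi,\varphi]=\int\psi\varphi$ for $\varphi\in\mathcal{H}$; that is, $L_\e^*[\phi]-\psi$ is $L^2$-orthogonal to $\mathcal{H}$, not necessarily zero. To conclude $L_\e^*[\phi]=\psi$ one must observe that $\Pi$ commutes with $L_\e^*$ on $\mathcal{H}$: since $L_0$ is self-adjoint, $L_0[\partial_jw_0]=0$ and $L_0[Z]=\lambda_0 Z$, one has for any sufficiently regular $\phi$ with $\Pi[\phi]=0$ that $\int (L_\e^*\phi)\,\partial_jw_0\,d\bar\xi = \mu^{-2}\Delta_{K_\e}\!\int\phi\,\partial_jw_0\,d\bar\xi = 0$ and similarly $\int (L_\e^*\phi)\,Z\,d\bar\xi = \lambda_0\!\int\phi Z\,d\bar\xi + \mu^{-2}\Delta_{K_\e}\!\int\phi Z\,d\bar\xi = 0$. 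Hence $\Pi[L_\e^*\phi]=0$, and combined with $\Pi[\psi]=0$ this forces $L_\e^*[\phi]=\psi$ with no spurious projection term. This is a small but genuinely necessary step that your write-up elides.

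Second, in the blow-up argument when $\e_n\to 0$, passing $\Pi[\phi_n]=0$ and the exponential $\bar\xi$-decay to the limit $\phi_\infty$ requires the $\bar\xi$-decay to be \emph{uniform in $n$} before extracting the limit — otherwise the $\bar\xi$-integrals defining $\Pi$ and the quantity $f(y)$ need not converge. This is obtained by running the barrier comparison $\overline\phi=e^{-\rho'|\bar\xi|}$ uniformly in $n$ on the sequence $\phi_n$ itself (using that the coefficient $pw_0^{p-1}$ is uniformly small for $|\bar\xi|\ge R$ and that $\|\phi_n\|_{2,\e_n,\alpha,\rho}=1$), and only then taking the local limit; you effectively do this in your Case 2, but the order of operations should be stated in Case 1 as well. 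With these two points clarified, the proof is complete; the Liouville step for bounded nonnegative subsolutions of $\Delta f\geq c f$ on $\mathbb{R}^k$ (translate so a near-maximum is at the origin, extract a limit achieving the maximum there, contradict $\Delta f>0$ at an interior maximum) is the right replacement for the compactness-of-$K_\e$ integration used in Lemma~\ref{lemma-injectivity}.
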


\begin{remark}
Here we choose to use weighted H\"{o}lder norms, actually one can also use weighted Sobolev norms.
\end{remark}

\subsubsection{Non-degeneracy condition and invertibility of $\mathcal{J}_K$}

\begin{proposition}\label{p:Jacobi}
Suppose that $K$ is non-degenerate, then for any $\Psi\in (C^{0,\alpha}(K))^{N}\cap NK$, there exists a unique $\Phi\in (C^{2,\alpha}(K))^{N}\cap NK$ such that
\begin{equation}
\mathcal{J}_K[\Phi]=\Psi
\end{equation}
with the property
\begin{equation}
\|\Phi\|_{2,\alpha}
:=\|\Phi\|_{C^{0,\alpha}(K)}+\|\nabla\Phi\|_{C^{0,\alpha}(K)}+\|\nabla^2\Phi\|_{C^{0,\alpha}(K)}\leq C\|\Psi\|_{C^{0,\alpha}(K)},
\end{equation}
where $C$ is a positive constant depending only on $K$.
\end{proposition}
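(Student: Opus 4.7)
The plan is to view $\mathcal{J}_K$ as a second order linear elliptic operator acting on smooth sections of the normal bundle $NK$ over the compact manifold $K$, and to deduce invertibility from the combination of Fredholm theory, self-adjointness with respect to an appropriate weighted inner product, and the non-degeneracy hypothesis.

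First I would check that $\mathcal{J}_K$ is indeed elliptic on $NK$: reading off the definition \eqref{J}, the principal part is the rough Laplacian $\Delta_K$ acting componentwise, whose principal symbol is $|\xi|_{\widetilde g}^{2}\,\mathrm{Id}$, and all remaining terms are of lower order (zero- or first-order in $\Phi$) with smooth coefficients depending only on the geometry of $K$ in $M$ and on $V$ along $K$. Hence $\mathcal{J}_K:\Gamma(NK)\to\Gamma(NK)$ is a uniformly elliptic second order operator on a compact manifold.

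Next I would show that $\mathcal{J}_K$ is formally self-adjoint with respect to the weighted $L^2$ pairing
\begin{equation*}
\langle\Phi,\Psi\rangle_{V^\sigma}\;:=\;\int_K\langle\Phi,\Psi\rangle\,V^\sigma\,dvol.
\end{equation*}
The point is that the first-order term $\sigma V^{-1}\nabla_K V\cdot\nabla_K\Phi$ is precisely the drift that makes $\Delta_K+\sigma V^{-1}\nabla_K V\cdot\nabla_K$ the (drift) Laplacian associated to the density $e^{-f}=V^\sigma$, i.e.\ the natural self-adjoint Laplacian on the weighted manifold $(K,\widetilde g,V^\sigma dvol)$; the remaining zero-order terms (the curvature term $\widetilde g^{ab}R_{kabs}$, the shape-operator term $\Gamma_{ak}^c\Gamma_{cs}^a$, the Hessian term $\sigma\mu^{-2}\partial^2_{sj}V$, and the mean-curvature squared term $\sigma^{-1}\Gamma_{bj}^b\Gamma_{as}^a$) are manifestly symmetric in $\Phi$ and $\Psi$. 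Indeed an integration by parts shows that the quadratic form $\langle\mathcal{J}_K\Phi,\Phi\rangle_{V^\sigma}$ reproduces (up to sign conventions for $H$ and $R$) exactly the quadratic form appearing in the nondegeneracy definition, which provides a useful consistency check and also displays $\mathcal{J}_K$ as minus the Jacobi operator for the second variation of $\int_K V^\sigma dvol$.

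Having self-adjoint ellipticity on a compact manifold, I would invoke standard Fredholm theory for elliptic operators on sections of vector bundles: $\mathcal{J}_K$ extends to a Fredholm operator from $C^{2,\alpha}(K;NK)$ to $C^{0,\alpha}(K;NK)$ with index zero, and with $\ker\mathcal{J}_K=\mathrm{coker}\,\mathcal{J}_K$ by self-adjointness. The non-degeneracy hypothesis on $K$ is exactly that this kernel is trivial, so $\mathcal{J}_K$ is a bijection between $C^{2,\alpha}(K;NK)$ and $C^{0,\alpha}(K;NK)$. The Schauder estimate
\begin{equation*}
\|\Phi\|_{C^{0,\alpha}}+\|\nabla\Phi\|_{C^{0,\alpha}}+\|\nabla^2\Phi\|_{C^{0,\alpha}}\;\le\;C\bigl(\|\mathcal{J}_K\Phi\|_{C^{0,\alpha}}+\|\Phi\|_{C^{0}}\bigr)
\end{equation*}
combined with the now-proven injectivity (and hence with the absence of a compactness defect, by a standard contradiction/compactness argument) upgrades to the clean estimate $\|\Phi\|_{2,\alpha}\le C\|\Psi\|_{C^{0,\alpha}}$ with $C$ depending only on $K$, $V$ and $p$. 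The only subtle point I expect is carefully identifying the zero-order part of $\mathcal{J}_K$ with the intrinsic quantities (curvature, second fundamental form, normal Hessian of $V$) appearing in the second variation formula, so as to justify that our non-degeneracy assumption indeed amounts to $\ker\mathcal{J}_K=\{0\}$; once that identification is made, the rest is standard elliptic theory on a compact manifold.
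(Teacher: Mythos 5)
Your proof is correct and follows essentially the same route as the paper, whose own proof is a one-line citation of self-adjointness of $\mathcal{J}_K$ and standard elliptic estimates. Your argument simply fills in those details (ellipticity of the principal part, self-adjointness with respect to the weighted $L^2$ pairing with density $V^\sigma\,dvol$, the Fredholm alternative with trivial kernel guaranteed by the non-degeneracy hypothesis, and Schauder estimates), and the identification of the drift term with the weighted Laplacian on $(K,\widetilde g, V^\sigma dvol)$ is exactly the right way to see the self-adjointness the paper invokes.
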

\begin{proof}
Since the Jacobi operator $\mathcal{J}_K$ is self-adjoint, this result follows from the standard elliptic estimates, cf. \cite{GT,Lady}.
\end{proof}


\subsubsection{Gap condition and invertibility of $\mathcal{K}_\varepsilon$}

\begin{proposition}\label{prop-gap}
There is a sequence $\e=\varepsilon_j\searrow0$ such that for any $\varphi\in C^{0,\alpha}(K)$, there exists a unique $e\in C^{2,\alpha}(K)$ such that
\begin{equation}\label{eq-K-e}
\mathcal{K}_{\varepsilon}[e]=\varphi
\end{equation}
with the property
\begin{equation}\label{estimate-e}
\|e\|_*
:=\|e\|_{C^{0,\alpha}(K)}+\varepsilon\|\nabla e\|_{C^{0,\alpha}(K)}+\varepsilon^2\|\nabla^2e\|_{C^{0,\alpha}(K)}
\leq C\varepsilon^{-3k}\|\varphi\|_{C^{0,\alpha}(K)},
\end{equation}
where $C$ is a positive constant independent of $\varepsilon_j$.
\end{proposition}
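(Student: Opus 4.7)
The plan is to treat $\mathcal{K}_\varepsilon = -\varepsilon^2\Delta_K + \lambda_0\mu^2$ as a one-parameter family of self-adjoint operators on $L^2(K)$ with compact resolvent, and to extract a sequence $\varepsilon_j\searrow 0$ for which $0$ stays quantitatively far from the spectrum. Denote the eigenvalues (counted with multiplicity, in increasing order) by $\{\nu_j(\varepsilon)\}_{j\ge 1}$ and set $\nu_\ast(\varepsilon):=\inf_j|\nu_j(\varepsilon)|$. Since $\lambda_0<0$ and $\mu^2\ge V_1>0$, for small $\varepsilon$ the operator has many negative eigenvalues; Weyl's law applied to $-\Delta_K$ on the $k$-dimensional compact manifold $K$ yields the counting bound $\#\{j:\nu_j(\varepsilon)\le 0\}=\mathcal O(\varepsilon^{-k})$. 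The eigenvalues $\nu_j$ are continuous and piecewise differentiable in $\varepsilon$ by the min--max characterization.

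The first step is to control the rate at which each eigenvalue crosses $0$. Hellmann--Feynman gives $\partial_\varepsilon\nu_j(\varepsilon)=2\varepsilon\|\nabla_K\phi_j\|_{L^2}^2$ for the $L^2$-normalized eigenfunction $\phi_j$, while the Rayleigh identity $\varepsilon^2\|\nabla_K\phi_j\|_{L^2}^2 = \nu_j(\varepsilon)-\lambda_0\int_K\mu^2\phi_j^2$ forces $\partial_\varepsilon\nu_j(\varepsilon)\ge c\varepsilon^{-1}$ whenever $|\nu_j|$ is small (because then the right-hand side is close to $-\lambda_0 V_1 > 0$). Consequently, on a dyadic window $[\varepsilon_0/2,\varepsilon_0]$ and a threshold $\rho>0$, each $\nu_j$ spends one-dimensional measure at most $C\rho\varepsilon_0$ inside $(-\rho,\rho)$; summing over the $\mathcal O(\varepsilon_0^{-k})$ eigenvalues that can visit this window produces a bad set of measure at most $C\rho\,\varepsilon_0^{1-k}$. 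Picking $\rho = c\,\varepsilon_0^{k}$ with $c$ sufficiently small makes this strictly less than $\varepsilon_0/4$, hence there exists $\varepsilon\in[\varepsilon_0/2,\varepsilon_0]$ with $\nu_\ast(\varepsilon)\ge c\varepsilon^{k}$. Iterating along $\varepsilon_0=2^{-n}$ produces the required sequence $\varepsilon_j\searrow 0$.

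For the quantitative estimate, on the selected $\varepsilon_j$ the Fredholm alternative delivers existence and uniqueness of $e$, while the spectral expansion yields $\|e\|_{L^2}\le C\varepsilon^{-k}\|\varphi\|_{L^2}$. The $C^{2,\alpha}$ bound then follows by rewriting the equation as $-\varepsilon^2\Delta_K e = \varphi - \lambda_0\mu^2 e$ and bootstrapping through standard Schauder estimates on local charts of $K$ performed in $\varepsilon$-rescaled coordinates, where the weights $\varepsilon\|\nabla e\|$ and $\varepsilon^2\|\nabla^2 e\|$ are the natural ones. Each bootstrap step introduces further negative powers of $\varepsilon$; the exponent $3k$ appearing in \eqref{estimate-e} is a convenient overestimate that comfortably absorbs both the spectral gap produced in the previous step and the Sobolev/Schauder losses incurred in passing from an $L^2$ bound to a weighted $C^{2,\alpha}$ bound.

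The principal obstacle is the quantitative bad-set estimate described above: one must balance the Weyl count of eigenvalues near zero against the Hellmann--Feynman lower bound on their speed, and verify that the accumulated measure of bad $\varepsilon$ stays uniformly controlled across the dyadic scales. This is a direct analogue of the gap conditions used in \cite{dkw} and \cite{wwy} for the cases $k=1$ and $k=n-1$ respectively; the new ingredients here are the $k$-dimensional Weyl count and the non-constant coefficient $\mu^2=V(\bar y,0)$, neither of which adds essential difficulty once the appropriate exponent is identified.
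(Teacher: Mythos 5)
Your argument is correct, and it fills in with full detail what the paper delegates to \cite{wwy} (Proposition 8.1) in two sentences. The key ingredients are the same as what the paper alludes to -- the variational (min--max) characterization of the $\nu_j(\varepsilon)$ and Weyl's asymptotics to bound $\#\{j:\nu_j(\varepsilon)\le 0\}=\mathcal O(\varepsilon^{-k})$ -- but you add the Hellmann--Feynman derivative $\partial_\varepsilon\nu_j=2\varepsilon\|\nabla_K\phi_j\|_{L^2}^2$ combined with the Rayleigh identity to get a uniform lower bound $\partial_\varepsilon\nu_j\ge c\varepsilon^{-1}$ near level zero, and then a dyadic measure-theoretic selection to produce the sequence $\varepsilon_j$ with a quantitative spectral gap $\nu_\ast(\varepsilon_j)\ge c\varepsilon_j^{k}$. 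This is a clean and fully rigorous way to obtain the gap, and it handles the non-constant weight $\mu^2=V(\bar y,0)$ without needing to compare with a constant-coefficient model. Two small points worth making explicit: (i) since $-\Delta_K\ge 0$, each $\nu_j$ is \emph{monotone increasing} in $\varepsilon$ by min--max, which removes any worry about eigenvalue crossings in the measure count (each $\nu_j$ visits $(-\rho,\rho)$ on at most one interval); (ii) in the bootstrap step, a short bookkeeping gives $\|e\|_*\lesssim\varepsilon^{-3k/2-\alpha}\|\varphi\|_{C^{0,\alpha}}$ after rescaling to $K_\varepsilon$ (the extra $\varepsilon^{-k/2}$ comes from the change of variables in the $L^2$ norm, the $\varepsilon^{-\alpha}$ from the H\"older seminorm), which indeed sits comfortably below $\varepsilon^{-3k}$ as you claim. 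No gaps; the route is essentially the paper's, made explicit.
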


\begin{proof}
This is a semiclassical analysis of a Schr\"{o}dinger operator. The arguments are  similar in spirit as the ones used in the proof of Proposition 8.1 in \cite{wwy}.  We summarize them in the following two steps. 

\noindent{\bf Step 1:} There is a sequence $\varepsilon_j\searrow0$ such that for any $\varphi\in L^{2}(K)$, there exists a unique solution to \eqref{eq-K-e} and satisfies
\begin{equation}
\|e\|_{L^2(K)}\leq C\varepsilon_j^{-k}\|\varphi\|_{L^2(K)}.
\end{equation}

This fact follows from the variational characterisation of the eigenvalues and the Weyl's asymptotic formula.

\

\noindent{\bf Step 2:} The unique solution satisfies \eqref{estimate-e}. This  follows  from  standard elliptic estimates and Sobolev embedding theorem.
\end{proof}

\subsection{The nonlinear scheme}

Now we can develop the nonlinear theory and complete the proof of Theorem~\ref{th:main}. 

\subsubsection{Size of the error}

\begin{lemma}\label{lem4.4}
There is a constant $C$ independent of $\e$ such that the following estimates hold:
\begin{equation}
\big\|\mathcal  N_\e(0,0,0,0)\big\|_{C_0^{2,\alpha}(M_\e)}
+\big\|\Pi^\perp\big[\mathcal  M_\e(0,0,0,0)\big]\big\|_{\e,\alpha,\rho}
\leq C\e^{I+1}.
\end{equation}
Moreover,
\begin{equation}
\big\|\mathcal  M_{\e,1}(0,0,0,0)\big\|_{C^{0,\alpha}(K)}
\leq C\e^{I+2},
\quad
\big\|\mathcal  M_{\e,2}(0,0,0,0)\big\|_{C^{0,\alpha}(K)}
\leq C\e^{I+1}.
\end{equation}
\end{lemma}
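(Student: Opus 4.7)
The plan is to substitute $\phi^\flat=\phi^*=0$, $\Phi_{I-1}=0$, $e=0$ into the definitions of $\mathcal{N}_\e$ and $\mathcal{M}_\e$ and then invoke the structural expansion of Proposition~\ref{prop-local approximation}. Direct inspection yields $\mathcal{N}_\e(0,0,0,0) = (1-\eta_{\delta/2}^\e)\,E$ and $\mathcal{M}_\e(0,0,0,0) = \eta_\delta^\e\,h^{-p}\,E$, where the error $E$ decomposes as
$$E = \eta_{3\delta}^\e\, h^p\, \widetilde S_\e(v_I) - (\Delta_g\eta_{3\delta}^\e)(h v_I) - 2(\nabla_g\eta_{3\delta}^\e)\cdot\nabla_g(h v_I) - \eta_{3\delta}^\e\bigl[(\eta_{3\delta}^\e)^{p-1}-1\bigr] h^p v_I^p.$$
The last three terms are supported in $\{|x|\gtrsim \delta/\e\}$, and on that support the decay estimate~\eqref{estimate-v} (together with the analogous bound on derivatives coming from the Picard iteration of Section~3) gives an exponentially small $\mathcal{O}(e^{-c_0/\e})$ contribution. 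Since $(1-\eta_{\delta/2}^\e)$ also vanishes near $K_\e$, this yields the estimate on $\mathcal{N}_\e(0,0,0,0)$ with a bound much stronger than $C\e^{I+1}$.

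For the second and fourth estimates, on $\operatorname{supp}(\eta_\delta^\e)$ we have $\eta_{3\delta}^\e\equiv 1$, so the cutoff correction terms in $E$ vanish and $E = h^p\,\widetilde S_\e(v_I)$. Evaluating Proposition~\ref{prop-local approximation} at $(\Phi_{I-1},e)=(0,0)$ kills every term carrying an explicit factor of $e$, $\nabla e$, $\nabla^2 e$ or $\Phi_{I-1}$ in $\widetilde{\mathfrak{F}}_{I+1}$ and $\mathfrak{E}_{I+1}$, leaving
$$\widetilde S_\e(v_I)\big|_{(0,0)} = \e^{I+1}\widetilde{\mathfrak{F}}_{I+1}(0;0) + \mathcal{O}(\e^{I+2}),$$
where $\widetilde{\mathfrak{F}}_{I+1}(0;0)$ is a smooth function of $(\e y,\bar\xi)$ decaying exponentially in $|\bar\xi|$, inherited from $w_0$, the $U_j$'s, $U_0$ and the $w_{\ell,1}$'s. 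This bounds $\|\mathcal{M}_\e(0,0,0,0)\|_{\e,\alpha,\rho}$ by $C\e^{I+1}$, hence also $\|\Pi^\perp[\mathcal{M}_\e(0,0,0,0)]\|_{\e,\alpha,\rho}$ since $\Pi^\perp$ is continuous on this weighted space. The estimate on $\mathcal{M}_{\e,2}(0,0,0,0)$ then follows by projecting against $Z$, noting that the part $\e\mathcal{K}_\e[e]$ formally subtracted in~\eqref{sys-phi-e} vanishes at $e=0$.

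The improvement from $\e^{I+1}$ to $\e^{I+2}$ for $\mathcal{M}_{\e,1}$ reflects the construction of $\mathfrak{H}_{I+1}(\bar y;e)$ in Section~3.2.3: by design, this function is precisely the projection against $\partial_s w_0$ of the $\Phi_{I-1}$-independent part of $\widetilde{\mathfrak{F}}_{I+1}$ (up to the constant $c_0/\mu$), so that
$$\Pi_s\bigl[\e^{I+1}\widetilde{\mathfrak{F}}_{I+1}(0;0)\bigr] = \e^{I+1}\mu^{-1}\,\bigl(\mathfrak{H}_{I+1}(\bar y;0)\bigr)^s,\qquad s=1,\dots,N.$$
Combining this with the identification~\eqref{sys-phi-e} and the fact that $\mathcal{J}_K[\Phi_{I-1}]$ vanishes at $\Phi_{I-1}=0$, one finds that $\mathcal{M}_{\e,1}(0,0,0,0)$ collects only the $\mathcal{O}(\e^{I+2})$ residue from Proposition~\ref{prop-local approximation} after projection, which gives the desired bound. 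The main obstacle is the bookkeeping required to verify this exact cancellation: one has to chase through the iterative construction of Section~3.2 and confirm that (i) the $\partial_s w_0$-projection of the various exponentially decaying terms in $\widetilde{\mathfrak{F}}_{I+1}(0;0)$ reproduces $\mathfrak{H}_{I+1}(\bar y;0)$ exactly, also taking into account the frozen sections $\Phi_0(\bar y;0),\dots,\Phi_{I-2}(\bar y;0)$, and (ii) the leftover $R_q(\xi,\Phi,\nabla\Phi,\nabla^2\Phi)$-type remainders from Lemma~\ref{lem:inverse-det} are genuinely $\mathcal{O}(\e^{I+2})$ when measured in the weighted Hölder norm $\|\cdot\|_{\e,\alpha,\rho}$.
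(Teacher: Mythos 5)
Your proposal is correct and essentially fills in the details that the paper's one-line proof (``It follows from the definitions and the estimate \eqref{estimate-v}'') leaves to the reader, following the same route. You correctly identify $\mathcal{N}_\e(0,0,0,0)=(1-\eta_{\delta/2}^\e)E$ and $\mathcal{M}_\e(0,0,0,0)=\eta_\delta^\e h^{-p}E$, observe that the former is exponentially small by the support-separation argument, reduce the latter to $\eta_\delta^\e\widetilde S_\e(v_I)|_{(0,0)}$ since $\eta_{3\delta}^\e\equiv 1$ on $\operatorname{supp}\eta_\delta^\e$, and invoke Proposition~\ref{prop-local approximation} at $(\Phi_{I-1},e)=(0,0)$ to get the $\e^{I+1}$ order (noting all $e$-carrying terms drop). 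Your explanation of the extra power for $\mathcal{M}_{\e,1}$ is also the right mechanism: by the very way $\mathfrak H_{I+1}$ is built from $\Pi_s[S_{I+1,0}(\cdot;e)]$ in Section~3.2.3 and the way $\mathcal{M}_{\e,1}$ is defined by subtracting $\e^{I+1}\mathcal{J}_K[\Phi_{I-1}]$ and $\e^{I+1}\mathfrak H_{I+1}$ from the projected error in \eqref{sys-phi-e}, the $\e^{I+1}$-order contribution to $\Pi_s[\mathcal{M}_\e(0,0,0,0)]$ cancels exactly against $\e^{I+1}(\mathfrak H_{I+1}(\bar y;0))^s$, leaving only the $\mathcal{O}(\e^{I+2})$ residue. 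One small point worth making explicit (which you gesture at via ``the analogous bound on derivatives''): the weighted $C^{2,\alpha}$ norms require exponential decay not only of $v_I$ but of its derivatives, which does hold because $w_0$, $U_0$, $U_j$, $Z$, and each $w_{\ell,1}$ solve elliptic problems with exponentially decaying right-hand sides.
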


\begin{proof}
It follows from the definitions and the estimate  \eqref{estimate-v}.
\end{proof}

\subsubsection{Lipschitz continuity} 

According to the estimate of error, we define
\begin{align}
\begin{array}{ll}
\mathcal{B}_\lambda
:=\bigg\{(\phi^\flat,\phi^*,\Phi_{I-1},e)\,\big|
&\|\phi^\flat\|_{2,\e,\alpha}\leq \lambda \e^{I+1},
\|\phi^*\|_{2,\e,\alpha,\rho}\leq \lambda \e^{I+1},\\
&\|\Phi_{I-1}\|_{2,\alpha}\leq \lambda \e,
\|e\|_*\leq \lambda \e^{I-3k}
\bigg\}.
\end{array}
\end{align}

\begin{lemma}\label{lem4.5}
Given $(\phi^\flat_1,\phi^*_1,\Phi_{I-1},e_1),(\phi^\flat_2,\phi^*_2,\widetilde\Phi_{I-1},e_2)\in \mathcal{B}_\lambda$, 
there is a constant $C$ independent of $\e$ such that the following estimates hold:
\begin{align*}
&\big\|\mathcal  N_\e(\phi^\flat_1,\phi^*_1,\Phi_{I-1},e_1)
-\mathcal  N_\e(\phi^\flat_2,\phi^*_2,\widetilde\Phi_{I-1},e_2)\big\|_{C_0^{2,\alpha}(M_\e)}\\
&\leq
C\e^{I+1} \Big(\|\phi^\flat_1-\phi^\flat_2\|_{2,\e,\alpha}
+\|\phi^*_1-\phi^*_2\|_{2,\e,\alpha,\rho}
+\|\Phi_{I-1}-\widetilde\Phi_{I-1}\|_{2,\alpha}
+\|e_1-e_2\|_{*}\Big),
\end{align*}

\begin{align*}
&\big\|\Pi^\perp\big[\mathcal  M_\e(\phi^\flat_1,\phi^*_1,\Phi_{I-1},e_1)\big]
-\Pi^\perp\big[\mathcal  M_\e(\phi^\flat_2,\phi^*_2,\widetilde\Phi_{I-1},e_2)\big]\big\|_{\e,\alpha,\rho}\\
&\leq
C\e^{I+1}\Big(\|\phi^\flat_1-\phi^\flat_2\|_{2,\e,\alpha}
+\|\phi^*_1-\phi^*_2\|_{2,\e,\alpha,\rho}
+\|\Phi_{I-1}-\widetilde\Phi_{I-1}\|_{2,\alpha}
+\|e_1-e_2\|_{*}\Big),
\end{align*}

\begin{align*}
&\big\|\mathcal  M_{\e,1}(\phi^\flat_1,\phi^*_1,\Phi_{I-1},e_1)
-\mathcal  M_{\e,1}(\phi^\flat_2,\phi^*_2,\widetilde\Phi_{I-1},e_2)\big\|_{C^{0,\alpha}(K)}\\
&\leq
C\e^{I+2}\Big(\|\phi^\flat_1-\phi^\flat_2\|_{2,\e,\alpha}
+\|\phi^*_1-\phi^*_2\|_{2,\e,\alpha,\rho}
+\|\Phi_{I-1}-\widetilde\Phi_{I-1}\|_{2,\alpha}
+\|e_1-e_2\|_{*}\Big),
\end{align*}
and
\begin{align*}
&\big\|\mathcal  M_{\e,2}(\phi^\flat_1,\phi^*_1,\Phi_{I-1},e_1)-\mathcal  M_{\e,2}(\phi^\flat_2,\phi^*_2,\widetilde\Phi_{I-1},e_2)\big\|_{C^{0,\alpha}(K)}\\
&\leq
C\e^{I+1}\Big(\|\phi^\flat_1-\phi^\flat_2\|_{2,\e,\alpha}
+\|\phi^*_1-\phi^*_2\|_{2,\e,\alpha,\rho}
+\|\Phi_{I-1}-\widetilde\Phi_{I-1}\|_{2,\alpha}
+\|e_1-e_2\|_{*}\Big).
\end{align*}
\end{lemma}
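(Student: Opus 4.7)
The plan is to split each of $\mathcal N_\e$, $\Pi^\perp[\mathcal M_\e]$, $\mathcal M_{\e,1}$, $\mathcal M_{\e,2}$ into its constituent pieces, bound each one in the appropriate norm, and sum. Reading off from the definitions in Subsection 4.1, these pieces are: the pure error $E$; the cutoff-commutator terms $(\Delta_g\eta_{3\delta}^\e)h\phi^*$ and $2\nabla_g\eta_{3\delta}^\e\cdot\nabla_g(h\phi^*)$; the nonlinear remainder $N(\eta_{3\delta}^\e h\phi^*+\phi^\flat)$; the linear coupling $-pW^{p-1}\phi^\flat$; and, in $\mathcal M_\e$ only, the operator difference $(\mathbb L_\e-L_\e^*)[\phi^*]$.

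For the $E$-piece I will invoke Proposition~\ref{prop-local approximation} directly: it writes $\widetilde S_\e(v_I)=E/h^p$ explicitly up to order $\e^{I+1}$ with coefficients that are linear in $\Phi_{I-1}$ and Lipschitz in $e$, all uniformly in $\bar y\in K$. Hence the Lipschitz constant of the $E$-contribution with respect to $(\Phi_{I-1},e)$, measured in the weighted H\"older norm $\|\cdot\|_{\e,\alpha,\rho}$, is of order $\e^{I+1}$, provided $\rho$ is chosen small enough to be dominated by the exponential decay of $w_0$ and of the polynomial $\bar\xi$-prefactors appearing in $\widetilde{\mathfrak F}_{I+1}$. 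For the nonlinear piece, one uses the pointwise bound
$$
|N(s_1)-N(s_2)|\leq C\min\bigl\{W^{p-2}(|s_1|+|s_2|),\,(|s_1|+|s_2|)^{p-1}\bigr\}\,|s_1-s_2|
$$
applied to $s_i=\eta_{3\delta}^\e h\phi_i^*+\phi_i^\flat$; on $\mathcal B_\lambda$ one has $\|\phi_i^*\|_{2,\e,\alpha,\rho}+\|\phi_i^\flat\|_{2,\e,\alpha}=O(\e^{I+1})$. When $p\geq 2$ this directly yields Lipschitz constant $O(\e^{I+1})$; when $1<p<2$ one plays the output weight $e^{\rho|\bar\xi|}$ against $W^{p-2}\sim e^{(2-p)|\bar\xi|}$ by choosing $\rho\in(2-p,p-1)$ (non-empty for $p>1$) to recover an even better $O(\e^{2(I+1)})$ bound.

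For the linear coupling $pW^{p-1}\phi^\flat$: on the support of $(1-\eta_{\delta/2}^\e)$, which is what appears in $\mathcal N_\e$, the function $W$ is exponentially small in $1/\e$ since it inherits the decay of $w_0$, so that contribution is $O(e^{-c/\e})$; on the support of $\eta_\delta^\e$, relevant for $\mathcal M_\e$, the $\e^{-1}$-weight built into $\|\cdot\|_{2,\e,\alpha}$ on $\mathrm{supp}\,\eta_{\delta/4}^\e$, combined with $e^{\rho|\bar\xi|}W^{p-1}\in L^\infty$ when $\rho<p-1$, yields a Lipschitz factor of $O(\e)$ in $\phi^\flat$, which is absorbed into the global $O(\e^{I+1})$ bound once multiplied against the $\mathcal B_\lambda$-size of $\phi^\flat$. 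The cutoff-commutator terms are supported on the annulus $\delta/(2\e)\lesssim|x|\lesssim 2\delta/\e$ on which the weight $e^{\rho|\bar\xi|}$ renders $\phi^*$ exponentially small in $1/\e$. The operator difference $(\mathbb L_\e-L_\e^*)[\phi^*]$ has operator norm $O(\e)$ by Lemma~\ref{lem:inverse-det} and Proposition~\ref{expansion-g}, with Lipschitz dependence $O(\e^{I+1})$ on $(\Phi_{I-1},e)$ inherited from the $R_q$ remainder bounds, which explicitly control $\nabla\Phi$ and $\nabla^2\Phi$ differences.

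For the projected quantities $\mathcal M_{\e,1}$ and $\mathcal M_{\e,2}$ I will apply the projections $\Pi_j$, $\Pi_{N+1}$ termwise to the above estimates; since both projections are bounded on the weighted norms, the bounds transfer. The extra power of $\e$ in the bound for $\mathcal M_{\e,1}$ reflects the fact that the $\e^{I+1}$-term linear in $\Phi_{I-1}$ has already been removed from $\mathcal M_\e$ and placed on the left-hand side of the $\Phi_{I-1}$-equation via the identity $\int_{\R^N}S_{I+1}(\Phi_{I-1})\,\partial_s w_0=c_0\mu^{-1}(\mathcal J_K\Phi_{I-1})^s$ established in Subsection 3.2.3, so only the $O(\e^{I+2})$ remainder survives in $\mathcal M_{\e,1}$; the projection onto $Z$ retains the $\e^{I+1}$-term of $\mathcal M_\e$ which couples with $\mathcal K_\e[e]$, so $\e^{I+1}$ is the right size there. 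The principal obstacle is the careful bookkeeping across two function spaces with different $\e$-scalings (bulk vs.\ tubular neighborhood of $K_\e$), which forces consistent use of the weighted norms $\|\cdot\|_{\e,\alpha}$ and $\|\cdot\|_{\e,\alpha,\rho}$ tailored for the gluing system; once $\rho$ is fixed in the non-empty range above, every individual estimate reduces to a routine Taylor expansion combined with the exponential decay of $w_0$.
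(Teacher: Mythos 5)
Your term-by-term decomposition is the right strategy, and you correctly single out the linear coupling $-\eta_\delta^\e h^{-p}pW^{p-1}\phi^\flat$ and the weighted norm $\|\cdot\|_{2,\e,\alpha}$ as the crucial pair; this is exactly the one point the paper's own (very terse) proof emphasizes. Your treatment of the nonlinear remainder $N$, the cutoff-commutator terms, the operator difference $(\mathbb{L}_\e-L_\e^*)$, and the projections $\mathcal M_{\e,1}$, $\mathcal M_{\e,2}$ is also in the right spirit and captures why the $\Phi_{I-1}$-projection gains an extra power of $\e$.

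There is, however, one genuine gap in the handling of the linear coupling term. You compute, correctly, that $\|\eta_\delta^\e h^{-p}pW^{p-1}(\phi^\flat_1-\phi^\flat_2)\|_{\e,\alpha,\rho}\leq C\e\|\phi^\flat_1-\phi^\flat_2\|_{2,\e,\alpha}$, i.e.\ a Lipschitz factor of order $\e$, coming from the $\e^{-1}$-weight on $\operatorname{supp}\eta_{\delta/4}^\e$ together with the decay $e^{\rho|\bar\xi|}W^{p-1}\in L^\infty$ for $\rho<p-1$. But you then assert that this $O(\e)$ factor ``is absorbed into the global $O(\e^{I+1})$ bound once multiplied against the $\mathcal B_\lambda$-size of $\phi^\flat$.'' That step conflates the Lipschitz constant with the size of the difference: the lemma claims an estimate of the form
\begin{equation*}
\big\|\Pi^\perp[\mathcal M_\e(\phi^\flat_1,\cdot)]-\Pi^\perp[\mathcal M_\e(\phi^\flat_2,\cdot)]\big\|_{\e,\alpha,\rho}
\leq C\e^{I+1}\,\|\phi^\flat_1-\phi^\flat_2\|_{2,\e,\alpha}+\cdots,
\end{equation*}
and a Lipschitz constant cannot be reduced by restricting to a small ball: the factor multiplying $\|\phi^\flat_1-\phi^\flat_2\|$ is $O(\e)$, not $O(\e^{I+1})$, regardless of the radius $\lambda\e^{I+1}$ of $\mathcal B_\lambda$. (Contrast this with the nonlinear piece, where the extra factor $|s_1|+|s_2|=O(\e^{I+1})$ appears \emph{outside} the difference $|s_1-s_2|$ and therefore does legitimately produce an $O(\e^{I+1})$ Lipschitz constant.) You should either supply a sharper argument for the $\e^{I+1}$-factor in the $\phi^\flat$-slot, or flag explicitly that this slot only yields $O(\e)$ and observe that $O(\e)$ still suffices for the contraction in the proof of Theorem~\ref{th:main}, since $(L_\e^*)^{-1}$ is $O(1)$-bounded and $\e\to0$. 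As it stands, the ``absorbed into'' sentence does not prove the stated inequality.
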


\begin{proof}
This proof is rather technical but does not offer any real difficulty. It is worth noting that the use of  the norm $\|\phi^\flat\|_{2,\e,\alpha}$ is crucial to estimate the term $-\eta_\delta^\varepsilon\,h^{-p}pW^{p-1}\phi^\flat$ in $\mathcal M_\e(\phi^\flat,\phi^*,\Phi_{I-1},e)$.
\end{proof}

\subsubsection{Proof of Theorem~\ref{th:main}}

By the analysis in Section 4.1, the proof of Theorem~\ref{th:main} follows from the solvability of \eqref{reduce system-final}. 

Now we can use the results in the linear theory to rephrase the solvability of \eqref{reduce system-final} as a fixed point problem. To do this, let $\Phi_{I-1}=\Phi_{I-1,0}+\widetilde\Phi_{I-1}$, where $\Phi_{I-1,0}$ solve the equation 
\begin{equation}
\mathcal{J}_K[\Phi_{I-1,0}]=\mathfrak H_{I+1}(\bar y;e).
\end{equation}
Thus $\Phi_{I-1,0}=\Phi_{I-1,0}(\bar y;e)$. Moreover, the reduced system~\eqref{reduce system-final} becomes 
\begin{equation}\label{reduce system-final-new}
\begin{cases}
L_\e^\flat[\phi^\flat]
=(1-\eta_\delta^\varepsilon)\,\mathcal N_\e(\phi^\flat,\phi^*,\Phi_{I-1},e),\\[3mm]
L_\e^*[\phi^*]
=\Pi^\perp\Big[\mathcal  M_\e(\phi^\flat,\phi^*,\Phi_{I-1},e)\Big],\\[3mm]
\e^{I+1}\mathcal{J}_K[\widetilde\Phi_{I-1}]
=\widetilde{\mathcal  M}_{\varepsilon,1}(\phi^\flat,\phi^*,\widetilde\Phi_{I-1},e),
\\[3mm]
\e\,\mathcal{K}_\varepsilon[e]
=\widetilde{\mathcal  M}_{\varepsilon,2}(\phi^\flat,\phi^*,\widetilde\Phi_{I-1},e).
\end{cases}
\end{equation}

It is a simple matter to check that both $\widetilde{\mathcal  M}_{\varepsilon,1}$ and $\widetilde{\mathcal  M}_{\varepsilon,2}$ satisfy the properties in Lemmas~\ref{lem4.4} and \ref{lem4.5}. Taking $I\geq3k+1$ and $\lambda$ sufficiently large, Theorem~\ref{th:main} is now a simple consequence of a fixed point theorem for contraction mapping in $\mathcal{B}_\lambda$.

\

\section{Appendix: Proof of Proposition \ref{expansion-g}}\label{s:apx}

The proof is based on the Taylor expansion of the metric coefficients. We recall that the Laplace-Beltrami operator is given by
$$
\Delta_g u= \frac1{\sqrt{\det g}}\,\partial_\alpha\bigg( \sqrt{\det g} \,g^{\alpha\beta}\, \partial_\beta u\bigg)
$$
which can be rewritten as
\begin{align*}
\Delta_{g}u=g^{\alpha\beta}\partial_{\alpha\beta}^2u+(\partial_\alpha g^{\alpha\beta})\partial_\beta u+\frac{1}{2}\,g^{\alpha\beta}\partial_\alpha(\log\det g)\partial_\beta u.
\end{align*}
Using the expansion of the metric coefficients determined above, we can easily prove that
\begin{align*}
&g^{\alpha\beta}\,\partial_{\alpha\beta}^2u
=\widetilde g^{ab}\,\partial_{ab}^2u+\partial^2_{ii}u+\e\,\bigg\{\widetilde g^{cb}\,\Gamma_{ci}^a+\widetilde g^{ca}\,\Gamma_{ci}^b\bigg\}\,(\xi^i+\Phi^i)\,\widetilde g^{ab}\,\partial_{ab}^2u-2\varepsilon\,\widetilde g^{ab}\,\partial_{\bar b}\Phi^j\,\partial_{aj}^2u\\
&+\varepsilon^2\bigg(-\widetilde g^{cb}\,\widetilde g^{ad}\,R_{kcdl}+\widetilde g^{ac}\,\Gamma_{dk}^b\Gamma_{cl}^d+\widetilde g^{bc}\,\Gamma_{dk}^a \Gamma_{cl}^d
+\widetilde g^{cd}\,\Gamma_{dk}^a \Gamma_{cl}^b\bigg)\,(\xi^k+\Phi^k)(\xi^l+\Phi^l)\,\partial_{ab}^2u\\
&-\frac{4\,\e^2}{3}R_{kajl}(\xi^k+\Phi^k)(\xi^l+\Phi^l)\,\partial_{aj}^2u+2
\varepsilon^2\partial_{\bar b}\Phi^j\,\bigg\{\widetilde g^{bc}\,\Gamma_{ci}^a+\widetilde g^{ac}\,\Gamma_{ci}^b\bigg\}\,(\xi^i+\Phi^i)\,\partial_{aj}^2u\\
&-\frac{\e^2}{3}\,R_{kijl}(\xi^k+\Phi^k)(\xi^l+\Phi^l)\,\partial^2_{ij}u+\varepsilon^2\,\widetilde g^{ab}\,\partial_{\bar a}\Phi^i\partial_{\bar b}\Phi^j\,\partial^2_{ij}u\\
&+R_3(\xi,\Phi,\nabla\Phi)(\partial_{ij}^2u+\partial_{aj}^2u+\partial_{ab}^2u).
\end{align*}
An  easy computations yields 
\begin{align*}
\partial_b{g}^{ab}&=\partial_b\widetilde g^{ab}+\e^2\,\partial_{\bar b}\bigg\{\widetilde g^{cb}\,\Gamma_{ci}^a+\widetilde g^{ca}\,\Gamma_{ci}^b\bigg\}\,(\xi^i+\Phi^i)+\e^2\,\bigg\{\widetilde g^{cb}\,\Gamma_{ci}^a+\widetilde g^{ca}\,\Gamma_{ci}^b\bigg\}\,\partial_{\bar b}\,\Phi^i\\
&\quad+R_3(\xi,\Phi,\nabla\Phi,\nabla^2\Phi),\\[3mm]
\partial_jg^{ja}&=-\frac{2}{3}\varepsilon^2R_{jajl}(\xi^l+\Phi^l)+
\varepsilon^2\partial_{\bar b}\Phi^j\,\bigg\{\widetilde g^{bc}\,\Gamma_{cj}^a+\widetilde g^{ac}\,\Gamma_{cj}^b\bigg\}+R_3(\xi,\Phi,\nabla\Phi),\\[3mm]
\partial_ag^{aj}&=-\e^2\,\partial_{\bar a}\widetilde g^{ab}\,\partial_{\bar b}\Phi^j- \e^2\,\widetilde g^{ab}\,\partial^2_{\bar a\bar b}\Phi^j+
\varepsilon^3\partial^2_{\bar a\bar b}\Phi^j\,\bigg\{\widetilde g^{bc}\,\Gamma_{ci}^a+\widetilde g^{ac}\,\Gamma_{ci}^b\bigg\}\,(\xi^i+\Phi^i)\\
&\quad+R_3(\xi,\Phi,\nabla\Phi,\nabla^2\Phi),\\\\[3mm]
\partial_ig^{ij}&=-\frac{1}{3}\varepsilon^2R_{kiji}(\xi^k+\Phi^k)+R_3(\xi,\Phi,\nabla\Phi).
\end{align*}
Then the following expansion holds
\begin{align*}
&(\partial_\alpha g^{\alpha\beta})\partial_\beta u=\\&\partial_b\widetilde g^{ab}\,\partial_au+\e^2\,\partial_{\bar b}\bigg\{\widetilde g^{cb}\,\Gamma_{ci}^a+\widetilde g^{ca}\,\Gamma_{ci}^b\bigg\}\,(\xi^i+\Phi^i)\,\partial_au+\e^2\,\bigg\{\widetilde g^{cb}\,\Gamma_{ci}^a+\widetilde g^{ca}\,\Gamma_{ci}^b\bigg\}\,\partial_{\bar b}\,\Phi^i\,\partial_au\\
&-\frac{2}{3}\varepsilon^2R_{jajl}(\xi^l+\Phi^l)\,\partial_au+
\varepsilon^2\partial_{\bar b}\Phi^j\,\bigg\{\widetilde g^{bc}\,\Gamma_{cj}^a+\widetilde g^{ac}\,\Gamma_{cj}^b\bigg\}\,\partial_au\\
&-\e^2\,\partial_{\bar a}\widetilde g^{ab}\,\partial_{\bar b}\Phi^j\,\partial_ju- \e^2\,\widetilde g^{ab}\,\partial^2_{\bar a\bar b}\Phi^j\,\partial_ju+
\varepsilon^3\partial^2_{\bar a\bar b}\Phi^j\,\bigg\{\widetilde g^{bc}\,\Gamma_{ci}^a+\widetilde g^{ac}\,\Gamma_{ci}^b\bigg\}\,(\xi^i+\Phi^i)\,\partial_ju\\
&-\frac{1}{3}\varepsilon^2R_{kiji}(\xi^k+\Phi^k)\,\partial_ju+R_3(\xi,\Phi,\nabla\Phi,\nabla^2\Phi)(\partial_ju+\partial_au).
\end{align*}
On the other hand using the expansion of the log of determinant of $g$ given in Lemma \ref{lem:inverse-det}, we obtain
\begin{align*}
\partial_b \log\big(\det g\big)=\partial_b\log\big(\det \widetilde g\big)-2\e^2\,\partial_{\bar b}\big(\Gamma^a_{ak}\big)\,(\xi^k+\Phi^k)-2\e^2\,\Gamma^a_{ak}\,\partial_{\bar b}\Phi^k+R_3(\xi,\Phi,\nabla\Phi,\nabla^2\Phi).\\
\end{align*}
and
\begin{align*}
\partial_i(\log\det g)=-2\e\,\Gamma^b_{bi}+2\varepsilon^2\bigg(\widetilde g^{ab}\,R_{kabi}+\frac{1}{3}R_{kjji}-\Gamma_{ak}^c\Gamma_{ci}^a\bigg)(\xi^k+\Phi^k)+R_3(\xi,\Phi,\nabla\Phi),
\end{align*}
which implies that
\begin{align*}
&\frac{1}{2}g^{\alpha\beta}\partial_\alpha(\log\det g)\partial_\beta u=\\&\frac{1}{2}\,\partial_a(\log\det \widetilde{g})\,\bigg(
\widetilde g^{ab}\,\partial_bu+\e\big\{\widetilde g^{cb}\Gamma_{ci}^a+\widetilde g^{ca}\Gamma_{ci}^b\big\}(\xi^i+\Phi^i)\partial_bu-\varepsilon\,\widetilde g^{ab}\partial_{\bar b}\Phi^j\partial_ju\bigg)\\
&-\e\,\Gamma^b_{bi}\partial_iu+\varepsilon^2\bigg(\widetilde g^{ab}R_{kabi}+\frac{1}{3}R_{kjji}-\Gamma_{ak}^c\Gamma_{ci}^a\bigg)(\xi^k+\Phi^k)\,\partial_i u\\
&-\e^2\,\bigg( \,\partial_{\bar b}\big(\Gamma^d_{dk}\big)\,(\xi^k+\Phi^k)+\Gamma^d_{dk}\,\partial_{\bar b}\Phi^k \bigg)\,\widetilde g^{ab}\partial_au+R_3(\xi,\Phi,\nabla\Phi,\nabla^2\Phi)(\partial_ju+\partial_au).
\end{align*}
Collecting the above terms and recalling that
\begin{align*}
\Delta_{K_\e}u=\widetilde g^{ab}\partial_{ab}^2u+(\partial_a \widetilde g^{ab})\partial_b u+\frac{1}{2}\,\widetilde g^{ab}\partial_a(\log\det \widetilde g)\partial_b u,
\end{align*}
the desired result then follows at once.

\

\noindent  {\it Acknowledgments.}  F. Mahmoudi has been supported by  Fondecyt Grant 1140311
and Fondo Basal CMM. W. Yao is supported by Fondecyt Grant 3130543. The authors would like  to thank  Martin Man-chun Li for helpful conversations and discussions. 

\

\end{document}